\documentclass[11pt,a4paper,reqno]{article}
\usepackage[margin=1.8cm]{geometry}
\usepackage{amsthm,amsmath,amssymb,mathrsfs}

\usepackage{indentfirst,graphicx,subcaption}
\usepackage{booktabs,float}
\usepackage[hidelinks]{hyperref}
\numberwithin{equation}{section}
\theoremstyle{plain}

\newtheorem{theorem}{Theorem}[section]
\newtheorem{definition}{Definition}[section]
\newtheorem{lemma}{Lemma}[section]
\newtheorem{assumption}{Assumption}[section]
\newtheorem{proposition}{Proposition}[section]
\newtheorem{remark}{Remark}[section]
\newtheorem{corollary}{Corollary}[section]
\title{Topological properties and a multiplicative Bloch-Floquet-Zak transform for scattering by self-similar or fractal media}
\date{}
\author{
    Habib Ammari\thanks{Department of Mathematics, ETH Zürich, Rämistrasse 101, 8092 Zürich, Switzerland and Hong Kong Institute for Advanced Study, City University of Hong Kong, Kowloon Tong, Hong Kong Special Administrative Region. \texttt{habib.ammari@math.ethz.ch}.} 
    \and
    Yat Tin Chow \thanks{Department of Mathematics, University of California, Riverside, Riverside, CA, USA. \texttt{yattinc@ucr.edu}.}
    \and 
    Fuqun Han \thanks{Department of Mathematics, City University of Hong Kong, Kowloon Tong, Hong Kong Special Administrative Region. \texttt{fuqun.han@cityu.edu.hk}  (corresponding author).} 
}

\begin{document}

\maketitle
\begin{abstract}    
We develop the first rigorous topological framework for an auxiliary boundary-integral model motivated by physical scaling identities for wave scattering from self-similar or fractal media. The model is periodic in logarithmic scale, and a multiplicative Bloch-Floquet-Zak transform fiberizes its interscale coupling. For sufficiently small, well-separated components, equilibrium densities define a computable finite-dimensional projected matrix, while Riesz projections select the corresponding invariant spectral subspaces of the full boundary symbol. Under suitable spectral-isolation and point-gap conditions, and after recentering the two families at their respective same-scale reference values, we prove that the determinant winding of the exact Riesz-reduced family agrees with that of the projected matrix family. For regular-simplex configurations, we derive explicit nonzero winding formulas and track the resulting local winding data across finite prefractal levels and along a geometric sequence of wavenumbers. We also formulate conditional winding data for multiple dilation centers and show that the Zak phase of the chiral Hermitianization equals \(\pi\) times the point-gap winding modulo \(2\pi\). Thus, the scale-periodic boundary model admits a rigorous topological reduction.
\end{abstract}
\medskip
\noindent\textbf{Keywords.} Self-similar media, wave scattering, Bloch-Floquet theory, topological invariant.  

\section{Introduction}
\label{sec:introduction}

Fractal and self-similar structures arise in wave propagation, photonics, metamaterials, and material design. The self-similar structures considered here are characterized not merely by geometric irregularity but by a hierarchy of length scales generated by repeated dilation. Classical examples such as Cantor sets, Sierpiński gaskets, and carpets are described by recursive constructions and dilation symmetries. Mathematically, such structures are studied through Hausdorff dimension and upper and lower Minkowski dimensions, iterated function systems, and more recently fractal zeta functions and complex dimensions \cite{falconer2014fractal,lapidus_van_frankenhuijsen2013,lapidus_radunovic_zubrinic2017distance}. In the latter theory, the poles of Mellin-type zeta functions encode logarithmic oscillations of tube functions and thereby detect multiplicatively periodic oscillations in scale that are not captured by a single real-valued dimension \cite{lapidus_radunovic_zubrinic2017distance,lapidus_radunovic_zubrinic2018tube,lapidus_pearse2010,hoffer2025tube}.

In the auxiliary construction studied here, logarithmic scale plays the role of the spatial coordinate in a periodic medium. The multiplicative BFZ (Bloch-Floquet-Zak) transform introduces a dual scale parameter and organizes the interactions between successive scale layers. Our aim is to define and analyze determinant winding invariants for the resulting scale-periodic boundary-symbol model and to compare the exact invariant reduction selected by Riesz projections with a computable equilibrium-density projection related to the capacitance-matrix approach in subwavelength scattering.

Wave propagation and scattering involving fractal structures, measures, or boundaries have also been studied from several analytic and physical viewpoints. One-dimensional potential scattering with compactly supported Radon-measure potentials has been analyzed, including the recovery of fractal dimensions from the reflection amplitude \cite{guerin1996scattering}. In a different physical setting, multiple-scattering resonances and wave localization have been studied in multifractal arrays of electric dipoles \cite{chen2023enhanced}. A related mathematical literature concerns Laplacians on self-similar fractals and domains with fractal boundaries, including Weyl-Berry asymptotics and spectral zeta functions \cite{brossard1986can,kigami_lapidus1993,lapidus1991fractal, lapidus1994analysis,teplyaev2004spectral}. Wave propagation and inverse problems in domains with irregular or fractal boundaries provide another closely related perspective \cite{wave_fractal_Anna}. These works describe how fractal structure influences scattering amplitudes, localization, spectral asymptotics, and boundary-value problems; the present work instead studies topological winding invariants of a scale-periodic boundary-symbol model.

In parallel, topological wave phenomena have become a central theme in photonics, acoustics, and metamaterials. In periodic media, Floquet-Bloch theory converts translation symmetry into a fibered operator family over the Brillouin zone, and topological invariants of the fibers can predict robust edge or interface modes. For high-contrast subwavelength resonators, the wave problem can often be reduced asymptotically to a finite-dimensional generalized capacitance matrix, from whose Bloch eigenvalues and eigenmodes one computes Zak phases and related winding invariants \cite{cbms,ammari_davies_hiltunen_yu2020,ammari_davies_hiltunen2022,ammari_barandun_cao_feppon2023}. Recent work has also demonstrated topological effects in fractal or non-integer-dimensional structures, including photonic Floquet topological insulators on fractal lattices and topological edge or corner states in bismuth fractal nanostructures \cite{yang2020photonic,canyellas2024topological,fractal-photonic1,fractal-photonic2}. These developments motivate the question of whether self-similar scattering problems admit scale-adapted analogues of Bloch topological invariants.

The central novelty is that, in the auxiliary model, periodicity occurs in logarithmic scale rather than in physical space through the assembly of scatterers at a fixed scale. For a scale ratio \(\lambda>1\), successive layers are indexed by \(\{\lambda^m:m\in\mathbb Z\}\), and the multiplicative BFZ transform is the discrete analogue of Fourier analysis in the logarithmic radial variable. The resulting winding invariants are therefore attached to a scale-periodic boundary-symbol loop generated by dilation, rather than to an ordinary translation-periodic array or merely to a finite collection of coupled scatterers. Since dilation by a factor \(a\) changes the Helmholtz wavenumber from \(k\) to \(ak\), the fixed-wavenumber problem is not itself scale invariant. Exact scaling identities for the Green function, volume potential, and first Born far field nevertheless reveal a scale-frequency recursion and motivate, but do not themselves provide, the auxiliary scale-periodic model studied below. The small, well-separated components introduced later serve to make the reduced loop computable through equilibrium densities; they do not supply the periodic structure, which lies in the scale variable.

Accordingly, the topological analysis is carried out for an auxiliary smooth boundary model obtained by repeating a reference boundary over the discrete dilation group and pulling all scale layers back to one reference cell. The normalized interscale coupling is then a block Laurent operator whose coefficients are bounded for enlarged source layers and decay geometrically for contracted ones. On a suitable exponentially weighted sequence space, the BFZ transform fiberizes this operator into a $C^1$ loop of boundary operators. If a contour uniformly isolates a finite-rank spectral cluster and the resulting Riesz subspaces are identified with a fixed reference space, the invariant restrictions define a finite-dimensional reduced loop; a point gap of this loop then defines an integer determinant winding.

For sufficiently small, well-separated components, equilibrium densities and their normalized constant dual functionals, identified through the Calderón identity, define a computable finite-dimensional projected BFZ matrix. We identify the corresponding self-interaction Riesz space with the equilibrium-density space. On the self-interaction Riesz space, the fixed-space reduction approximates the exact Riesz-reduced family to quadratic order in the norm of the intercomponent and interscale perturbation. Combining this estimate with the comparison to the equilibrium-density projection, and recentering the exact and projected families at their respective same-scale (zeroth Fourier) reference values, we show that their determinant windings agree under the stated assumptions. For regular-simplex configurations of $M\in\{2,3,4\}$ components in $\mathbb R^3$, permutation symmetry decomposes the reduced space into the symmetric line and the $(M-1)$-dimensional sum-zero sector. Under the stated nonvanishing condition and in the regime of small components and sufficiently large scale ratio, the exact scalar loops associated with the two sectors each have winding number $-1$ about their respective same-scale reference values. The corresponding determinant windings of the full reduced matrix are $-1$ and $-(M-1)$ about the symmetric and sum-zero reference points, respectively.

Because determinant winding records eigenvalue motion around a point gap rather than eigenvector transport, we distinguish the determinant-line and bandwise biorthogonal Zak phases from the phase associated with chiral Hermitianization. The bandwise phase additionally requires a globally separated, periodically labelable simple eigenvalue branch. For a $C^1$ reduced loop, the chiral construction requires only a point gap, and we prove that the Zak phase of its negative spectral subspace equals $\pi$ times the determinant winding modulo $2\pi$. For finite prefractals, we record only local winding data obtained from uncoupled reference loops with the multiplicities of the iterative construction.  These data are not asserted to be determinant windings of the fully coupled finite-prefractal boundary operator.

The paper is organized as follows. Section~\ref{sec:selfsimilar_prelim} derives the physical scaling identities and formulates the auxiliary scale-periodic boundary model. Section~\ref{sec:high_freq_gap} constructs the weighted Laurent realization, performs the scale BFZ fiberization, and defines the determinant winding on isolated Riesz clusters. Section~\ref{sec:capacitance_riesz_phases} develops the equilibrium-density projection, proves its comparison with the exact Riesz reduction, computes the regular-simplex windings, and distinguishes the relevant geometric phases.  Section~\ref{sec:num} presents numerical experiments based on spatial discretizations and finite Laurent truncations of the BFZ symbol, records the associated finite-prefractal local winding data, and gives a separate physical far-field diagnostic over one logarithmic scale interval. The final section gives concluding remarks.

\section{Self-similar scattering and the auxiliary scale-periodic boundary model}
\label{sec:selfsimilar_prelim}

This section passes from the physical scaling identities to the auxiliary boundary model used in the BFZ analysis. We first work with the differential-equation and volume-integral formulations, where the Green-function scaling and the recursive far-field pattern are most transparent, and then formulate an exactly scale-periodic boundary model on one reference cell. This auxiliary model is motivated by physical identities but is not equivalent to the fixed-wavenumber penetrable-medium problem.

Throughout this section, fix \(d\ge2\). The self-similar structure is generated by a common contraction ratio
\[
a\in(0,1),\qquad \lambda:=a^{-1}>1.
\]
The parameter \(\lambda\) is the scale ratio that will later define the logarithmic scale cell and the dual variable.

\subsection{Self-similar geometry}

Let \(I=\{1,\dots,M\}\) be a finite index set and let
\[
T_i(x):=a x+b_i,\qquad b_i\in\mathbb R^d,\qquad i\in I.
\]
Let \(\Omega_0\subset\mathbb R^d\) be a bounded Lipschitz domain. We consider a measurable self-similar scatterer \(\Omega_\infty\) satisfying
\begin{equation}
\label{eq:Omega_infty_selfsimilar_prelim}
\Omega_\infty=\Omega_0\cup\bigcup_{i\in I}T_i(\Omega_\infty),
\end{equation}
where the union is disjoint up to sets of Lebesgue measure zero. We assume throughout this section that
\[
|\Omega_\infty|<\infty.
\]
Under the stated disjoint decomposition, Lemma~\ref{lem:OmegaN_decomp_prelim} shows that these hypotheses force \(M a^d<1\) and hence
\[
|\Omega_\infty|=\frac{|\Omega_0|}{1-Ma^d}.
\]

For a word \(\alpha=(\alpha_1,\dots,\alpha_n)\in I^n\), define
\[
T_\alpha:=T_{\alpha_1}\circ\cdots\circ T_{\alpha_n},\qquad |\alpha|:=n.
\]
We also write \(I^0:=\{\emptyset\}\) and \(T_\emptyset:=\mathrm{Id}\). The \(N\)-th level approximation to \(\Omega_\infty\) is
\begin{equation}
\label{eq:OmegaN_prelim}
\Omega_N:=\bigcup_{n=0}^{N}\ \bigcup_{\alpha\in I^n}T_\alpha(\Omega_0).
\end{equation}

The following lemma makes this finite-level decomposition precise and records the volume condition forced by disjoint self-similarity.

\begin{lemma}[Finite-level decomposition]
\label{lem:OmegaN_decomp_prelim}
Assume that \(\Omega_\infty\) satisfies \eqref{eq:Omega_infty_selfsimilar_prelim} up to sets of measure zero, with the union disjoint up to sets of measure zero. Assume also that \(|\Omega_0|>0\) and \(|\Omega_\infty|<\infty\). Then necessarily \(M a^d<1\).
Moreover, for every \(N\ge0\),
\[
\Omega_N=\bigsqcup_{n=0}^{N}\ \bigsqcup_{\alpha\in I^n}T_\alpha(\Omega_0)
\]
up to sets of measure zero, and
\[
\Omega_\infty=\bigcup_{N=0}^{\infty}\Omega_N=\bigsqcup_{n=0}^{\infty}\ \bigsqcup_{\alpha\in I^n}T_\alpha(\Omega_0)
\]
up to sets of measure zero.
\end{lemma}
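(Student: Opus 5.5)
The plan is to iterate the self-similarity relation \eqref{eq:Omega_infty_selfsimilar_prelim} and to use that each affine map \(T_\alpha\) scales Lebesgue measure by exactly \(a^{d|\alpha|}\). Since \(T_\alpha\) is an affine bijection, it maps null sets to null sets, preserves set operations, and satisfies \(|T_\alpha(E)|=a^{d|\alpha|}|E|\). First, taking measures in \eqref{eq:Omega_infty_selfsimilar_prelim} and using disjointness up to null sets gives
\[
|\Omega_\infty|=|\Omega_0|+\sum_{i\in I}|T_i(\Omega_\infty)|=|\Omega_0|+Ma^d|\Omega_\infty|.
\]
Because \(|\Omega_\infty|<\infty\), we may subtract to obtain \((1-Ma^d)|\Omega_\infty|=|\Omega_0|>0\). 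Since \(|\Omega_\infty|\ge0\), this forces \(1-Ma^d>0\) and yields \(|\Omega_\infty|=|\Omega_0|/(1-Ma^d)\). This settles the first claim and the volume formula stated before the lemma.

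Next, I will prove by induction on \(N\) the identity
\[
\Omega_\infty=\bigsqcup_{n=0}^{N}\bigsqcup_{\alpha\in I^n}T_\alpha(\Omega_0)\ \sqcup\ \bigsqcup_{\beta\in I^{N+1}}T_\beta(\Omega_\infty)
\]
up to null sets, with all pieces pairwise disjoint up to null sets. The case \(N=0\) is exactly the hypothesis. For the inductive step, substitute \eqref{eq:Omega_infty_selfsimilar_prelim} into each \(T_\beta(\Omega_\infty)\) and apply \(T_\beta\). This gives \(T_\beta(\Omega_\infty)=T_\beta(\Omega_0)\sqcup\bigsqcup_i T_{\beta i}(\Omega_\infty)\) up to null sets, with the disjointness preserved because \(T_\beta\) is injective and maps null sets to null sets. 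Since \(T_\beta(\Omega_0)\) and the sets \(T_{\beta i}(\Omega_\infty)\) all lie inside \(T_\beta(\Omega_\infty)\), disjointness from the other pieces at level \(N\) is inherited. Restricting to the first group of pieces gives the disjoint decomposition of \(\Omega_N\), and also shows \(\Omega_N\subset\Omega_\infty\) up to null sets.

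For the infinite union, the remainder \(R_N:=\bigcup_{\beta\in I^{N+1}}T_\beta(\Omega_\infty)\) has measure \((Ma^d)^{N+1}|\Omega_\infty|\). This tends to \(0\) because \(Ma^d<1\). Hence \(\Omega_\infty\setminus\bigcup_N\Omega_N\subset R_N\) for every \(N\), so this difference is null. Combining with \(\bigcup_N\Omega_N\subset\Omega_\infty\) gives the equality up to null sets. Pairwise disjointness of all \(T_\alpha(\Omega_0)\) across all levels follows because any two such pieces already appear together in the decomposition at some finite level \(N\).

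The main obstacle is bookkeeping for ``disjoint up to null sets'' under iteration. Throughout I will work in the measure algebra, meaning sets modulo null sets. There the affine bijections act as injective homomorphisms, so the substitution step is rigorous without tracking exceptional sets explicitly; only countably many null sets arise, so their union remains null.
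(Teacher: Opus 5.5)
Your proposal is correct and follows the same route as the paper. You take Lebesgue measure in the self-similar identity to force $Ma^d<1$, iterate the identity to get the level-$N$ decomposition with remainder $\bigsqcup_{\beta\in I^{N+1}}T_\beta(\Omega_\infty)$, and bound that remainder by $(Ma^d)^{N+1}|\Omega_\infty|\to0$; your explicit induction, inheritance of disjointness, and measure-algebra bookkeeping fill in details the paper compresses into ``iterating the self-similar identity.''
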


\begin{proof}
Taking the Lebesgue measure in the self-similar decomposition and using disjointness up to null sets gives
\[
|\Omega_\infty|=|\Omega_0|+\sum_{i\in I}|T_i(\Omega_\infty)|=|\Omega_0|+Ma^d|\Omega_\infty|.
\]
Since \(|\Omega_0|>0\) and \(|\Omega_\infty|<\infty\), it follows that \(Ma^d<1\).

Iterating the self-similar identity gives, for every \(N\ge0\),
\[
\Omega_\infty=\left(\bigsqcup_{n=0}^{N}\ \bigsqcup_{\alpha\in I^n}T_\alpha(\Omega_0)\right)\sqcup\left(\bigsqcup_{\alpha\in I^{N+1}}T_\alpha(\Omega_\infty)\right)
\]
up to sets of measure zero. Hence, the finite-level approximant satisfies
\[
\Omega_N=\bigsqcup_{n=0}^{N}\ \bigsqcup_{\alpha\in I^n}T_\alpha(\Omega_0)
\]
up to sets of measure zero.

It remains only to show that the infinite residual has measure zero. By the preceding decomposition,
\[
\Omega_\infty\setminus\Omega_N\subset \bigsqcup_{\alpha\in I^{N+1}}T_\alpha(\Omega_\infty)
\]
up to a null set. Therefore
\[
|\Omega_\infty\setminus\Omega_N|\le \sum_{\alpha\in I^{N+1}}|T_\alpha(\Omega_\infty)|=(Ma^d)^{N+1}|\Omega_\infty|.
\]
Since \(Ma^d<1\), the right-hand side tends to zero as \(N\to\infty\). Thus
\[
\left|\Omega_\infty\setminus\bigcup_{N=0}^{\infty}\Omega_N\right|=0.
\]
This proves the claimed infinite disjoint decomposition.
\end{proof}

By Lemma~\ref{lem:OmegaN_decomp_prelim}, \(\Omega_\infty\) agrees up to a null set with the union of its finite-level approximants. Accordingly, we henceforth work with the generated representative and retain the notation \(\Omega_\infty\) as 
\[
\Omega_\infty:=\bigcup_{N=0}^{\infty}\Omega_N.
\]
The next proposition shows that the generated representative \(\bigcup_{N=0}^\infty\Omega_N\) is indeed self-similar and that its closures converge in Hausdorff distance.

\begin{proposition}[Generated scatterer and Hausdorff convergence of its closures]
\label{prop:generated_scatterer_hausdorff_hull}
Consider \(\Omega_\infty:=\bigcup_{N=0}^{\infty}\Omega_N.\) Then \(\Omega_\infty\) is open and bounded and satisfies
\[
\Omega_\infty=\Omega_0\cup\bigcup_{i\in I}T_i(\Omega_\infty).
\]
Moreover, with
\[
K_N:=\overline{\Omega_N},\qquad K_\infty:=\overline{\Omega_\infty},
\]
one has
\[
K_\infty=\overline{\Omega_0}\cup\bigcup_{i\in I}T_i(K_\infty),
\qquad
d_H(K_N,K_\infty)\to0,
\]
where \(d_H\) denotes the Hausdorff distance.
\end{proposition}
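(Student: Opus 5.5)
The proof will treat each claim separately, working directly with the generated representative \(\Omega_\infty=\bigcup_{N}\Omega_N\) rather than with the measure-theoretic statement of Lemma~\ref{lem:OmegaN_decomp_prelim}.

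\textbf{Openness.} Each \(T_\alpha\) is an affine homeomorphism of \(\mathbb R^d\). Hence each \(T_\alpha(\Omega_0)\) is open, and so are \(\Omega_N\) and their union \(\Omega_\infty\).

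\textbf{Boundedness.} Let \(R\) be such that \(\overline{\Omega_0}\subset B(0,R)\), and set \(b:=\max_i|b_i|\). Put \(\rho:=\max\{R,\,b/(1-a)\}\). Then for \(|x|\le\rho\) and any \(i\),
\[
|T_ix|\le a\rho+b\le\rho .
\]
So every \(T_i\) maps \(\overline{B(0,\rho)}\) into itself, and by induction every \(T_\alpha(\Omega_0)\) lies in \(\overline{B(0,\rho)}\). Thus \(\Omega_\infty\) is bounded.

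\textbf{Self-similarity of \(\Omega_\infty\).} Since \(T_i\) commutes with unions, \(T_i(\Omega_\infty)=\bigcup_N T_i(\Omega_N)\). From the definition \eqref{eq:OmegaN_prelim} we have the set identity
\[
\Omega_{N+1}=\Omega_0\cup\bigcup_i T_i(\Omega_N),
\]
because every word of length \(n+1\ge1\) has the form \(i\alpha\) with \(|\alpha|=n\). Taking the union over \(N\) gives \(\Omega_\infty=\Omega_0\cup\bigcup_iT_i(\Omega_\infty)\) exactly, not merely up to null sets.

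\textbf{Self-similarity of \(K_\infty\).} Take closures in this identity. The union is finite, and closure commutes with finite unions. Since \(T_i\) is a homeomorphism, \(\overline{T_i(\Omega_\infty)}=T_i(K_\infty)\). This yields \(K_\infty=\overline{\Omega_0}\cup\bigcup_iT_i(K_\infty)\).

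\textbf{Hausdorff convergence.} This is the step needing a quantitative argument, and I will use the Hutchinson operator \(\Phi(K):=\overline{\Omega_0}\cup\bigcup_iT_i(K)\) on nonempty compact sets. Closures give \(K_{N+1}=\Phi(K_N)\), and the previous step gives \(K_\infty=\Phi(K_\infty)\). Standard estimates apply:
\[
d_H(A\cup B,\,A'\cup B')\le\max\{d_H(A,A'),d_H(B,B')\},\qquad d_H(T_iA,T_iB)=a\,d_H(A,B).
\]
Together with the fact that the common component \(\overline{\Omega_0}\) contributes zero, they show \(\Phi\) is an \(a\)-contraction in \(d_H\). Therefore
\[
d_H(K_N,K_\infty)\le a^N d_H(K_0,K_\infty)\to0 .
\]
Here \(d_H(K_0,K_\infty)\) is finite because both sets lie in \(\overline{B(0,\rho)}\). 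An alternative is to argue from monotonicity: \(K_N\subset K_\infty\), and every point of \(\Omega_\infty\) lies in some \(\Omega_N\). This gives only pointwise approximation, though, so the contraction argument is the cleaner route. The only subtlety is verifying the contraction inequality for the union with a fixed set, which is immediate from the definition of \(d_H\) via \(\varepsilon\)-neighbourhoods.
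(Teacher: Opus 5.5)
Your proof is correct, and it takes a more direct route than the paper's.

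\textbf{What the paper does.} It uses the same operator $F(K)=K_0\cup\bigcup_i T_i(K)$ on an invariant ball and the same $a$-contraction estimate. It then invokes Banach's fixed-point theorem on the complete space $\mathcal K_R$ to produce an abstract fixed point $K_\ast$, and shows $F^N(K_0)=K_N\to K_\ast$. Only afterwards does it identify $K_\ast=\overline{\Omega_\infty}$, by a two-sided inclusion: monotonicity gives $K_N\subset K_\ast$, and Hausdorff convergence gives the reverse inclusion. Boundedness of $\Omega_\infty$ and the closed self-similarity of $K_\infty$ are then read off from this identification.

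\textbf{What you do instead.} You get boundedness directly from the invariant ball. You get $K_\infty=\Phi(K_\infty)$ simply by taking closures in the exact set identity, since closure commutes with finite unions and with the homeomorphisms $T_i$. You then use the contraction only as an inequality,
\[
d_H(K_N,K_\infty)=d_H\bigl(\Phi^N(K_0),\Phi^N(K_\infty)\bigr)\le a^N d_H(K_0,K_\infty).
\]

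\textbf{Comparison.} Your route avoids completeness of the hyperspace and the identification step, and it gives an explicit geometric rate. The paper's packaging additionally exhibits $K_\infty$ as the unique compact solution of the closed self-similarity equation in $\overline{B_R(0)}$. That uniqueness also follows from your contraction in one line.

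\textbf{Minor remark.} The monotone alternative you set aside is not limited to pointwise approximation. The sets $K_N\subset K_\infty$ increase to a dense subset of the compact set $K_\infty$, so a finite $\varepsilon$-net of $K_\infty$ already gives $d_H(K_N,K_\infty)\le 2\varepsilon$ for large $N$. It just gives no rate.
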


\begin{proof}
The set \(\Omega_\infty\) is open because it is a union of open sets. Moreover,
\[
\Omega_0\cup\bigcup_{i\in I}T_i(\Omega_\infty)=\Omega_0\cup\bigcup_{i\in I}\bigcup_{n=0}^{\infty}\ \bigcup_{\alpha\in I^n}T_iT_\alpha(\Omega_0).
\]
Since \(T_iT_\alpha=T_{(i,\alpha)}\), the right-hand side is
\[
\Omega_0\cup\bigcup_{n=1}^{\infty}\ \bigcup_{\alpha\in I^n}T_\alpha(\Omega_0)=\bigcup_{n=0}^{\infty}\ \bigcup_{\alpha\in I^n}T_\alpha(\Omega_0)=\Omega_\infty.
\]

It remains to prove compactness and Hausdorff convergence. Set \(K_0:=\overline{\Omega_0}\). Choose \(R>0\) so large that
\[
K_0\subset\overline{B_R(0)},
\qquad
\max_{i\in I}|b_i|\le(1-a)R.
\]
Then \(T_i(\overline{B_R(0)})\subset\overline{B_R(0)}\) for \(i\in I\). Let \(\mathcal K_R\) be the space of nonempty compact subsets of \(\overline{B_R(0)}\), equipped with the Hausdorff distance. Define
\[
F(K):=K_0\cup\bigcup_{i\in I}T_i(K),\qquad K\in\mathcal K_R.
\]
Then \(F:\mathcal K_R\to\mathcal K_R\). For \(K,L\in\mathcal K_R\), set
\[
\delta:=\max_{i\in I}d_H(T_i(K),T_i(L)).
\]
If \(x\in F(K)\), then either \(x\in K_0\subset F(L)\) or \(x\in T_i(K)\) for some \(i\in I\). Hence,
\[
\operatorname{dist}(x,F(L))\le\delta.
\]
Taking the supremum over \(x\in F(K)\), and then repeating the same argument with \(K\) and \(L\) interchanged, gives
\[
d_H(F(K),F(L))\le\delta=\max_{i\in I}d_H(T_i(K),T_i(L))=a\,d_H(K,L),
\]
where the last equality uses the fact that each \(T_i\) is a similarity of ratio \(a\).

Thus, \(F\) is a contraction on the complete metric space \(\mathcal K_R\). By Banach's fixed-point theorem, there exists a unique \(K_\ast\in\mathcal K_R\) such that
\[
K_\ast=K_0\cup\bigcup_{i\in I}T_i(K_\ast).
\]

Indeed, applying \(F\) once adds the zeroth-generation set \(K_0\) and sends \(\alpha\in I^n\) to \((i,\alpha)\in I^{n+1}\), since \(T_iT_\alpha=T_{(i,\alpha)}\). Hence, by induction
\[
F^N(K_0)=\bigcup_{n=0}^{N}\ \bigcup_{\alpha\in I^n}T_\alpha(K_0).
\]
Since \(T_\alpha(K_0)=\overline{T_\alpha(\Omega_0)}\) and the union is finite,
\[
F^N(K_0)=\overline{\bigcup_{n=0}^{N}\ \bigcup_{\alpha\in I^n}T_\alpha(\Omega_0)}=\overline{\Omega_N}=K_N.
\]
The contraction theorem therefore gives \(d_H(K_N,K_\ast)\to0\).

 Next, we identify \(K_\ast\) with \(\overline{\Omega_\infty}\). First, \(K_\ast=F(K_\ast)\) implies \(K_0\subset K_\ast\). If \(F^N(K_0)\subset K_\ast\), then monotonicity gives
\[
F^{N+1}(K_0)=F(F^N(K_0))\subset F(K_\ast)=K_\ast.
\]
Therefore,
\[
K_N=F^N(K_0)\subset K_\ast,\qquad N\ge0.
\]
Hence, \(\overline{\bigcup_{N=0}^{\infty}K_N}\subset K_\ast\).

Conversely, if \(x\in K_\ast\), then
\[
\operatorname{dist}(x,K_N)\le d_H(K_N,K_\ast)\to0.
\]
Thus \(x\in\overline{\bigcup_{N=0}^{\infty}K_N}\), and therefore,
\[
K_\ast=\overline{\bigcup_{N=0}^{\infty}K_N}.
\]
Since \(K_N=\overline{\Omega_N}\), we also have
\[
\overline{\bigcup_{N=0}^{\infty}K_N}=\overline{\bigcup_{N=0}^{\infty}\Omega_N}=\overline{\Omega_\infty}.
\]
Thus \(K_\infty=K_\ast\).
In particular, \(K_\infty\) is compact, \(\Omega_\infty\) is bounded, and
\[
K_\infty=\overline{\Omega_0}\cup\bigcup_{i\in I}T_i(K_\infty),\qquad d_H(K_N,K_\infty)\to0.
\]
\end{proof}

\subsection{Green-function scaling and the volume potential}

We next record the scaling law of the Helmholtz Green function and the resulting covariance of the volume potential.

Let \(m\in L^\infty(\Omega_\infty)\) be the bounded contrast coefficient, extended by zero outside \(\Omega_\infty\) when it appears in integrals over \(\mathbb R^d\). We assume that \(m\) is compatible with the self-similar geometry in the following intrinsic sense: 
\begin{equation} 
\label{eq:m_selfsimilar_prelim} m(T_i x)=m(x)\qquad\text{for a.e. }x\in\Omega_\infty,\quad i\in I.
\end{equation} 
Since \(T_i(\Omega_\infty)\subset\Omega_\infty\), this is the only invariance needed in the following. All identities involving the change of variables \(z=T_i y\) are understood with \(y\) that ranges over subsets of \(\Omega_\infty\).

Let \(G_k\) denote the outgoing Helmholtz Green function, normalized by
\[
(-\Delta-k^2)G_k(\cdot,y)=\delta_y.
\]
Thus,
\begin{equation}
\label{eq:Gk_prelim}
G_k(x,y)=\frac{C_d k^{d-2}i}{(k|x-y|)^{(d-2)/2}}H_{(d-2)/2}^{(1)}(k|x-y|),
\end{equation}
where \(C_d\) is a dimensional constant depending on the chosen normalization.

For every bounded measurable set \(D\subset\mathbb R^d\) and \(\varphi\in L^\infty(D)\), define the volume potential
\begin{equation}
\label{eq:VD_prelim}
(V_D^k\varphi)(x):=k^2\int_D G_k(x,y)m(y)\varphi(y)\,dy.
\end{equation}

The following Green-function identity is the basic scaling relation behind the wavenumber rescaling.

\begin{lemma}[Scaling of the Green function]
\label{lem:G_scaling_prelim}
For every \(x,y\in\mathbb R^d\) away from the corresponding Green-function singularity, every \(i,j\in I\), and every \(k>0\),
\begin{equation}
\label{eq:G_scaling_prelim}
G_k(ax+b_i,ay+b_j)=a^{2-d}G_{ak}\bigl(x,y+a^{-1}(b_j-b_i)\bigr).
\end{equation}
In particular,
\begin{equation}
\label{eq:G_scaling_samecell_prelim}
G_k(ax+b_i,ay+b_i)=a^{2-d}G_{ak}(x,y).
\end{equation}
\end{lemma}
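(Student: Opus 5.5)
The identity follows directly from the explicit formula \eqref{eq:Gk_prelim}, since \(G_k(x,y)\) depends only on \(k\) and \(r=|x-y|\). I will write \(G_k(x,y)=g_k(|x-y|)\) with
\[
g_k(r)=C_d\,i\,k^{d-2}(kr)^{-(d-2)/2}H^{(1)}_{(d-2)/2}(kr).
\]
The one structural fact needed is that \(g_k\) satisfies the homogeneity relation
\[
g_k(ar)=a^{2-d}g_{ak}(r).
\]
To see this, note that \(kar=(ak)r\). Hence the factor \((kar)^{-(d-2)/2}H^{(1)}_{(d-2)/2}(kar)\) coincides with the corresponding factor in \(g_{ak}(r)\). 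The prefactors compare as \(k^{d-2}=a^{2-d}(ak)^{d-2}\), which gives the claim.

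Next I will compute the argument. We have \((ax+b_i)-(ay+b_j)=a\bigl(x-(y+a^{-1}(b_j-b_i))\bigr)\), so
\[
|(ax+b_i)-(ay+b_j)|=a\,|x-y'|,\qquad y':=y+a^{-1}(b_j-b_i),
\]
using \(a>0\). Therefore
\[
G_k(ax+b_i,ay+b_j)=g_k(a|x-y'|)=a^{2-d}g_{ak}(|x-y'|)=a^{2-d}G_{ak}(x,y').
\]
This is \eqref{eq:G_scaling_prelim}. Taking \(i=j\) gives \(y'=y\), which is \eqref{eq:G_scaling_samecell_prelim}. The singularity condition is consistent on both sides, because \(ax+b_i=ay+b_j\) holds exactly when \(x=y'\).

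There is no real obstacle. The only point needing care is that \(C_d\) depends only on \(d\) and not on \(k\), which is what the normalization in \eqref{eq:Gk_prelim} says.

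As an independent cross-check that avoids the Bessel formula, I would use the distributional characterization. Set \(u(x):=G_k(ax+b_i,ay+b_j)\). By the chain rule, \((-\Delta_x-(ak)^2)u=a^2\bigl[(-\Delta-k^2)G_k(\cdot,ay+b_j)\bigr](ax+b_i)\). This equals \(a^2\delta(ax-ay')=a^{2-d}\delta_{y'}\). Dilation also preserves the outgoing radiation condition, since the phase \(e^{ik|ax|}=e^{i(ak)|x|}\). Uniqueness of the outgoing fundamental solution then gives \(u=a^{2-d}G_{ak}(\cdot,y')\).
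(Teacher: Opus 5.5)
Your proposal is correct and follows essentially the paper's route: write $|(ax+b_i)-(ay+b_j)|=a\,|x-y'|$ with $y'=y+a^{-1}(b_j-b_i)$ and substitute into the explicit Hankel formula \eqref{eq:Gk_prelim}. You make the homogeneity $g_k(ar)=a^{2-d}g_{ak}(r)$ explicit, which the paper leaves implicit; your distributional cross-check via uniqueness of the outgoing fundamental solution is a valid extra.
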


\begin{proof}
We have
\[
ax+b_i-(ay+b_j)=a\bigl(x-y-a^{-1}(b_j-b_i)\bigr).
\]
Hence
\[
|ax+b_i-(ay+b_j)|=a\left|x-\bigl(y+a^{-1}(b_j-b_i)\bigr)\right|.
\]
Substituting this identity into the explicit expression \eqref{eq:Gk_prelim} gives \eqref{eq:G_scaling_prelim}. The special case \(i=j\) gives \eqref{eq:G_scaling_samecell_prelim}.
\end{proof}

For each \(i\in I\), define the pullforward of functions by
\[
(U_i\varphi)(x):=\varphi(T_i^{-1}x),\qquad x\in T_i(D).
\]

The next lemma shows that restriction to one self-similar copy is equivalent, after pullback, to rescaling the wavenumber.

\begin{lemma}[Covariance on one self-similar copy] 
\label{lem:volume_covariance_prelim} 
For every bounded measurable set \(D\subset\Omega_\infty\), 
\begin{equation} \label{eq:volume_covariance_prelim} U_i^{-1}V_{T_i(D)}^kU_i=V_D^{ak},\qquad i\in I. 
\end{equation} 
\end{lemma}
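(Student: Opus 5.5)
We verify \eqref{eq:volume_covariance_prelim} directly by applying both sides to an arbitrary \(\varphi\in L^\infty(D)\) and evaluating at a point \(x\in D\). The plan is to unwind the conjugation, change variables \(z=T_i y\) in the volume integral, and then use the same-cell Green-function scaling \eqref{eq:G_scaling_samecell_prelim} from Lemma~\ref{lem:G_scaling_prelim} together with the invariance \eqref{eq:m_selfsimilar_prelim} of the contrast.

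First, set \(\psi:=U_i\varphi\), a function on \(T_i(D)\) with \(\psi(z)=\varphi(T_i^{-1}z)\). Since \(D\subset\Omega_\infty\) and \(T_i(\Omega_\infty)\subset\Omega_\infty\) (Proposition~\ref{prop:generated_scatterer_hausdorff_hull}), \(T_i(D)\) is a bounded measurable subset of \(\Omega_\infty\), so \(V^k_{T_i(D)}\psi\) is defined by \eqref{eq:VD_prelim}. Since \(U_i^{-1}\) is composition with \(T_i\), we have
\[
(U_i^{-1}V^k_{T_i(D)}U_i\varphi)(x)=k^2\int_{T_i(D)}G_k(T_ix,z)\,m(z)\,\varphi(T_i^{-1}z)\,dz .
\]

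Next, substitute \(z=T_iy=ay+b_i\) with \(y\in D\). This gives \(dz=a^d\,dy\), and the integral becomes
\[
k^2a^d\int_D G_k(ax+b_i,ay+b_i)\,m(T_iy)\,\varphi(y)\,dy .
\]
By \eqref{eq:G_scaling_samecell_prelim}, \(G_k(ax+b_i,ay+b_i)=a^{2-d}G_{ak}(x,y)\). By \eqref{eq:m_selfsimilar_prelim}, \(m(T_iy)=m(y)\) for a.e.\ \(y\in D\subset\Omega_\infty\). The prefactor therefore reduces to \(k^2a^d a^{2-d}=(ak)^2\), and the expression equals
\[
(ak)^2\int_D G_{ak}(x,y)\,m(y)\,\varphi(y)\,dy=(V_D^{ak}\varphi)(x).
\]

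The computation itself is routine. The only point needing care is the legitimacy of the manipulation. The Green function is singular on the diagonal, but the singularity is locally integrable: it behaves like \(|x-y|^{2-d}\), or logarithmically when \(d=2\). Both integrals therefore converge absolutely as Lebesgue integrals for bounded \(m\varphi\) on bounded sets, and the change of variables theorem applies without restriction. The singular set \(\{z=T_ix\}\) corresponds exactly to \(\{y=x\}\), which is consistent with the hypothesis of Lemma~\ref{lem:G_scaling_prelim} that the identity holds away from the singularity, i.e.\ off a null set. Finally, the a.e.\ invariance of \(m\) only needs to hold on \(D\), which is guaranteed because \(D\subset\Omega_\infty\).
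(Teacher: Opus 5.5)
Your proposal is correct and follows essentially the same route as the paper's proof: you unwind the conjugation, change variables via $z=ay+b_i$, apply the same-cell Green scaling \eqref{eq:G_scaling_samecell_prelim} and the contrast invariance \eqref{eq:m_selfsimilar_prelim}, and simplify $k^2a^da^{2-d}=(ak)^2$. Your added remarks on the local integrability of the diagonal singularity and on $T_i(D)\subset\Omega_\infty$ are sound justifications that the paper leaves implicit.
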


\begin{proof}
Let \(\varphi\in L^\infty(D)\). For \(x\in D\), using the change of variables \(z=ay+b_i\), the contrast invariance \eqref{eq:m_selfsimilar_prelim}, and the same-copy Green scaling \eqref{eq:G_scaling_samecell_prelim}, we obtain
\begin{align*}
&(U_i^{-1}V_{T_i(D)}^kU_i\varphi)(x)=(V_{T_i(D)}^kU_i\varphi)(ax+b_i)=k^2\int_{T_i(D)}G_k(ax+b_i,z)m(z)\varphi(T_i^{-1}z)\,dz\\
=&k^2a^d\int_D G_k(ax+b_i,ay+b_i)m(ay+b_i)\varphi(y)\,dy=k^2a^d a^{2-d}\int_D G_{ak}(x,y)m(y)\varphi(y)\,dy\\
=&(ak)^2\int_D G_{ak}(x,y)m(y)\varphi(y)\,dy=(V_D^{ak}\varphi)(x).
\end{align*}
\end{proof}

For \(\varphi\in L^\infty(\Omega_\infty)\), interpret each term as acting on the restriction of \(\varphi\) to its integration domain. Then the volume potential decomposes pointwise as
\begin{equation}
\label{eq:volume_recursive_prelim}
V_{\Omega_\infty}^k=V_{\Omega_0}^k+\sum_{i\in I}V_{T_i(\Omega_\infty)}^k,
\end{equation}
and each copy term is conjugate to the full potential at the rescaled wavenumber:
\begin{equation}
\label{eq:volume_recursive_copy_prelim}
U_i^{-1}V_{T_i(\Omega_\infty)}^kU_i=V_{\Omega_\infty}^{ak}.
\end{equation}

Equations \eqref{eq:volume_recursive_prelim}-\eqref{eq:volume_recursive_copy_prelim} exhibit the scale recursion that will be organized by the BFZ transform in the logarithmic scale variable.
 
\subsection{The scattering problem and the far-field pattern}

Let \(u^{\mathrm{in}}\) solve
\[
(-\Delta-k^2)u^{\mathrm{in}}=0\qquad\text{in }\mathbb R^d.
\]
Assume that the scattering problem
\begin{equation}
\label{eq:helmholtz_prelim}
\bigl(-\Delta-k^2(1+m\chi_{\Omega_\infty})\bigr)u^k=0\qquad\text{in }\mathbb R^d,
\end{equation}
admits a total field \(u^k\). Set
\[
u^{\mathrm{s},k}:=u^k-u^{\mathrm{in}}.
\]
Assume that \(u^{\mathrm{s},k}\) satisfies the Sommerfeld radiation condition. At the volume-potential level, the total field then satisfies the Lippmann-Schwinger equation
\begin{equation}
\label{eq:LS_prelim}
u^k-V_{\Omega_\infty}^ku^k=u^{\mathrm{in}}.
\end{equation}

For an incident plane wave
\[
u^{\mathrm{in}}(x)=e^{ik\mathbf d\cdot x},\qquad \mathbf d\in\mathbb S^{d-1},
\]
the scattered field has the far-field expansion
\begin{equation}
\label{eq:farfield_asymptotic_prelim}
u^{\mathrm{s},k}(x;\mathbf d)=\gamma_d k^{(d-3)/2}\frac{e^{ik|x|}}{|x|^{(d-1)/2}}A_{\Omega_\infty}^{\infty}(\widehat x,\mathbf d,k)+\mathcal O(|x|^{-(d+1)/2}),\qquad |x|\to\infty,
\end{equation}
where \(\gamma_d\neq0\) is a dimensional constant. The factor \(k^{(d-3)/2}\), which is equal to one in dimension three, follows from the large-argument asymptotics of the Hankel function in \eqref{eq:Gk_prelim}. With the same normalization of the Green function as above,
\begin{equation}
\label{eq:farfield_formula_prelim}
A_{\Omega_\infty}^{\infty}(\widehat x,\mathbf d,k)=c_dk^2\int_{\Omega_\infty}e^{-ik\widehat x\cdot y}m(y)u^k(y;\mathbf d)\,dy.
\end{equation}

The next proposition separates the far-field pattern into scale contributions.

\begin{proposition}[Exact series expansion of the far-field pattern]
\label{prop:farfield_series_prelim}
Assume, in addition, that
\[
u^k(\cdot;\mathbf d)\in L^\infty(\Omega_\infty).
\]
Then the far-field pattern admits the absolutely convergent representation
\begin{equation}
\label{eq:farfield_series_prelim}
A_{\Omega_\infty}^{\infty}(\widehat x,\mathbf d,k)=c_dk^2\sum_{n=0}^{\infty}\ \sum_{\alpha\in I^n}\int_{T_\alpha(\Omega_0)}e^{-ik\widehat x\cdot y}m(y)u^k(y;\mathbf d)\,dy.
\end{equation}
\end{proposition}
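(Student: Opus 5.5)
The plan is to combine the finite-level disjoint decomposition of Lemma~\ref{lem:OmegaN_decomp_prelim} with dominated convergence, using that the integrand is bounded and \(\Omega_\infty\) has finite measure. We start from the integral formula \eqref{eq:farfield_formula_prelim} and set
\[
f(y):=e^{-ik\widehat x\cdot y}m(y)u^k(y;\mathbf d),\qquad y\in\Omega_\infty .
\]
Since \(m\in L^\infty(\Omega_\infty)\), \(u^k(\cdot;\mathbf d)\in L^\infty(\Omega_\infty)\) by hypothesis, and \(|e^{-ik\widehat x\cdot y}|=1\), we have \(|f|\le \|m\|_{L^\infty}\|u^k\|_{L^\infty}\) almost everywhere. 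Because \(|\Omega_\infty|<\infty\), it follows that \(f\in L^1(\Omega_\infty)\).

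Next, Lemma~\ref{lem:OmegaN_decomp_prelim} represents \(\Omega_\infty\) up to a null set as the countable disjoint union of the pieces \(T_\alpha(\Omega_0)\) over \(\alpha\in I^n\) and \(n\ge0\). The lemma is stated for the self-similar set satisfying \eqref{eq:Omega_infty_selfsimilar_prelim}; the generated representative \(\bigcup_N\Omega_N\) differs from it only by a null set, so the integral is unchanged. Countable additivity of the integral of an \(L^1\) function over a disjoint union then gives
\[
\int_{\Omega_\infty} f=\sum_{n=0}^{\infty}\sum_{\alpha\in I^n}\int_{T_\alpha(\Omega_0)} f .
\]
To see this concretely, note that \(\chi_{\Omega_N}f\to\chi_{\Omega_\infty}f\) almost everywhere and is dominated by \(|f|\). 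By dominated convergence, \(\int_{\Omega_N}f\to\int_{\Omega_\infty}f\). Finite disjointness up to null sets identifies \(\int_{\Omega_N}f\) with the \(N\)-th partial sum, so the series converges to \(\int_{\Omega_\infty}f\). Multiplying by \(c_dk^2\) yields \eqref{eq:farfield_series_prelim}.

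For \emph{absolute} convergence, we bound each term by
\[
\Bigl|\int_{T_\alpha(\Omega_0)}f\Bigr|\le\|f\|_{L^\infty}|T_\alpha(\Omega_0)|=\|f\|_{L^\infty}a^{nd}|\Omega_0| .
\]
Summing over the \(M^n\) words of length \(n\) and then over \(n\) gives
\[
\|f\|_{L^\infty}|\Omega_0|\sum_{n}(Ma^d)^n<\infty ,
\]
since \(Ma^d<1\) by Lemma~\ref{lem:OmegaN_decomp_prelim}. Alternatively, the same bound follows from \(\sum_\alpha\int_{T_\alpha(\Omega_0)}|f|=\int_{\Omega_\infty}|f|\).

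There is no real obstacle here. The only points needing care are the measure-zero bookkeeping, namely that overlaps between pieces and the residual \(\Omega_\infty\setminus\bigcup_N\Omega_N\) are null and so do not affect the integrals, and the fact that the assumption \(u^k\in L^\infty\) is exactly what supplies the dominating function.
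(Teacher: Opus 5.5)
Your proposal is correct and follows essentially the same route as the paper: insert the disjoint decomposition of Lemma~\ref{lem:OmegaN_decomp_prelim} into \eqref{eq:farfield_formula_prelim}, then bound the series by $\|m\|_{L^\infty}\|u^k\|_{L^\infty}\sum_{n,\alpha}|T_\alpha(\Omega_0)|=\|m\|_{L^\infty}\|u^k\|_{L^\infty}|\Omega_\infty|$. Your dominated-convergence argument for the partial sums $\int_{\Omega_N}f$ makes explicit the term-by-term decomposition that the paper justifies from absolute convergence alone.
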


\begin{proof}
By Lemma~\ref{lem:OmegaN_decomp_prelim}, the family \(\{ T_\alpha(\Omega_0): \alpha\in I^n, n\geq 0\}\) decomposes \(\Omega_\infty\) up to a set of measure zero. Inserting this decomposition into \eqref{eq:farfield_formula_prelim} gives \eqref{eq:farfield_series_prelim} formally. Moreover,
\begin{align*}
\sum_{n=0}^{\infty}\sum_{\alpha\in I^n}\left|\int_{T_\alpha(\Omega_0)}e^{-ik\widehat x\cdot y}m(y)u^k(y;\mathbf d)\,dy\right|&\le \|m\|_{L^\infty(\Omega_\infty)}\|u^k(\cdot;\mathbf d)\|_{L^\infty(\Omega_\infty)}\sum_{n=0}^{\infty}\sum_{\alpha\in I^n}|T_\alpha(\Omega_0)|\\
&=\|m\|_{L^\infty(\Omega_\infty)}\|u^k(\cdot;\mathbf d)\|_{L^\infty(\Omega_\infty)}|\Omega_\infty|<\infty.
\end{align*}
Thus, the series is absolutely convergent, and the far-field integral may be decomposed term by term.
\end{proof}

The absolute convergence in Proposition~\ref{prop:farfield_series_prelim} uses only \(m\in L^\infty(\Omega_\infty)\), \(u^k(\cdot;\mathbf d)\in L^\infty(\Omega_\infty)\), and \(|\Omega_\infty|<\infty\). No smoothness of the limiting self-similar boundary is needed at this volume-potential level.

To obtain an explicit closed scale recursion, it is useful to pass to the Born approximation, in which the total field inside the scatterer is replaced by the incident field.

\begin{definition}[First Born approximation of the far-field pattern]
\label{def:born_farfield_prelim}
For a bounded measurable set \(\Omega\), define
\begin{equation}
\label{eq:born_farfield_prelim}
A_{\Omega}^{\infty,\mathrm B}(\widehat x,\mathbf d,k):=c_dk^2\int_{\Omega}e^{ik(\mathbf d-\widehat x)\cdot y}m(y)\,dy.
\end{equation}
\end{definition}

The next proposition gives the resulting scale recursion.

\begin{proposition}[Recursive identity for the first Born far-field approximation]
\label{prop:born_recursive_prelim}
The Born far-field pattern of the self-similar scatterer satisfies
\begin{equation}
\label{eq:born_recursive_prelim}
A_{\Omega_\infty}^{\infty,\mathrm B}(\widehat x,\mathbf d,k)=A_{\Omega_0}^{\infty,\mathrm B}(\widehat x,\mathbf d,k)+a^{d-2}\sum_{i\in I}e^{ik(\mathbf d-\widehat x)\cdot b_i}A_{\Omega_\infty}^{\infty,\mathrm B}(\widehat x,\mathbf d,ak).
\end{equation}
\end{proposition}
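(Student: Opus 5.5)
The plan is to combine the disjoint self-similar decomposition with a change of variables on each copy, as in the proof of Lemma~\ref{lem:volume_covariance_prelim}, but now applied to the Born integrand instead of the Green kernel. The integral in Definition~\ref{def:born_farfield_prelim} is finite for every bounded measurable set, since $|e^{ik(\mathbf d-\widehat x)\cdot y}m(y)|\le\|m\|_{L^\infty}$. Proposition~\ref{prop:generated_scatterer_hausdorff_hull} shows that $\Omega_\infty$ is bounded, so all integrals below are absolutely convergent.

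First, I will use the self-similar identity \eqref{eq:Omega_infty_selfsimilar_prelim}. The union is disjoint up to null sets; for the generated representative this follows from the disjointness assumed in Lemma~\ref{lem:OmegaN_decomp_prelim}. Additivity of the integral then gives
\[
A_{\Omega_\infty}^{\infty,\mathrm B}(\widehat x,\mathbf d,k)=A_{\Omega_0}^{\infty,\mathrm B}(\widehat x,\mathbf d,k)+\sum_{i\in I}c_dk^2\int_{T_i(\Omega_\infty)}e^{ik(\mathbf d-\widehat x)\cdot z}m(z)\,dz .
\]

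Second, for each $i\in I$ I will substitute $z=T_iy=ay+b_i$ with $y\in\Omega_\infty$, so that $dz=a^d\,dy$. The contrast invariance \eqref{eq:m_selfsimilar_prelim} gives $m(ay+b_i)=m(y)$. The phase factors as
\[
e^{ik(\mathbf d-\widehat x)\cdot(ay+b_i)}=e^{ik(\mathbf d-\widehat x)\cdot b_i}\,e^{i(ak)(\mathbf d-\widehat x)\cdot y}.
\]
The copy integral therefore becomes
\[
c_dk^2a^d e^{ik(\mathbf d-\widehat x)\cdot b_i}\int_{\Omega_\infty}e^{i(ak)(\mathbf d-\widehat x)\cdot y}m(y)\,dy .
\]

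Third, I will rewrite the prefactor as $k^2a^d=a^{d-2}(ak)^2$. This recognizes the remaining expression as $a^{d-2}e^{ik(\mathbf d-\widehat x)\cdot b_i}A_{\Omega_\infty}^{\infty,\mathrm B}(\widehat x,\mathbf d,ak)$. Summing over $i$ yields \eqref{eq:born_recursive_prelim}.

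The only point needing care is the bookkeeping of the wavenumber. The Born amplitude carries $k^2$ outside the integral, and the rescaled amplitude must carry $(ak)^2$, which produces the factor $a^{d-2}$ rather than $a^d$. I will also check that $c_d$ does not depend on $k$, which holds by \eqref{eq:farfield_formula_prelim}. No further obstacle is expected.
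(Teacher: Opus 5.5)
Your proposal is correct and follows the paper's proof essentially verbatim: split the integral using the self-similar decomposition, substitute $z=ay+b_i$ on each copy, apply the contrast invariance, and absorb $k^2a^d=a^{d-2}(ak)^2$ into the rescaled Born amplitude. Your extra remarks on the disjointness of the union for the generated representative and on finiteness of the integrals are harmless additions that the paper leaves implicit.
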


\begin{proof}
Using the decomposition \eqref{eq:Omega_infty_selfsimilar_prelim}, we write
\[
A_{\Omega_\infty}^{\infty,\mathrm B}(\widehat x,\mathbf d,k)=c_dk^2\int_{\Omega_0}e^{ik(\mathbf d-\widehat x)\cdot y}m(y)\,dy+c_dk^2\sum_{i\in I}\int_{T_i(\Omega_\infty)}e^{ik(\mathbf d-\widehat x)\cdot y}m(y)\,dy.
\]
The first term is \(A_{\Omega_0}^{\infty,\mathrm B}(\widehat x,\mathbf d,k)\). For the \(i\)-th copy, set \(y=az+b_i\). Then \(dy=a^d dz\), and by the contrast invariance \eqref{eq:m_selfsimilar_prelim},
\begin{align*}
c_dk^2\int_{T_i(\Omega_\infty)}e^{ik(\mathbf d-\widehat x)\cdot y}m(y)\,dy&=c_dk^2a^d e^{ik(\mathbf d-\widehat x)\cdot b_i}\int_{\Omega_\infty}e^{i(ak)(\mathbf d-\widehat x)\cdot z}m(z)\,dz\\
&=a^{d-2}e^{ik(\mathbf d-\widehat x)\cdot b_i}A_{\Omega_\infty}^{\infty,\mathrm B}(\widehat x,\mathbf d,ak).
\end{align*}
Summing over \(i\in I\) proves \eqref{eq:born_recursive_prelim}.
\end{proof}

We now pass from the physical scaling identities to the auxiliary boundary setting. The preceding recursion is governed by dilation rather than translation, so the BFZ transform used below is the Fourier transform in the discrete logarithmic scale index. A fixed-wavenumber Helmholtz operator is not invariant under dilation, because spatial scaling also rescales the wavenumber. Accordingly, the construction defines a scale-normalized boundary model; it does not claim to diagonalize the preceding penetrable-medium problem.

From this point onward, we work in dimension \(d=3\), and \(\lambda>1\) denotes the dilation ratio. The continuous Mellin transform is the natural transform for dilations:
\[
(\mathcal Mf)(\xi)=\int_0^\infty f(r)r^{-i\xi}\,\frac{dr}{r}=\int_{\mathbb R}f(e^\tau)e^{-i\xi\tau}\,d\tau,\qquad \tau=\log r.
\]
Thus, it is the Fourier transform in the logarithmic radial variable. In the exact scale-periodic model, the scale factors \(\lambda^m\) correspond to the lattice points \(m\log\lambda\) in that variable. Replacing the continuous logarithmic variable by the scale index gives the discrete Mellin-Fourier series
\[
\sum_{m\in\mathbb Z}u_m e^{-im\theta\log\lambda},
\qquad
\theta\in\mathbb R\Big/\left(\frac{2\pi}{\log\lambda}\mathbb Z\right),
\]
which is the BFZ transform defined below in \eqref{eq:bfz_transform_def}. This converts a shift between adjacent scale layers into multiplication by a phase and converts a convolution in the scale index into an operator-valued BFZ symbol. The boundary formulation introduced next allows every dilated layer to be pulled back to one reference cell and represented in this discrete scale variable.

\subsection{Reference boundary in a fundamental scale cell}
\label{subsec:scale_periodic_setting}

We assume that, after a translation by a fixed vector \(p\in\mathbb R^3\), the self-similar medium admits an exactly scale-periodic extension. 
Assume \((\Omega_\infty + p)\subsetneq\lambda (\Omega_\infty + p)\) and define
\begin{equation}
\label{eq:scale_periodic_extension}
\Omega_\infty^\infty:=\bigcup_{m\in\mathbb Z}\lambda^m (\Omega_\infty + p).
\end{equation}
Then
\[
\lambda\Omega_\infty^\infty=\Omega_\infty^\infty.
\]

The associated volume shell is
\[
\Omega_0^0:=\lambda (\Omega_\infty + p)\setminus (\Omega_\infty + p).
\]
This set does not need to be open.
The passage from the bounded set \(\Omega_\infty\) to \(\Omega_\infty^\infty\) is an idealization analogous to the replacement of a finite crystal by an infinite periodic medium. This motivates the logarithmic scale coordinate but is not used as the boundary of the operator model.

\begin{figure}[H]
    \centering
    \includegraphics[width=0.7\linewidth]{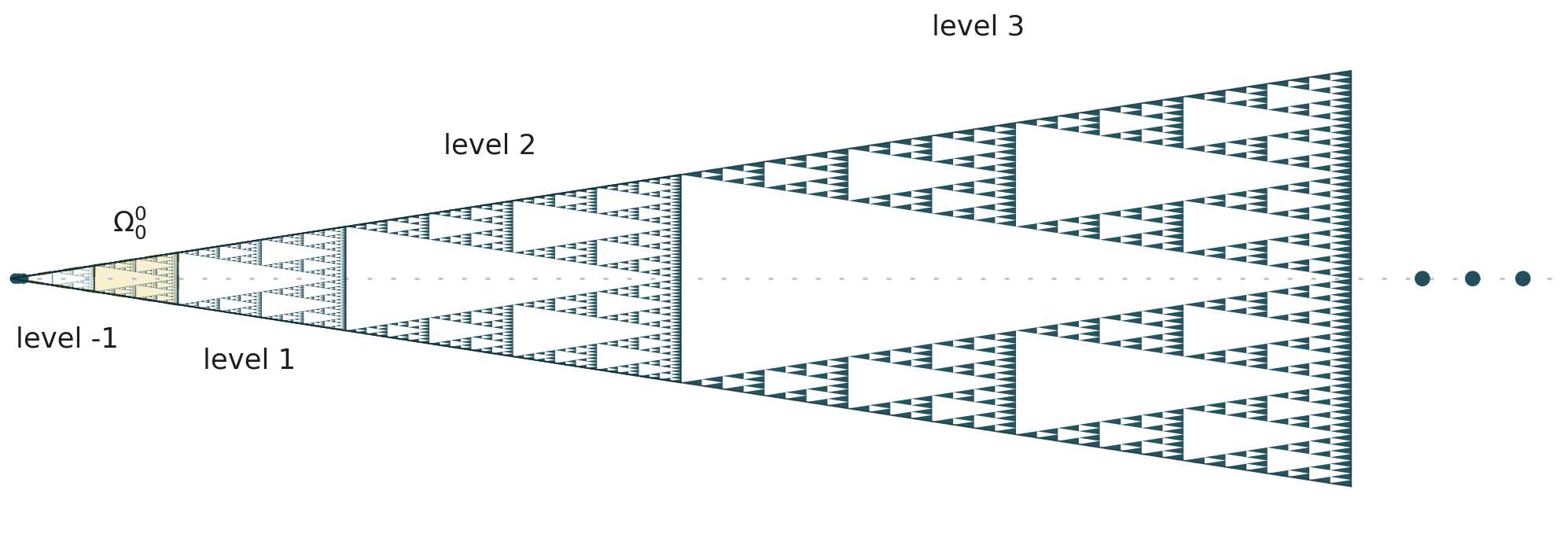}
    \caption{Schematic volume-scale extension \(\Omega_{\infty}^{\infty}\). The reference boundary \(\Gamma_0\) is selected independently inside one annular scale cell.}
    \label{fig:omega_inf_inf}
\end{figure}

We assume that \(\operatorname{int}\Omega_0^0\) contains a bounded \(C^\infty\) reference obstacle \(D_0\Subset\operatorname{int}\Omega_0^0\), possibly with finitely many connected components, and set \(\Gamma_0=\partial D_0\). Its scale-periodic extension is defined independently by
\begin{equation}
\label{eq:boundary_scale_partition}
\Gamma^\infty:=\bigsqcup_{m\in\mathbb Z}\Gamma_m,
\qquad
\Gamma_m:=\lambda^m\Gamma_0.
\end{equation}
In particular, \(\Gamma^\infty\) is not identified with \(\partial\Omega_\infty^\infty\); the interfaces of the nested volume shells are not part of the boundary-integral model.

To keep distinct scale layers uniformly separated, we impose the following geometric condition.

\begin{assumption}[Annular scale separation]
\label{ass:annular_separation}
There exist radii \(0<r_-<r_+\) such that
\[
\Gamma_0\subset\{x\in\mathbb R^3:r_-\le |x|\le r_+\},\qquad r_+<\lambda r_-.
\]
\end{assumption}
This condition is imposed on the smooth scale-normalized boundary model and gives the positive separation needed for the off-diagonal smoothing estimates. The next lemma makes the separation uniform over all nonzero relative scale indices.

\begin{lemma}[Uniform separation of distinct scale boundaries]
\label{lem:scale_separation}
Assume Assumption~\ref{ass:annular_separation}. Then, for every \(n\in\mathbb Z\setminus\{0\}\),
\[
\operatorname{dist}(\Gamma_0,\lambda^n\Gamma_0)\ge \delta_\ast,\qquad \text{where} \qquad \delta_\ast:=r_- - \lambda^{-1}r_+>0.
\]
\end{lemma}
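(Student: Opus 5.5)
The plan is a direct radial-shell argument, using only Assumption~\ref{ass:annular_separation} and the triangle inequality in the form $|x-y|\ge \bigl||x|-|y|\bigr|$. First, observe that $\delta_\ast>0$ is exactly the hypothesis $r_+<\lambda r_-$ divided by $\lambda$. Next, use the dilation invariance of distance to reduce to $n\ge1$: since $\lambda^{-n}$ scales all distances,
\[
\operatorname{dist}(\Gamma_0,\lambda^{-n}\Gamma_0)=\lambda^{-n}\operatorname{dist}(\lambda^{n}\Gamma_0,\Gamma_0).
\]
Because the factor $\lambda^{-n}<1$ goes the wrong way, I will not lean on this identity. Instead I will treat $n\ge1$ and $n\le-1$ separately by the same shell estimate.

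For $n\ge1$, take $x\in\Gamma_0$ and $y\in\lambda^n\Gamma_0$. Then $|x|\le r_+$ and $|y|\ge\lambda^n r_-\ge\lambda r_-$. Hence
\[
|x-y|\ge |y|-|x|\ge \lambda r_- - r_+=\lambda\delta_\ast\ge\delta_\ast .
\]

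For $n\le-1$, take $y\in\lambda^{n}\Gamma_0$, so that $|y|\le\lambda^{n}r_+\le\lambda^{-1}r_+$. Since $|x|\ge r_-$, we get
\[
|x-y|\ge |x|-|y|\ge r_- -\lambda^{-1}r_+=\delta_\ast .
\]

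Taking the infimum over $x$ and $y$ gives the claim in both cases. The bound is uniform in $n$ because $\lambda^{|n|}\ge\lambda$ for every $|n|\ge1$, so the nearest layer is the worst case.

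No step is a genuine obstacle. The only point needing care is the monotonicity $\lambda^{n}\ge\lambda$ for $n\ge1$ and $\lambda^{n}\le\lambda^{-1}$ for $n\le-1$. This monotonicity is what makes the adjacent layers $n=\pm1$ extremal and yields the stated constant $\delta_\ast$. It also shows that the infimum of the distances over all nonzero $n$ is at least $\delta_\ast$, not merely positive for each fixed $n$.
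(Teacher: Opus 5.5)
Your proof is correct and follows the paper's argument essentially verbatim. You use the same split into $n\ge1$ and $n\le-1$, the same radial-shell containments, and the same chain $\lambda^n r_- - r_+\ge\lambda r_- - r_+=\lambda\delta_\ast\ge\delta_\ast$ in the first case, while the aside on dilation invariance is unnecessary but harmless since you never use it.
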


\begin{proof}
If \(n\ge1\), then
\[
\lambda^n\Gamma_0\subset\{|x|\ge \lambda^n r_-\},\qquad \Gamma_0\subset\{|x|\le r_+\}.
\]
Thus
\[
\operatorname{dist}(\Gamma_0,\lambda^n\Gamma_0)\ge \lambda^n r_- - r_+\ge \lambda r_- - r_+.
\]
Since
\[
\lambda r_- - r_+=\lambda(r_- - \lambda^{-1}r_+)\ge r_- - \lambda^{-1}r_+,
\]
this gives the desired bound.

If \(n\le -1\), then
\[
\lambda^n\Gamma_0\subset\{|x|\le \lambda^n r_+\}\subset\{|x|\le \lambda^{-1}r_+\},\qquad\Gamma_0\subset\{|x|\ge r_-\}.
\]
Hence
\[
\operatorname{dist}(\Gamma_0,\lambda^n\Gamma_0)\ge r_- - \lambda^{-1}r_+=\delta_\ast.
\]
\end{proof}

The boundary-integral analysis is carried out on these smooth reference boundaries. We do not define layer-potential operators directly on a rough limiting fractal boundary. The limiting self-similar set enters through its iterative system, scale ratio, and smooth finite-depth approximations.

\subsection{Boundary operator on one scale cell}
\label{subsec:boundary_formulation_one_scale_cell}

Let \(\Gamma\) be a finite disjoint union of closed \(C^\infty\) surfaces, equipped with the outward unit normal. Define the boundary single-layer, double-layer operators, and the adjoint Neumann-Poincar\'e operator by
\[
(S_k^\Gamma\mu)(x):=\int_\Gamma G_k(x,y)\mu(y)\,d\sigma(y),
\]
\[
(K_k^\Gamma\mu)(x):=\operatorname{p.v.}\int_\Gamma \partial_{\nu_y}G_k(x,y)\mu(y)\,d\sigma(y),
\]
and
\[
((K_k^\Gamma)^\ast\mu)(x):=\operatorname{p.v.}\int_\Gamma \partial_{\nu_x}G_k(x,y)\mu(y)\,d\sigma(y).
\]
Here, \(\operatorname{p.v.}\) denotes the Cauchy principal value. Below, we use \({}^\dagger\) to denote Hilbert adjoints. For a coupling parameter \(\eta>0\), define the adjoint combined-field integral operator
\begin{equation}
\label{eq:combined_field_operator}
B_0^{k,\Gamma}:=\frac12I+(K_k^\Gamma)^\ast-i\eta k S_k^\Gamma.
\end{equation}
With the outward normal, the plus sign is the interior normal-trace sign. Thus, \(B_0^{k,\Gamma}\) is not identified here with the single-layer equation for an exterior impedance condition. It is the reference-cell block of the scale-normalized combined-field model used below, written in a form whose static part retains the equilibrium-density kernel described in Subsection~\ref{subsec:calderon_capacitance_pairing}. On smooth surfaces, these layer-potential operators are bounded on \(H^{1/2}(\Gamma)\). We use this realization because it contains the equilibrium densities and ranges of the Riesz projections considered below; no global invertibility of \(B_0^{k,\Gamma}\) is required.

We next define a wavenumber-weighted \(H^{1/2}\) space on \(\Gamma_0\), in which the tangential frequencies and the Helmholtz wavenumber are measured on the same scale.

Let \(-\Delta_{\Gamma_0}\) be the nonnegative Laplace-Beltrami operator. For \(k>0\), define
\begin{equation}
\label{eq:Hk}
\|\varphi\|_{\mathcal H_k}:=\|(k^2-\Delta_{\Gamma_0})^{1/4}\varphi\|_{L^2(\Gamma_0)},\qquad \mathcal H_k:=\left\{\varphi\in L^2(\Gamma_0):\|\varphi\|_{\mathcal H_k}<\infty\right\}.
\end{equation}
Equivalently, if
\[
-\Delta_{\Gamma_0}\phi_j=\mu_j\phi_j,\qquad \mu_j\ge0,
\qquad
\varphi=\sum_j\varphi_j\phi_j,
\]
then
\[
\|\varphi\|_{\mathcal H_k}^2=\sum_j(k^2+\mu_j)^{1/2}|\varphi_j|^2.
\]
Thus, \(\mathcal H_k\) is a wavenumber-weighted \(H^{1/2}\) boundary space: it weights low tangential frequencies by \(k^{1/2}\) and recovers the usual \(H^{1/2}\) weight at high tangential frequencies.

The following comparison transfers standard \(L^2\)- and \(H^{1/2}\)-mapping bounds to \(\mathcal H_k\).

\begin{lemma}[Comparison with \(L^2\) and \(H^{1/2}\)]
\label{lem:Hk_comparison}
For \(k\ge1\),
\[
k^{1/2}\|\varphi\|_{L^2(\Gamma_0)}\le \|\varphi\|_{\mathcal H_k}.
\]
Moreover, there is a constant \(C\), independent of \(k\ge1\), such that
\[
\|\varphi\|_{\mathcal H_k}\le C\left(k^{1/2}\|\varphi\|_{L^2(\Gamma_0)}+\|\varphi\|_{H^{1/2}(\Gamma_0)}\right).
\]
Consequently, if \(A\) is bounded on both \(L^2(\Gamma_0)\) and \(H^{1/2}(\Gamma_0)\), then
\[
\|A\|_{\mathcal L(\mathcal H_k)}\le C\max\left\{\|A\|_{L^2\to L^2},\|A\|_{H^{1/2}\to H^{1/2}}\right\}.
\]
\end{lemma}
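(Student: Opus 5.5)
The plan is to argue spectrally, in the eigenbasis \(\{\phi_j\}\) of \(-\Delta_{\Gamma_0}\) (which is an orthonormal basis of \(L^2(\Gamma_0)\), since \(\Gamma_0\) is a compact smooth surface). In this basis each norm is a weighted \(\ell^2\) norm:
\[
\|\varphi\|_{\mathcal H_k}^2=\sum_j(k^2+\mu_j)^{1/2}|\varphi_j|^2,\qquad \|\varphi\|_{L^2}^2=\sum_j|\varphi_j|^2 .
\]
For the \(H^{1/2}(\Gamma_0)\) norm I will fix the equivalent realization \(\|\varphi\|_{H^{1/2}}^2=\sum_j(1+\mu_j)^{1/2}|\varphi_j|^2\). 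Any other standard choice differs from this one by a \(k\)-independent constant.

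For the lower bound, note that \((k^2+\mu_j)^{1/2}\ge k\) for every \(j\), because \(\mu_j\ge0\). Summing over \(j\) gives \(\|\varphi\|_{\mathcal H_k}^2\ge k\|\varphi\|_{L^2}^2\). This part needs neither \(k\ge1\) nor any constant.

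For the upper bound, I will use the elementary inequality \((s+t)^{1/2}\le s^{1/2}+t^{1/2}\). Together with \(\mu_j\le 1+\mu_j\), it gives
\[
(k^2+\mu_j)^{1/2}\le k+(1+\mu_j)^{1/2}.
\]
Summing over \(j\) yields \(\|\varphi\|_{\mathcal H_k}^2\le k\|\varphi\|_{L^2}^2+\|\varphi\|_{H^{1/2}}^2\). Taking square roots and using \(\sqrt{A+B}\le\sqrt A+\sqrt B\) gives the claim. The constant \(C\) is simply the norm-equivalence constant for \(H^{1/2}\), which does not depend on \(k\).

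The operator bound is the step that needs the most care. A naive approach would first apply the upper comparison to \(A\varphi\), then bound \(\|A\varphi\|_{L^2}\) and \(\|A\varphi\|_{H^{1/2}}\) by \(\|\varphi\|_{L^2}\) and \(\|\varphi\|_{H^{1/2}}\), and finally try to return to \(\|\varphi\|_{\mathcal H_k}\). This fails at the last step, because \(\|\varphi\|_{H^{1/2}}\) is not controlled by \(\|\varphi\|_{\mathcal H_k}\) uniformly in \(k\) via the lower inequality alone. It does hold directly, though: \((1+\mu_j)^{1/2}\le(k^2+\mu_j)^{1/2}\) for \(k\ge1\), so \(\|\varphi\|_{H^{1/2}}\le\|\varphi\|_{\mathcal H_k}\).

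Even with this, the naive chain only gives the weaker bound
\[
\|A\varphi\|_{\mathcal H_k}\le C\bigl(k^{1/2}\|A\|_{L^2\to L^2}\|\varphi\|_{L^2}+\|A\|_{H^{1/2}\to H^{1/2}}\|\varphi\|_{H^{1/2}}\bigr)\le C'\max\{\cdot\}\|\varphi\|_{\mathcal H_k},
\]
where the last step uses \(k^{1/2}\|\varphi\|_{L^2}\le\|\varphi\|_{\mathcal H_k}\) and \(\|\varphi\|_{H^{1/2}}\le\|\varphi\|_{\mathcal H_k}\). So in fact this chain suffices, and it is the argument I will write.

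As a cross-check, I would note the alternative route by interpolation. Write \(\mathcal H_k\) as the complex interpolation space \([L^2_k,H^1_k]_{1/2}\), with \(\|\varphi\|_{H^1_k}^2=\sum_j(k^2+\mu_j)|\varphi_j|^2\). One could then interpolate \(A\) between its \(L^2\) bound and an \(H^1\) bound. However, that would require \(A\) to be bounded on \(H^1\), which is not assumed, so the direct chain above is preferable.
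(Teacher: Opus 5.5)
Your proof is correct and follows the paper's argument: the first two estimates come from the same spectral weight inequalities, and your observation that $(1+\mu_j)^{1/2}\le(k^2+\mu_j)^{1/2}$ for $k\ge1$ is the lower half of the two-sided equivalence $\|\varphi\|_{\mathcal H_k}^2\asymp k\|\varphi\|_{L^2(\Gamma_0)}^2+\|\varphi\|_{H^{1/2}(\Gamma_0)}^2$ that the paper uses for the operator bound. In a write-up, drop the ``naive approach fails'' detour, since the chain you settle on is exactly the intended argument and gives the $k$-independent constant $2$ for the spectral realization of the $H^{1/2}$ norm.
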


\begin{proof}
The first two estimates follow directly from
\[
(k^2+\mu_j)^{1/2}\ge k, \qquad
(k^2+\mu_j)^{1/2}\le C\left(k+(1+\mu_j)^{1/2}\right), \qquad k\ge1.
\]
For the operator estimate, use the equivalent bound
\[
\|\varphi\|_{\mathcal H_k}^2\asymp k\|\varphi\|_{L^2(\Gamma_0)}^2+\|\varphi\|_{H^{1/2}(\Gamma_0)}^2,
\]
with constants independent of \(k\ge1\), and apply the assumed \(L^2\) and \(H^{1/2}\) bounds to \(A\varphi\).
\end{proof}

The space \(\mathcal H_k\) is used after each scale boundary is normalized to \(\Gamma_0\).

\section{Scale-periodic BFZ symbols and determinant winding}
\label{sec:high_freq_gap}

With the reference-cell boundary operator fixed, we now construct the Laurent operator in the scale index, fiberize it by the BFZ transform, and define determinant winding on finite-dimensional invariant spectral subspaces.

\subsection{Block Laurent structure and weighted spaces}
\label{subsec:blocks_Laurent}

We now formulate the boundary-integral operator on the reference cell in the scale index. We use the following construction in the subsequent BFZ analysis. 

On the reference boundary, write
\(B_0^k:=B_0^{k,\Gamma_0}\) for the reference-cell block in
\eqref{eq:combined_field_operator}; since \(\Gamma_0\) is smooth, the standard mapping properties of layer-potential operators give
\(B_0^k\in\mathcal L(\mathcal H_k)\).

To write the interscale combined-field interactions compactly, set
\[
\mathsf K_\eta^k(x,z):=\partial_{\nu_x}G_k(x,z)-i\eta kG_k(x,z).
\]
We use equilibrium-density normalization under dilation: a reference density \(\varphi\) on \(\Gamma_0\) represents a density \(\mu_n\) on \(\Gamma_n\) through
\[
\mu_n(\lambda^ny):=\lambda^{-n}\varphi(y),\qquad y\in\Gamma_0.
\]
Since \(d\sigma(\lambda^ny)=\lambda^{2n}d\sigma(y)\), pulling the source integral back to \(\Gamma_0\) gives
\[
\int_{\Gamma_n}\mathsf K_\eta^k(x,z)\mu_n(z)\,d\sigma(z)=\lambda^n\int_{\Gamma_0}\mathsf K_\eta^k(x,\lambda^ny)\varphi(y)\,d\sigma(y).
\]
Consequently, for \(n\in\mathbb Z\setminus\{0\}\), define the scale-normalized interscale block \(B_n^k:\mathcal H_k\to\mathcal H_k\) by
\begin{equation}
\label{eq:Bn_def_weighted_sec}
(B_n^k\varphi)(x):=\lambda^n\int_{\Gamma_0}\mathsf K_\eta^k(x,\lambda^ny)\varphi(y)\,d\sigma(y),\qquad x\in\Gamma_0.
\end{equation}
We reverse the boundary sequence index: \(\varphi_m\) represents the physical density on \(\Gamma_{-m}=\lambda^{-m}\Gamma_0\). After normalizing the target layer \(\Gamma_{-\ell}\) to \(\Gamma_0\), the source layer \(\Gamma_{-m}\) becomes the relative copy \(\Gamma_{\ell-m}\). Hence, the formal Laurent action is
\begin{equation}
\label{eq:Tk_finitely_supported_def}
\boldsymbol\varphi\longmapsto \left(\sum_{m\in\mathbb Z}B_{\ell-m}^k\varphi_m\right)_{\ell\in\mathbb Z}.
\end{equation}
With the Fourier convention \(e^{-imt}\), this reverse indexing produces the symbol \(\sum_ne^{-int}B_n^k\) and fixes the orientation of all the winding numbers below.
At this stage, this expression is only formal, because the coefficients \(\{B_n^k\}_{n\in\mathbb Z}\) may not be absolutely summable in the positive-scale direction. The actual scale operator used below is therefore defined after introducing the weight \(\lambda^{\beta m}\), for which the weighted Laurent coefficients become summable.
 
The following bounds justify the weight in the Laurent series.

\begin{proposition}[One-sided bounds for interscale blocks]
\label{prop:one_sided_Bn_bounds}
There exists \(C_k<\infty\), depending on $k$, $\eta$, $\lambda$, and $\Gamma_0$ such that
\begin{equation}
\|B_n^k\|_{\mathcal L(\mathcal H_k)}\le C_k,\qquad n\ge1, \qquad \|B_{-m}^k\|_{\mathcal L(\mathcal H_k)}\le C_k\lambda^{-m},\qquad m\ge1.
\end{equation}
Moreover, \(B_n^k:\mathcal H_k\to\mathcal H_k\) is compact for every \(n\neq0\).
\end{proposition}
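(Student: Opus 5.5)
The plan is to treat $B_n^k$, for $n\neq0$, as an integral operator on $\Gamma_0$ with a kernel that is smooth on $\Gamma_0\times\Gamma_0$. I will bound its derivatives explicitly in terms of $n$, convert those kernel bounds into operator bounds on $L^2(\Gamma_0)$ and $H^{1/2}(\Gamma_0)$ (in fact into $H^s$ for every $s$), and then use Lemma~\ref{lem:Hk_comparison} to reach $\mathcal L(\mathcal H_k)$.

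The kernel is $\kappa_n(x,y):=\lambda^n\mathsf K_\eta^k(x,\lambda^n y)$ with $x,y\in\Gamma_0$. By Lemma~\ref{lem:scale_separation}, $|x-\lambda^ny|\ge\delta_\ast>0$ uniformly in $n\neq0$. Hence $G_k(x,z)=e^{ik|x-z|}/(4\pi|x-z|)$ and $\partial_{\nu_x}G_k$ are real-analytic along the relevant configurations. I will split into two cases.

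In the case $n=-m\le-1$, the source point $\lambda^{-m}y$ lies in the ball $\{|z|\le\lambda^{-1}r_+\}$, and $\Gamma_0$ stays at distance at least $\delta_\ast$ from this ball. Let $\mathcal C:=\{(x,z):x\in\Gamma_0,\ |z|\le\lambda^{-1}r_+\}$. On this compact set, every $x$-derivative of $\mathsf K_\eta^k(x,z)$ up to a fixed order is bounded by a constant $C_k$. Differentiating in $y$ produces a factor $\lambda^{-m}$ from the chain rule, so each $y$-derivative only helps. The prefactor $\lambda^{n}=\lambda^{-m}$ then gives $\|\kappa_{-m}\|_{C^\ell(\Gamma_0\times\Gamma_0)}\le C_k\lambda^{-m}$.

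In the case $n\ge1$, the source point satisfies $|z|=\lambda^n|y|\ge\lambda^n r_-$, so $|x-z|\ge\lambda^n r_- -r_+\gtrsim\lambda^n$, where the implied constant uses $\lambda r_->r_+$. The bounds I expect are
\[
|G_k(x,z)|\lesssim\lambda^{-n},\qquad |\partial_{\nu_x}G_k(x,z)|\lesssim(k+\lambda^{-n})\lambda^{-n},
\]
so $\lambda^n|\mathsf K^k_\eta|\le C_k$. For $x$-derivatives, each one produces at most a factor $k$ or $|x-z|^{-1}$ without worsening the $\lambda^{-n}$ decay. For $y$-derivatives, the chain rule produces a factor $\lambda^n$, but each $z$-derivative of $e^{ik|x-z|}/|x-z|$ only gives a factor bounded by $k+|x-z|^{-1}$. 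I therefore need to check that $\partial_y$ of the phase $k|x-\lambda^ny|$ stays bounded. Writing $\nabla_y|x-\lambda^n y|=\lambda^n\,\frac{\lambda^n y-x}{|\lambda^n y-x|}$, this is of order $\lambda^n$, which is \emph{not} bounded.

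This is the main obstacle: $y$-derivatives of the kernel grow like $(k\lambda^n)^j$. To avoid it, I will never differentiate in $y$ at high order. Only $L^2$ and $H^{1/2}$ bounds are needed, so I will use Schur's test for the $L^2$ bound, which needs only $\sup|\kappa_n|\le C_k$. For $H^{1/2}$, I will use $x$-regularity only: $\|\kappa_n\|_{C^1_x L^\infty_y}\le C_k$ gives boundedness $L^2\to H^1$. This is uniform in $n$, since $x$-derivatives cost only $k+\delta_\ast^{-1}$. Then $H^{1/2}\hookrightarrow L^2\to H^1\hookrightarrow H^{1/2}$ gives both the $L^2$ and $H^{1/2}$ bounds, with $C_k$ in the case $n\ge1$ and $C_k\lambda^{-m}$ in the case $n=-m$. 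The analogous $x$-only argument works for $n<0$ as well. Lemma~\ref{lem:Hk_comparison} (with $k\ge1$, or with $k$-dependent constants otherwise, via the comparison with the norms) then transfers both estimates to $\mathcal L(\mathcal H_k)$.

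For compactness with $n$ fixed, the same argument shows that $B_n^k$ maps $L^2(\Gamma_0)$ boundedly into $H^1(\Gamma_0)$. Since $\mathcal H_k$ is equivalent to $H^{1/2}$ for fixed $k$, $B_n^k$ factors as $\mathcal H_k\hookrightarrow L^2\to H^1\hookrightarrow\mathcal H_k$. The last embedding is compact by Rellich's theorem on the compact smooth surface $\Gamma_0$, so $B_n^k$ is compact.
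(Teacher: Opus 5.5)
Your proposal is correct and follows essentially the same route as the paper. Both arguments rest on $x$-only kernel bounds, using separation $|x-\lambda^n y|\gtrsim\lambda^n$ for $n\ge1$ and $\ge\delta_\ast$ for $n\le-1$, Schur's test together with an $L^2\to H^1$ estimate, the embeddings $H^{1/2}\hookrightarrow L^2$ and $H^1\hookrightarrow H^{1/2}$, and Lemma~\ref{lem:Hk_comparison} (norm equivalence for $k<1$); compactness then comes from factoring through the compact embedding $H^1(\Gamma_0)\hookrightarrow\mathcal H_k$, and your observation that $y$-derivatives must be avoided for $n\ge1$ is exactly why the paper also differentiates only in $x$.
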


\begin{proof}
We first prove estimates at the level of kernels. Since \(k\) is fixed and the Helmholtz kernel is smooth away from the diagonal, its tangential \(x\)-derivatives in local surface charts, including derivatives of \(\nu_x\), satisfy
\[
|\nabla_{\Gamma_0,x}^j\mathsf K_\eta^k(x,z)|\le C_{k,c}(1+|x-z|)^{-1},\qquad 0\le j\le1,
\]
whenever \(|x-z|\ge c\).

Let \(n\ge1\) and set \(R:=\lambda^n\). Since \(\Gamma_0\subset\{|x|\le r_+\}\) and \(R\Gamma_0\subset\{|x|\ge Rr_-\}\), we have
\[
|x-Ry|\ge cR,\qquad x,y\in\Gamma_0,
\]
with \(c>0\) independent of \(n\). Therefore
\[
\sup_{x,y\in\Gamma_0}\left(|R\mathsf K_\eta^k(x,Ry)|+|\nabla_{\Gamma_0,x}(R\mathsf K_\eta^k(x,Ry))|\right)\le C_k.
\]
Schur's test and differentiation in the \(x\)-variable imply
\[
\|B_n^k\|_{L^2\to L^2}+\|B_n^k\|_{L^2\to H^1}\le C_k,\qquad n\ge1.
\]
Since \(H^{1/2}(\Gamma_0)\hookrightarrow L^2(\Gamma_0)\) and \(H^1(\Gamma_0)\hookrightarrow H^{1/2}(\Gamma_0)\), this also gives
\[
\|B_n^k\|_{H^{1/2}\to H^{1/2}}\le C_k,\qquad n\ge1.
\]
For \(k\ge1\), Lemma~\ref{lem:Hk_comparison} yields
\[
\|B_n^k\|_{\mathcal L(\mathcal H_k)}\le C_k,\qquad n\ge1.
\]

Now, let \(m\ge1\) and set \(R:=\lambda^m\). Then
\[
B_{-m}^k\varphi(x)=R^{-1}\int_{\Gamma_0}\mathsf K_\eta^k(x,R^{-1}y)\varphi(y)\,d\sigma(y).
\]
By annular separation in Assumption~\ref{ass:annular_separation},
\[
|x-R^{-1}y|\ge \delta_\ast,\qquad x,y\in\Gamma_0.
\]
Therefore
\[
\sup_{x,y\in\Gamma_0}\left(|R^{-1}\mathsf K_\eta^k(x,R^{-1}y)|+|\nabla_{\Gamma_0,x}(R^{-1}\mathsf K_\eta^k(x,R^{-1}y))|\right)\le C_kR^{-1}.
\]
The same Schur and differentiation argument gives
\[
\|B_{-m}^k\|_{L^2\to L^2}+\|B_{-m}^k\|_{L^2\to H^1}\le C_k\lambda^{-m}.
\]
Consequently,
\[
\|B_{-m}^k\|_{H^{1/2}\to H^{1/2}}\le C_k\lambda^{-m},
\]
and, for \(k\ge1\), Lemma~\ref{lem:Hk_comparison} gives
\[
\|B_{-m}^k\|_{\mathcal L(\mathcal H_k)}\le C_k\lambda^{-m}.
\]
For fixed \(0<k<1\), the norms of \(\mathcal H_k\) and \(\mathcal H_1\) are equivalent, with constants depending on \(k\). The same \(L^2\)- and \(H^1\)-bounds therefore give both estimates on \(\mathcal H_k\).

It remains to prove compactness. For fixed \(n\neq0\), the kernel of \(B_n^k\) is \(C^\infty\) on \(\Gamma_0\times\Gamma_0\), because \(\Gamma_0\) and \(\lambda^n\Gamma_0\) are separated. Hence
\[
B_n^k:L^2(\Gamma_0)\to H^1(\Gamma_0)
\]
is bounded. Since \(\mathcal H_k\hookrightarrow L^2(\Gamma_0)\) continuously and \(H^1(\Gamma_0)\hookrightarrow \mathcal H_k\) compactly for fixed \(k\), the map
\[
\mathcal H_k\xrightarrow{B_n^k}H^1(\Gamma_0)\hookrightarrow\mathcal H_k
\]
is compact. Thus, \(B_n^k\) is compact on \(\mathcal H_k\).
\end{proof}
 
The asymmetric estimates of Proposition~\ref{prop:one_sided_Bn_bounds} do not ensure absolute summability of the unweighted off-diagonal Laurent coefficients in the positive-scale direction. We therefore introduce an exponential scale weight adapted to these estimates.

Fix
\[
\beta\in(-1,0).
\]
Define the weighted scale space
\begin{equation}
\mathcal H_{k,\beta}^{\mathrm{sc}}:=\left\{\boldsymbol\varphi=(\varphi_m)_{m\in\mathbb Z}:\sum_{m\in\mathbb Z}\lambda^{2\beta m}\|\varphi_m\|_{\mathcal H_k}^2<\infty\right\},
\end{equation}
where \({\mathrm{sc}}\) denotes the scale-sequence space. Its norm is
\[
\|\boldsymbol\varphi\|_{\mathcal H_{k,\beta}^{\mathrm{sc}}}^2:=\sum_{m\in\mathbb Z}\lambda^{2\beta m}\|\varphi_m\|_{\mathcal H_k}^2.
\]
Let
\[
\mathcal H_k^{\mathrm{sc}}:=\ell^2(\mathbb Z;\mathcal H_k),
\]
and define
\[
\mathcal W_\beta:\mathcal H_{k,\beta}^{\mathrm{sc}}\to\mathcal H_k^{\mathrm{sc}},\qquad(\mathcal W_\beta\boldsymbol\varphi)_m:=\lambda^{\beta m}\varphi_m.
\]
Then \(\mathcal W_\beta\) is an isometric isomorphism.

The next lemma shows that \(-1<\beta<0\) compensates for the one-sided block bounds and yields a bounded Laurent operator.

\begin{lemma}[Weighted Laurent series]
\label{lem:weighted_laurent_realization}
For every \(k>0\) and every \(\beta\in(-1,0)\),
\begin{equation}
\label{eq:weighted_summability}
\sum_{n\in\mathbb Z\setminus\{0\}}(1+|n|)\lambda^{\beta n}\|B_n^k\|_{\mathcal L(\mathcal H_k)}<\infty.
\end{equation}
Consequently, the Laurent operator
\begin{equation}
(\widetilde{\mathcal T}_{k,\beta}\boldsymbol\psi)_\ell:=\sum_{m\in\mathbb Z}\lambda^{\beta(\ell-m)}B_{\ell-m}^k\psi_m
\end{equation}
is a bounded operator on \(\mathcal H_k^{\mathrm{sc}}\), with
\[
\|\widetilde{\mathcal T}_{k,\beta}\|_{\mathcal L(\mathcal H_k^{\mathrm{sc}})} \le \sum_{n\in\mathbb Z}\lambda^{\beta n}\|B_n^k\|_{\mathcal L(\mathcal H_k)}.
\]
The corresponding operator on the weighted scale space is
\[
\mathcal T_k^{(\beta)}:=\mathcal W_\beta^{-1}\widetilde{\mathcal T}_{k,\beta}\mathcal W_\beta \qquad\text{on }\mathcal H_{k,\beta}^{\mathrm{sc}}.
\]
On finitely supported sequences, \(\mathcal T_k^{(\beta)}\) agrees with the formal convolution
\[
(\mathcal T_k^{(\beta)}\boldsymbol\varphi)_\ell=\sum_{m\in\mathbb Z}B_{\ell-m}^k\varphi_m.
\]
\end{lemma}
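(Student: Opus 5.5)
The plan is to split the sum in \eqref{eq:weighted_summability} into positive and negative indices and use the one-sided bounds of Proposition~\ref{prop:one_sided_Bn_bounds}. For \(n\ge1\) we have \(\|B_n^k\|\le C_k\) and \(\lambda^{\beta n}=\lambda^{-|\beta|n}\). The positive part is therefore bounded by \(C_k\sum_{n\ge1}(1+n)\lambda^{-|\beta| n}\), which is finite because \(\beta<0\) and \(\lambda>1\). For \(n=-m\) with \(m\ge1\), the bound \(\|B_{-m}^k\|\le C_k\lambda^{-m}\) gives terms \((1+m)\lambda^{-\beta m}\lambda^{-m}=(1+m)\lambda^{-(1+\beta)m}\). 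This series converges precisely because \(\beta>-1\). The two constraints on \(\beta\) are thus exactly what compensate the two one-sided estimates. The proposition is stated for general \(k>0\), including \(0<k<1\), so it covers every \(k\). The \(n=0\) block contributes the single term \(\|B_0^k\|<\infty\), since \(B_0^k\in\mathcal L(\mathcal H_k)\).

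Next comes boundedness of \(\widetilde{\mathcal T}_{k,\beta}\), which I would prove by the vector-valued Young inequality for convolutions on \(\ell^2(\mathbb Z;\mathcal H_k)\). Set \(c_n:=\lambda^{\beta n}\|B_n^k\|\in\ell^1(\mathbb Z)\). Then
\[
\|(\widetilde{\mathcal T}_{k,\beta}\boldsymbol\psi)_\ell\|_{\mathcal H_k}\le\sum_m c_{\ell-m}\|\psi_m\|_{\mathcal H_k}.
\]
Scalar Young's inequality (\(\ell^1*\ell^2\subset\ell^2\)) then gives the stated norm bound \(\sum_n c_n\). The same estimate shows that each defining series converges absolutely in \(\mathcal H_k\), so the operator is well defined on all of \(\mathcal H_k^{\mathrm{sc}}\) and not only formally. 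The conjugated operator \(\mathcal T_k^{(\beta)}\) is then bounded on \(\mathcal H_{k,\beta}^{\mathrm{sc}}\) because \(\mathcal W_\beta\) is an isometric isomorphism.

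Finally I would identify \(\mathcal T_k^{(\beta)}\) on finitely supported sequences by direct computation. Note that \((\mathcal W_\beta\boldsymbol\varphi)_m=\lambda^{\beta m}\varphi_m\). Then
\[
(\widetilde{\mathcal T}_{k,\beta}\mathcal W_\beta\boldsymbol\varphi)_\ell=\sum_m\lambda^{\beta(\ell-m)}\lambda^{\beta m}B_{\ell-m}^k\varphi_m=\lambda^{\beta\ell}\sum_m B^k_{\ell-m}\varphi_m,
\]
and applying \(\mathcal W_\beta^{-1}\) removes the factor \(\lambda^{\beta\ell}\). For a finitely supported sequence each sum is finite, so no convergence issue arises. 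The argument is essentially routine. The only point needing care is the sign bookkeeping: \(\lambda^{\beta n}\) must decay for \(n\to+\infty\), where the blocks are merely bounded, and may grow at most like \(\lambda^{(1-\varepsilon)|n|}\) for \(n\to-\infty\), where the blocks decay like \(\lambda^{-|n|}\). This is exactly what forces \(\beta\in(-1,0)\). The extra factor \((1+|n|)\) is harmless because both tails are geometric.
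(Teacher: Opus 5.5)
Your proposal is correct and follows the paper's proof essentially step for step. You split at $n=0$ and apply the one-sided bounds of Proposition~\ref{prop:one_sided_Bn_bounds}, with $\beta<0$ and $1+\beta>0$ controlling the two geometric tails; you then use Young's $\ell^1*\ell^2$ inequality, conjugate by $\mathcal W_\beta$, and compute directly on finitely supported sequences. The one minor difference is that you note the defining series converges absolutely in $\mathcal H_k$ for every $\boldsymbol\psi\in\mathcal H_k^{\mathrm{sc}}$, so you define the operator directly instead of by the paper's density extension, which is a harmless and slightly cleaner variant.
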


\begin{proof}
For \(n\ge1\), Proposition~\ref{prop:one_sided_Bn_bounds} gives
\[
\lambda^{\beta n}\|B_n^k\|\le C_k\lambda^{\beta n}.
\]
Since \(\beta<0\),
\[
\sum_{n\ge1}(1+n)\lambda^{\beta n}<\infty.
\]
For \(m\ge1\), the same proposition gives
\[
\lambda^{-\beta m}\|B_{-m}^k\|\le C_k\lambda^{-(1+\beta)m}.
\]
Since \(1+\beta>0\),
\[
\sum_{m\ge1}(1+m)\lambda^{-(1+\beta)m}<\infty.
\]
Thus, \eqref{eq:weighted_summability} holds. Including the \(n=0\) term gives
\[
\sum_{n\in\mathbb Z}\lambda^{\beta n}\|B_n^k\|_{\mathcal L(\mathcal H_k)}<\infty.
\]

Set
\[
a_n:=\lambda^{\beta n}\|B_n^k\|_{\mathcal L(\mathcal H_k)}, \qquad b_m:=\|\psi_m\|_{\mathcal H_k}.
\]
For every \(\ell\in\mathbb Z\),
\[
\|(\widetilde{\mathcal T}_{k,\beta}\boldsymbol\psi)_\ell\|_{\mathcal H_k}\le\sum_{n\in\mathbb Z}a_nb_{\ell-n}.
\]
Young's \(\ell^1*\ell^2\) inequality, therefore, gives
\[
\|\widetilde{\mathcal T}_{k,\beta}\boldsymbol\psi\|_{\mathcal H_k^{\mathrm{sc}}}\le\|a*b\|_{\ell^2} \le \left(\sum_{n\in\mathbb Z}\lambda^{\beta n}\|B_n^k\|_{\mathcal L(\mathcal H_k)}\right)\|\boldsymbol\psi\|_{\mathcal H_k^{\mathrm{sc}}}.
\]
Density extends \(\widetilde{\mathcal T}_{k,\beta}\) to a bounded operator on \(\mathcal H_k^{\mathrm{sc}}\). The definition of \(\mathcal T_k^{(\beta)}\) follows by conjugation with the isometric isomorphism \(\mathcal W_\beta\). Finally, if \(\boldsymbol\varphi\) is finitely supported and \(\boldsymbol\psi=\mathcal W_\beta\boldsymbol\varphi\), then
\[
(\widetilde{\mathcal T}_{k,\beta}\boldsymbol\psi)_\ell=\sum_m \lambda^{\beta(\ell-m)}B_{\ell-m}^k\lambda^{\beta m}\varphi_m=\lambda^{\beta\ell}\sum_m B_{\ell-m}^k\varphi_m.
\]
Applying \(\mathcal W_\beta^{-1}\) gives the formal Laurent series.
\end{proof}

The BFZ parameter lies on the circle dual to the integer scale index. Set
\[
Y^\ast:=\mathbb R\Big/\left(\frac{2\pi}{\log\lambda}\mathbb Z\right),
\]
and identify this compact circle with its half-open representative \([0,2\pi/\log\lambda)\) when writing integrals. The BFZ transform on the reverse-indexed boundary sequence is
\begin{equation}
\label{eq:bfz_transform_def}
(\mathcal F_\lambda \mathbf u)(\theta):=\sum_{m\in\mathbb Z}u_m e^{-im\theta\log\lambda}, \qquad \theta\in Y^\ast.
\end{equation}
It extends to a unitary map
\[
\mathcal F_\lambda:\ell^2(\mathbb Z;X)\to L^2\left(Y^\ast;X,\frac{\log\lambda}{2\pi}d\theta\right)
\]
for every Hilbert space \(X\).

The next theorem diagonalizes the scale convolution.

\begin{theorem}[BFZ fiberization of the weighted boundary operator]
\label{thm:weighted_bfz_fiberization}
The operator \(\widetilde{\mathcal T}_{k,\beta}\) is fiberized by \(\mathcal F_\lambda\):
\[
\mathcal F_\lambda\widetilde{\mathcal T}_{k,\beta}\mathcal F_\lambda^{-1}=\int_{Y^\ast}^{\oplus}\widetilde{\mathcal T}_{k,\beta}(\theta)\,\frac{\log\lambda}{2\pi}d\theta,
\]
where
\begin{equation}
\label{eq:weighted_symbol_def}
\widetilde{\mathcal T}_{k,\beta}(\theta) = \sum_{n\in\mathbb Z}e^{-in\theta\log\lambda}\lambda^{\beta n}B_n^k.
\end{equation}
The series in \eqref{eq:weighted_symbol_def} converges absolutely in the operator norm, uniformly in \(\theta\), and the map \(\theta\mapsto\widetilde{\mathcal T}_{k,\beta}(\theta)\) is \(C^1\) in the operator norm. Moreover,
\[
\sigma_{\mathcal H_{k,\beta}^{\mathrm{sc}}}\bigl(\mathcal T_k^{(\beta)}\bigr)=\sigma_{\mathcal H_k^{\mathrm{sc}}}\bigl(\widetilde{\mathcal T}_{k,\beta}\bigr)=\overline{\bigcup_{\theta\in Y^\ast}\sigma_{\mathcal H_k}\bigl(\widetilde{\mathcal T}_{k,\beta}(\theta)\bigr)}.
\]
\end{theorem}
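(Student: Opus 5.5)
The plan is to treat $\widetilde{\mathcal T}_{k,\beta}$ as a block Laurent (convolution) operator with $\ell^1$-summable operator coefficients $A_n:=\lambda^{\beta n}B_n^k$, which is exactly what Lemma~\ref{lem:weighted_laurent_realization} provides, and to run the standard operator-valued Fourier series argument in the scale index. The proof has four parts.

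\textbf{Convergence and regularity of the symbol.} Since $|e^{-in\theta\log\lambda}|=1$, the series \eqref{eq:weighted_symbol_def} is dominated termwise by $\|A_n\|$. It therefore converges absolutely in $\mathcal L(\mathcal H_k)$, uniformly in $\theta$, by \eqref{eq:weighted_summability}. The termwise derivative is $-in\log\lambda\, e^{-in\theta\log\lambda}A_n$, dominated by $\log\lambda\,(1+|n|)\|A_n\|$. This is summable, again by \eqref{eq:weighted_summability}, which is precisely why the factor $(1+|n|)$ was built into that lemma. Uniform convergence of the derivative series then gives $\theta\mapsto\widetilde{\mathcal T}_{k,\beta}(\theta)$ in $C^1(Y^\ast;\mathcal L(\mathcal H_k))$. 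Continuity of the symbol on the compact circle $Y^\ast$ also follows.

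\textbf{Fiberization.} I would first check the identity on finitely supported $\boldsymbol\psi$. Writing $(\widetilde{\mathcal T}_{k,\beta}\boldsymbol\psi)_\ell=\sum_m A_{\ell-m}\psi_m$ and applying \eqref{eq:bfz_transform_def}, reindex with $n=\ell-m$ and use $e^{-i\ell\theta\log\lambda}=e^{-in\theta\log\lambda}e^{-im\theta\log\lambda}$. The double sum converges absolutely, so Fubini applies and gives
\[
\mathcal F_\lambda(\widetilde{\mathcal T}_{k,\beta}\boldsymbol\psi)(\theta)=\widetilde{\mathcal T}_{k,\beta}(\theta)\,(\mathcal F_\lambda\boldsymbol\psi)(\theta).
\]
This is the place where the reverse indexing convention fixes the sign in the exponent. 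Both sides are bounded operators: the left by Lemma~\ref{lem:weighted_laurent_realization}, and the right as a multiplication operator by a bounded continuous operator-valued function on $L^2(Y^\ast;\mathcal H_k)$. By unitarity of $\mathcal F_\lambda$ and density of finitely supported sequences, the identity extends to all of $\mathcal H_k^{\mathrm{sc}}$.

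\textbf{Spectrum.} The first equality, $\sigma(\mathcal T_k^{(\beta)})=\sigma(\widetilde{\mathcal T}_{k,\beta})$, is immediate because $\mathcal W_\beta$ is an isometric isomorphism conjugating the two operators. For the second, it suffices to show that the multiplication operator $M_T$ by a continuous $T:Y^\ast\to\mathcal L(X)$ has spectrum $\overline{\bigcup_\theta\sigma(T(\theta))}$. The union is already closed by upper semicontinuity and compactness, but keeping the closure is harmless.

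\begin{itemize}
\item If $z\notin\sigma(T(\theta))$ for every $\theta$, then $\theta\mapsto(T(\theta)-z)^{-1}$ is continuous, since inversion is continuous on the open set of invertibles. It is therefore bounded on the compact $Y^\ast$, and it yields a bounded inverse of $M_T-z$. The same argument works for $z$ outside the closure.
\item Conversely, take $z\in\sigma(T(\theta_0))$. Since $T(\theta_0)-z$ is not invertible, either it is not bounded below, or it is bounded below but has non-dense range.
\end{itemize}

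\textbf{Main obstacle.} The converse inclusion is the step I expect to need the most care, because for operator-valued symbols one cannot simply use pointwise eigenvalues. In the not-bounded-below case, I would take approximate kernel vectors $x_j$ with $\|(T(\theta_0)-z)x_j\|\to0$. I would then form $f_j(\theta)=c_\epsilon\chi_{|\theta-\theta_0|<\epsilon}(\theta)\,x_j$, normalized in $L^2$. By continuity of $T$, we get $\|(M_T-z)f_j\|\lesssim\|(T(\theta_0)-z)x_j\|+\sup_{|\theta-\theta_0|<\epsilon}\|T(\theta)-T(\theta_0)\|$, which is small. Hence $M_T-z$ is not bounded below. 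In the non-dense-range case, the adjoint $T(\theta_0)^\dagger-\bar z$ is not bounded below. I would apply the same construction to the adjoint multiplication operator $M_{T^\dagger}=M_T^\dagger$. This shows that $\bar z\in\sigma(M_T^\dagger)$, hence $z\in\sigma(M_T)$. Together with unitary invariance of the spectrum under $\mathcal F_\lambda$, this gives the stated identity.
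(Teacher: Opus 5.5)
Your proposal is correct and follows the paper's proof. Lemma~\ref{lem:weighted_laurent_realization} gives absolute, uniform convergence and justifies termwise differentiation for the $C^1$ claim. The conjugation identity is checked on finitely supported sequences and extended by density and boundedness. The first spectral equality comes from the similarity via $\mathcal W_\beta$.

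The one difference is in the second spectral equality. The paper cites the standard spectrum formula for decomposable operators plus continuity of the symbol. You instead prove the formula for the continuous multiplication operator directly, handling the non-dense-range case through the adjoint. This makes explicit the uniform bound on $\|(T(\theta)-z)^{-1}\|$ over the compact circle, which is exactly what non-normal operator-valued fibers require.
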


\begin{proof}
The absolute and uniform norm convergence of \eqref{eq:weighted_symbol_def} follows from Lemma~\ref{lem:weighted_laurent_realization}. The same lemma also gives
\[
\sum_{n\in\mathbb Z}|n|\lambda^{\beta n}\|B_n^k\|_{\mathcal L(\mathcal H_k)}<\infty,
\]
so termwise differentiation is justified and the symbol is \(C^1\).

Indeed,
\begin{align*}
&(\mathcal F_\lambda\widetilde{\mathcal T}_{k,\beta}\mathbf u)(\theta) =\sum_{\ell\in\mathbb Z}\sum_{m\in\mathbb Z} \lambda^{\beta(\ell-m)}B_{\ell-m}^k u_m e^{-i\ell\theta\log\lambda}\\
=&\sum_{n\in\mathbb Z}\lambda^{\beta n}B_n^k e^{-in\theta\log\lambda} \sum_{m\in\mathbb Z}u_m e^{-im\theta\log\lambda}=\widetilde{\mathcal T}_{k,\beta}(\theta)(\mathcal F_\lambda\mathbf u)(\theta).
\end{align*}
By density and boundedness, the identity extends to all of \(\mathcal H_k^{\mathrm{sc}}\). Hence,
\[
\mathcal F_\lambda\widetilde{\mathcal T}_{k,\beta}\mathcal F_\lambda^{-1}=\int_{Y^\ast}^{\oplus}\widetilde{\mathcal T}_{k,\beta}(\theta)\,\frac{\log\lambda}{2\pi}d\theta.
\]
The spectrum formula follows from the standard spectrum formula for decomposable operators. Since the symbol is norm-continuous on the compact set \(Y^\ast\), the essential closure of the fiber spectra equals the closure of the union of the fiber spectra. Finally, the equality of spectra for \(\mathcal T_k^{(\beta)}\) and \(\widetilde{\mathcal T}_{k,\beta}\) follows from the similarity
\[
\mathcal T_k^{(\beta)}=\mathcal W_\beta^{-1}\widetilde{\mathcal T}_{k,\beta}\mathcal W_\beta.
\]
\end{proof}

The weight \(\lambda^{\beta m}\) is part of the bounded realization on \(\mathcal H_{k,\beta}^{\mathrm{sc}}\) and evaluates the symbol at the complexified BFZ parameter \(\theta+i\beta\). 

The periodicity of the BFZ symbol, with period \(2\pi/\log\lambda\), is structurally analogous to the log-periodic factors in tube formulas for self-similar sets \cite{lapidus_radunovic_zubrinic2017distance,lapidus_radunovic_zubrinic2018tube}, which are multiplicatively periodic in scale: both reflect an underlying discrete dilation group. A closer connection can be explored if the scattering blocks \(B_n^k\) are replaced by scale-normalized Laplace-type blocks and suitable heat or resolvent traces are studied. Spectral zeta functions of fractal Laplacians then describe log-periodic spectral asymptotics, whereas distance and tube zeta functions recover the real tube volume
\[
V_F(\varepsilon):=\left|\left\{x:\operatorname{dist}(x,F)<\varepsilon\right\}\right|
\]
and its oscillatory dependence on \(\varepsilon\).
  
\subsection{Riesz reduction and winding of isolated spectral clusters}
\label{subsec:finite_dimensional_scale_bloch_winding}

We extract finite-dimensional topological invariants from invariant spectral subspaces of the BFZ fiber symbol. Throughout this subsection, \(k>0\), \(\eta>0\), \(\lambda>1\), and \(\beta\in(-1,0)\) are fixed. We write \(t:=\theta\log\lambda\in[0,2\pi]\). Thus, on the fundamental cell,
\[
\mathcal T_{k,\beta}^{\Gamma_0}(t) = \widetilde{\mathcal T}_{k,\beta}\!\left(\frac{t}{\log\lambda}\right).
\]

Let \(\Gamma\) be a smooth compact boundary for which there exist \(0<r_-<r_+\) such that
\[
\Gamma\subset\{x\in\mathbb R^3:r_-\le |x|\le r_+\}, \qquad r_+<\lambda r_-.
\]
Let \(\mathcal H_\Gamma:=\mathcal H_k(\Gamma)\) be defined by the same Laplace-Beltrami formula as in \eqref{eq:Hk}, and define \(B_n^{k,\Gamma}\) by the same reference-cell and interscale formulas as \(B_n^k\). Under this annular scale-separation condition, the proof of Proposition~\ref{prop:one_sided_Bn_bounds} gives
\begin{equation}
\label{eq:weighted_bfz_c1_condition_gamma}
\sum_{n\in\mathbb Z}(1+|n|)\lambda^{\beta n}\|B_n^{k,\Gamma}\|_{\mathcal L(\mathcal H_\Gamma)}<\infty.
\end{equation}
The weighted BFZ symbol is
\begin{equation}
\label{eq:weighted_bfz_symbol_gamma}
\mathcal T_{k,\beta}^{\Gamma}(t):=\sum_{n\in\mathbb Z}e^{-int}\lambda^{\beta n}B_n^{k,\Gamma},\qquad t\in\mathbb R/(2\pi\mathbb Z).
\end{equation}
The series converges absolutely in the operator norm, uniformly in \(t\), and defines a \(C^1\) loop on \(\mathcal H_\Gamma\).

To compare parameter-dependent spectral subspaces on a fixed finite-dimensional space, choose a bounded reference operator \(\mathcal T_{\mathrm{ref}}^\Gamma\in\mathcal L(\mathcal H_\Gamma)\) and a positively oriented contour \(\mathscr C\) enclosing an isolated spectral cluster whose Riesz projection has finite rank. Set
\begin{equation}
\label{eq:reference_riesz_projection_gamma}
\Pi^{\mathrm{ref}}:=\frac{1}{2\pi i}\int_{\mathscr C}(\zeta I-\mathcal T_{\mathrm{ref}}^\Gamma)^{-1}\,d\zeta,
\qquad E:=\operatorname{Ran}\Pi^{\mathrm{ref}}, \qquad 1\le\dim_{\mathbb C}E<\infty.
\end{equation}
The reference Riesz projection \(\Pi^{\mathrm{ref}}\) is not assumed orthogonal.
Assume that \(\mathscr C\subset\mathbb C\setminus\sigma(\mathcal T_{k,\beta}^{\Gamma}(t))\) for all \(t\), and define
\begin{equation}
\label{eq:moving_riesz_projection_gamma}
\Pi^\Gamma(t):=\frac{1}{2\pi i}\int_{\mathscr C}(\zeta I-\mathcal T_{k,\beta}^{\Gamma}(t))^{-1}\,d\zeta.
\end{equation}
The parameter-dependent Riesz projection maps the reference space \(E\) to the corresponding spectral subspace. We assume that its restriction
\[
U_E^\Gamma(t):=\Pi^\Gamma(t)|_E:E\longrightarrow\operatorname{Ran}\Pi^\Gamma(t)
\]
is an isomorphism. These conditions follow, in particular, when the BFZ loop is sufficiently close in norm to \(\mathcal T_{\mathrm{ref}}^\Gamma\); this is the situation verified below in Section~\ref{sec:capacitance_riesz_phases}. The Riesz-reduced BFZ symbol is
\begin{equation}
\label{eq:riesz_reduced_bfz_symbol_gamma}
M_E^{\Gamma}(t):=(U_E^\Gamma(t))^{-1}\mathcal T_{k,\beta}^{\Gamma}(t)|_{\operatorname{Ran}\Pi^\Gamma(t)}U_E^\Gamma(t)\in\mathcal L(E).
\end{equation}
Thus, \(M_E^\Gamma(t)\) is similar to the restriction of the full symbol to its invariant spectral subspace. It is a continuous periodic loop and is \(C^1\) under \eqref{eq:weighted_bfz_c1_condition_gamma}. Proposition~\ref{prop:finite_cluster_reduction_nearby_loops} below proves that the computable projected operator on the fixed reference space,
\(\Pi^{\mathrm{ref}}\mathcal T_{k,\beta}^{\Gamma}(t)|_E\)
approximates \(M_E^\Gamma(t)\)  with an error quadratic in the perturbation
\(\mathcal T_{k,\beta}^{\Gamma} - \mathcal T_{\mathrm{ref}}^\Gamma\). The restriction to \(\operatorname{Ran}\Pi^\Gamma(t)\), or equivalently, the similarity class of \(M_E^\Gamma(t)\), is intrinsic to the isolated cluster. In particular, its determinant loop and determinant winding are independent of the chosen frame.

We first fix the winding convention associated with increasing \(t\) from \(0\) to \(2\pi\).

\begin{definition}[Winding number of a scalar loop]
\label{def:scalar_winding_number}
Let \(f:[0,2\pi]\to\mathbb C\setminus\{0\}\) be a continuous loop, that is,  \(f(0)=f(2\pi)\). Its winding number around the origin is denoted by
\[
\operatorname{Wind}(f,0)\in\mathbb Z.
\]
If \(f\) is \(C^1\), then
\[
\operatorname{Wind}(f,0) = \frac{1}{2\pi i}\int_0^{2\pi}\frac{f'(t)}{f(t)}\,dt.
\]
\end{definition}

Applying scalar winding to the determinant loop gives the point-gap invariant used below.

\begin{definition}[Determinant winding]
\label{def:determinant_winding}
Let \(z_\ast\in\mathbb C\). We say that the Riesz-reduced loop \(M_E^\Gamma\) has a point gap at \(z_\ast\) if
\begin{equation}
\label{eq:det_point_gap_condition}
\det_E\bigl(M_E^\Gamma(t)-z_\ast I_E\bigr)\neq0,\qquad 0\le t\le2\pi.
\end{equation}
If \eqref{eq:det_point_gap_condition} holds, the determinant winding of \(M_E^\Gamma\) around \(z_\ast\) is
\[
W(M_E^\Gamma;z_\ast) := \operatorname{Wind}\left(\det_E\bigl(M_E^\Gamma(\cdot)-z_\ast I_E\bigr),0\right)\in\mathbb Z.
\]
Writing \(\sigma_{\min}\) for the smallest singular value, finite dimensionality and continuity imply
\[
\inf_{0\le t\le2\pi}\sigma_{\min}\bigl(M_E^\Gamma(t)-z_\ast I_E\bigr)>0.
\]
\end{definition}

\begin{remark}[Zak phase and point-gap winding]
\label{rem:zak_phase_and_point_gap_winding}
For comparison, suppose that the reduced loop \(M_E^\Gamma(t)\) has a periodic algebraically simple eigenvalue branch and that the BFZ boundary symbol admits corresponding periodic \(C^1\) right and left eigenvectors \(u_j^R(t)\) and \(u_j^L(t)\), normalized by
\[
(u_j^L(t))^\dagger u_j^R(t)=1.
\]
Its biorthogonal BFZ Zak phase is
\begin{equation}
\label{eq:early_bandwise_bfz_zak_phase}
\gamma_j^{\mathrm{BFZ}} :=-\operatorname{Im}\int_0^{2\pi} (u_j^L(t))^\dagger\partial_tu_j^R(t)\,dt \quad \pmod{2\pi}.
\end{equation}
Equivalently, differentiating the normalization gives
\[
\gamma_j^{\mathrm{BFZ}} \equiv\frac{i}{2}\int_0^{2\pi} \left((u_j^L)^\dagger\partial_tu_j^R +(u_j^R)^\dagger\partial_tu_j^L\right)dt \pmod{2\pi}.
\]
This phase measures the transport of an eigenvector line, whereas \(W(M_E^\Gamma;z_\ast)\) measures the winding of the determinant of the Riesz-reduced matrix about the point gap. Thus, the determinant winding does not require simple band or global eigenvector labeling. The relation between these quantities is discussed further in Section~\ref{subsec:symmetrization_zak_phase}.
\end{remark}

The next theorem gives the trace formula and the point-gap stability used in the following.

\begin{theorem}[Finite-dimensional winding and stability] 
\label{thm:finite_dimensional_winding_stability} 
Let \(E\) be a finite-dimensional complex Hilbert space, \(\dim_{\mathbb C}E=q\), and let \(M:[0,2\pi]\to\mathcal L(E)\) be a continuous loop, namely \(M(0)=M(2\pi)\). Fix \(z_\ast\in\mathbb C\). If \(M(t)-z_\ast I_E\) is invertible for all \(t\), define \(W(M;z_\ast)\) as in Definition~\ref{def:determinant_winding}. 
If \(M\) is \(C^1\), then
\begin{equation}
\label{eq:finite_dim_trace_winding_formula}
W(M;z_\ast)=\frac{1}{2\pi i}\int_0^{2\pi}\operatorname{tr}_E\left[(M(t)-z_\ast I_E)^{-1}\partial_tM(t)\right]\,dt.
\end{equation}
If \((s,t)\mapsto M_s(t)\) and \(s\mapsto z_s\) are continuous, each \(M_s\) is periodic, and
\[
M_s(t)-z_sI_E\quad\text{is invertible for every }(s,t)\in[0,1]\times[0,2\pi],
\]
then \(W(M_s;z_s)\) is independent of \(s\). In particular, let \(M_0,M_1:[0,2\pi]\to\mathcal L(E)\) be continuous loops and assume
\begin{equation}
\label{eq:finite_dim_stability_assumption}
\mu_0:=\inf_{0\le t\le2\pi}\sigma_{\min}\bigl(M_0(t)-z_\ast I_E\bigr)>0, \qquad \sup_{0\le t\le2\pi}\|M_1(t)-M_0(t)\|_{\mathcal L(E)}<\mu_0. 
\end{equation}
Then \(M_1(t)-z_\ast I_E\) is invertible for every \(t\), and 
\begin{equation}
\label{eq:finite_dim_stability_conclusion}
W(M_1;z_\ast)=W(M_0;z_\ast). 
\end{equation}
\end{theorem}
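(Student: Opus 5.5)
The plan has three parts: the trace formula via Jacobi's formula, homotopy invariance via uniform continuity of the argument, and the stability statement via the straight-line homotopy together with a Neumann-series bound.

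\emph{Trace formula.} Set $f(t):=\det_E(M(t)-z_\ast I_E)$. This is a $C^1$ periodic function that never vanishes, because $M(t)-z_\ast I_E$ is invertible for every $t$. Jacobi's formula for a $C^1$ family of invertible operators on the $q$-dimensional space $E$ gives
\[
f'(t)=f(t)\operatorname{tr}_E\bigl[(M(t)-z_\ast I_E)^{-1}\partial_tM(t)\bigr].
\]
Jacobi's formula can be checked in any basis of $E$, and the trace does not depend on the basis. Substituting this into the integral formula of Definition~\ref{def:scalar_winding_number} yields \eqref{eq:finite_dim_trace_winding_formula}.

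\emph{Homotopy invariance.} Set $F(s,t):=\det_E(M_s(t)-z_sI_E)$. It is continuous on the compact set $[0,1]\times[0,2\pi]$, nonzero there, and periodic in $t$. Hence $|F|\ge c>0$ on this set, and $F$ is uniformly continuous. Choose $\delta>0$ so that $|s-s'|<\delta$ implies $|F(s,t)-F(s',t)|<c$ for all $t$.

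For such $s,s'$, the quotient $F(s',t)/F(s,t)$ lies in the disc $\{|w-1|<1\}$. This disc avoids the nonpositive real axis, so the principal logarithm of the quotient is a continuous periodic function of $t$, and the loop $t\mapsto F(s',t)/F(s,t)$ has winding zero. Winding numbers add under products of loops, so $W(M_{s'};z_{s'})=W(M_s;z_s)$. Finitely many such steps cover $[0,1]$, which proves that $W(M_s;z_s)$ is independent of $s$.

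\emph{Stability.} Take $M_s(t):=(1-s)M_0(t)+sM_1(t)$ and $z_s:=z_\ast$. Then
\[
M_s(t)-z_\ast I=(M_0(t)-z_\ast I)\bigl[I+s(M_0(t)-z_\ast I)^{-1}(M_1(t)-M_0(t))\bigr].
\]
The operator norm of $(M_0(t)-z_\ast I)^{-1}$ equals $1/\sigma_{\min}(M_0(t)-z_\ast I)\le 1/\mu_0$. Combined with \eqref{eq:finite_dim_stability_assumption}, this makes the bracketed correction have norm at most $\mu_0^{-1}\sup_t\|M_1(t)-M_0(t)\|<1$. By the Neumann series, each $M_s(t)-z_\ast I$ is invertible; the case $s=1$ gives the invertibility of $M_1(t)-z_\ast I$. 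The homotopy part then gives \eqref{eq:finite_dim_stability_conclusion}.

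The only mildly delicate step is the homotopy invariance. It is handled by the uniform positive lower bound on $|F|$ together with uniform continuity, which lets us compare nearby loops through a ratio that stays in a disc where a continuous logarithm exists. No deformation-retract arguments are needed. Everything else is routine finite-dimensional linear algebra.
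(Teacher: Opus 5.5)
Your proposal is correct and follows the paper's proof essentially step for step: Jacobi's formula for the trace identity, homotopy invariance of the nonvanishing determinant loops, and the straight-line homotopy with the factorization $(M_0-z_\ast I)\bigl[I+s(M_0-z_\ast I)^{-1}(M_1-M_0)\bigr]$ and the Neumann-series bound for stability. The only difference is that you prove the homotopy invariance of scalar winding directly, via uniform continuity and the principal logarithm of the ratio of nearby loops, whereas the paper simply cites it as a standard fact about homotopies in $\mathbb C\setminus\{0\}$.
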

 
\begin{proof}
Since \(M(t)-z_\ast I_E\) is invertible for every \(t\), the scalar loop
\[
t\mapsto \det_E\bigl(M(t)-z_\ast I_E\bigr)
\]
takes values in \(\mathbb C\setminus\{0\}\). Hence \(W(M;z_\ast)\) is well-defined.

If \(M\) is \(C^1\), Jacobi's formula gives
\[
\partial_t\det_E\bigl(M(t)-z_\ast I_E\bigr)=\det_E\bigl(M(t)-z_\ast I_E\bigr)\operatorname{tr}_E\left[\bigl(M(t)-z_\ast I_E\bigr)^{-1}\partial_tM(t)\right].
\]
Dividing by the determinant and integrating over one period gives \eqref{eq:finite_dim_trace_winding_formula}.

For a point-gapped family \((M_s,z_s)\), the scalar loops
\(\det_E(M_s(\cdot)-z_sI_E)\) form a homotopy in \(\mathbb C\setminus\{0\}\). Their winding is therefore independent of \(s\).

Under \eqref{eq:finite_dim_stability_assumption}, define
\[
M_s(t):=M_0(t)+s\bigl(M_1(t)-M_0(t)\bigr), \qquad 0\le s\le1.
\]
Then
\[
M_s(t)-z_\ast I_E=\bigl(M_0(t)-z_\ast I_E\bigr)\left[I_E+s\bigl(M_0(t)-z_\ast I_E\bigr)^{-1}\bigl(M_1(t)-M_0(t)\bigr)\right].
\]
The bracketed factor is invertible because, by \eqref{eq:finite_dim_stability_assumption},
\[
s\left\|\bigl(M_0(t)-z_\ast I_E\bigr)^{-1}\bigl(M_1(t)-M_0(t)\bigr)\right\|\le\mu_0^{-1}\sup_t\|M_1(t)-M_0(t)\|<1.
\]
Therefore, \(M_s(t)-z_\ast I_E\) is invertible for every \(s,t\). The determinant loops form a homotopy in \(\mathbb C\setminus\{0\}\), so
\[
W(M_1;z_\ast)=W(M_0;z_\ast).
\]
\end{proof}

Thus, a continuous deformation of the reduced loop preserves the winding, provided that the chosen point gap remains open throughout the deformation. For a shape-induced deformation, the isolated Riesz cluster must also persist.

The symmetry decomposition used later is a special case of the following standard observation.

\begin{remark}[Symmetry-sector winding]
Let a finite group act on a finite-dimensional complex Hilbert space \(E\), with isotypic decomposition
\[
E\cong\bigoplus_\alpha \mathcal V_\alpha\otimes\mathbb C^{m_\alpha}.
\]
If a continuous loop \(L(t)\) commutes with this action, Schur's lemma gives
\[
L(t)\cong\bigoplus_\alpha I_{\mathcal V_\alpha}\otimes L_\alpha(t).
\]
Whenever \(z\) is a point gap, the determinant multiplicativity yields
\[
W(L;z)=\sum_\alpha\dim(\mathcal V_\alpha)W(L_\alpha;z).
\]
For the simplex permutation representation used in the following, the two irreducible subspaces are the symmetric line and the \((M-1)\)-dimensional sum-zero space, namely
\[
\operatorname{span}\{(1,\ldots,1)\}\qquad\text{and}\qquad\left\{c\in\mathbb C^M:\sum_i c_i=0\right\}.
\]
Each irreducible type occurs once: the trivial representation has dimension \(1\), and the standard representation has dimension \(M-1\). Thus, the two reduced sector loops are scalar, which recovers the two-sector factorization without replacing the explicit simplex calculation.
\end{remark}

\subsection{Finite-depth local data and multiple dilation centers}
\label{subsec:finite_winding_vectors}

We now explain how local determinant winding data can be attached to smooth finite-depth approximations generated by an iterative system. The construction is conditional: the local reduced BFZ loops must exist and have point gaps at the chosen reference points.

Let \(I=\{1,\dots,M\}\), and let
\[
T_i(x)=\lambda^{-1}x+b_i,\qquad i\in I,
\]
with common contraction ratio \(\lambda^{-1}\).

Fix \(\beta\in(-1,0)\), and let \(\kappa>0\) denote the effective wavenumber in a reference cell. Let \(\mathcal R\) index a finite collection of local reduced models. For each \(r\in\mathcal R\), choose a smooth reference boundary \(\Gamma_{r,0}\) contained in one annular scale cell, a possibly \(\kappa\)-dependent reference operator with an isolated spectral cluster enclosed by a contour \(\mathscr C_r\), and a reference point \(z_r\in\mathbb C\). We assume that, under reference-boundary identification, the range of the reference Riesz projection is a fixed finite-dimensional space \(E_r\subset C^\infty(\Gamma_{r,0})\). Thus, each index \(r\) specifies one local reduced model that can be copied throughout the prefractal construction.

Let \(\mathcal T_{r,\kappa,\beta}(t)\) be the scale-periodic BFZ symbol associated with \(\Gamma_{r,0}\). Assume that \(\mathscr C_r\) remains in its resolvent set and that its Riesz projection restricts to an isomorphism from \(E_r\) onto the range of the parameter-dependent Riesz projection. Applying the construction of Subsection~\ref{subsec:finite_dimensional_scale_bloch_winding}, denote the resulting reduced loop on \(E_r\) by \(M_r(\kappa,t)\).
Different elements of \(\mathcal R\) may have the same reference boundary but different isolated clusters. Assume that, for each \(r\in\mathcal R\),
\begin{equation}
\label{eq:local_point_gap_condition_general}
\det_{E_r}\bigl(M_r(\kappa,t)-z_r I_{E_r}\bigr)\ne0, \qquad 0\le t\le2\pi.
\end{equation}
Then define the local determinant winding
\begin{equation}
\label{eq:local_determinant_winding_general}
W_r(\kappa;z_r):= \operatorname{Wind}\left( \det_{E_r}\bigl(M_r(\kappa,\cdot)-z_r I_{E_r}\bigr),0 \right).
\end{equation}

For a finite depth \(N\), let \(m_{\ell r}\in\mathbb N_0\) be the number of prescribed copies of local type \(r\) at level \(\ell\). Whenever the local point-gap conditions are satisfied, define
\begin{equation}
\label{eq:prefractal_winding_data_general}
W_\ell^{\mathrm{loc}}(\kappa) :=\operatorname{Wind}\left(\prod_{r\in\mathcal R}\det_{E_r}\bigl(M_r(\kappa,\cdot)-z_rI_{E_r}\bigr)^{m_{\ell r}},0\right),
\qquad
\mathbf W_N^{\mathrm{loc}}(\kappa):=\bigl(W_0^{\mathrm{loc}}(\kappa),\ldots,W_N^{\mathrm{loc}}(\kappa)\bigr).
\end{equation}
Equivalently, with zero-multiplicity summands omitted, the product in \eqref{eq:prefractal_winding_data_general} is the determinant of
\[
\bigoplus_{r\in\mathcal R}I_{m_{\ell r}}\otimes\bigl(M_r(\kappa,t)-z_rI_{E_r}\bigr).
\]
This direct sum represents finitely many uncoupled local reference loops, not a Riesz reduction of the fully coupled prefractal operator; the latter identification requires a separate estimate controlling the coupling among the local blocks while preserving the relevant point gap.

Under \eqref{eq:local_point_gap_condition_general}, the determinant multiplicativity gives the finite-dimensional identity
\begin{equation}
\label{eq:general_prefractal_winding_formula}
\mathbf W_N^{\mathrm{loc}}(\kappa) = \left(\sum_{r\in\mathcal R}m_{\ell r}W_r(\kappa;z_r)\right)_{\ell=0}^{N},
\end{equation}
for the finite direct sum of uncoupled local blocks. 

\begin{remark}[Finite truncations and weighted determinants]
\label{rem:weighted_infinite_depth_determinant}
The finite-dimensional trace formula has a monodromy interpretation. If \(U'(t)=U(t)\mathcal B(t)\), then
\[
\det U(t)=\det U(0)\exp\left(\int_0^t\operatorname{tr}\mathcal B(s)\,ds\right).
\]
For a point-gapped loop \(A(t)=M(t)-z_\ast I\), taking \(\mathcal B=A^{-1}A'\), the trace satisfies
\[
\operatorname{tr}_E(A(t)^{-1}A'(t))=\frac{(\det_E A(t))'}{\det_E A(t)}.
\]
Then the traced Maurer-Cartan form measures the logarithmic monodromy of \(\det A\):
\[
\frac{1}{2\pi i}\int_0^{2\pi}\operatorname{tr}\bigl(A(t)^{-1}A'(t)\bigr)\,dt=W(M;z_\ast).
\]
Thus, the trace formula \eqref{eq:finite_dim_trace_winding_formula} is the period of the traced Maurer-Cartan form.

At infinite depth, one may formally introduce the weighted diagonal functional
\[
\tau_c(B):=\sum_{\ell\ge0}c_\ell\operatorname{tr}_{\mathscr E_\ell}(P_\ell BP_\ell),
\]
whenever every diagonal block is trace class and the series converges absolutely. Writing \(P_{\le N}:=\sum_{\ell=0}^N P_\ell\), the hard cutoff \(c_\ell=\mathbf 1_{\{\ell\le N\}}\) gives
\[
\tau_c(B)=\operatorname{Tr}(P_{\le N}BP_{\le N}),
\]
whenever this compression is trace class. A determinant-like quantity \(\exp\bigl(\tau_c(\log(I+A))\bigr)\) could be introduced only after specifying a class of operators on which \(\tau_c\) is cyclic and for which a chosen logarithm belongs to its domain.  
\end{remark}

The next corollary combines the homotopy invariance of the local loops with the geometric scaling of the wavenumber across levels.

\begin{corollary}[Winding along a geometric wavenumber sequence]
\label{cor:general_frequency_ladder_winding}
Let
\[
\mathcal I_\kappa=[\kappa_-,\kappa_+]\Subset(0,\infty).
\]
Assume that, for every \(r\in\mathcal R\), the map \((\kappa,t)\mapsto M_r(\kappa,t)\) is continuous in the operator norm on \(\mathcal I_\kappa\times[0,2\pi]\), and that
\[
\inf_{\kappa\in\mathcal I_\kappa,\ 0\le t\le2\pi}\sigma_{\min}\bigl(M_r(\kappa,t)-z_rI_{E_r}\bigr)>0.
\]
Then \(W_r(\kappa;z_r)\) is constant for \(\kappa\in\mathcal I_\kappa\).

Fix \(\kappa_\circ\in\mathcal I_\kappa\). For each \(0\le j\le N\), set \(k_j:=\lambda^j\kappa_\circ\). Since a level-\(j\) cell has an effective reference-cell wavenumber \(k_j\lambda^{-j}=\kappa_\circ\), its contribution at the physical wavenumber \(k_j\) is
\[
\sum_{r\in\mathcal R}m_{jr}W_r(\kappa_\circ;z_r).
\]
This is a level-resolved local contribution, not the determinant winding of the full depth-\(N\) prefractal at \(k_j\). Other levels have different effective wavenumbers and are not controlled by this statement.
\end{corollary}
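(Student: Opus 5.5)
The first claim follows from the homotopy invariance part of Theorem~\ref{thm:finite_dimensional_winding_stability}. Fix \(r\in\mathcal R\) and take any \(\kappa_1,\kappa_2\in\mathcal I_\kappa\). Define the deformation parameter \(s\in[0,1]\) by \(\kappa(s):=(1-s)\kappa_1+s\kappa_2\in\mathcal I_\kappa\), and set \(M_s(t):=M_r(\kappa(s),t)\) with constant reference point \(z_s:=z_r\). By hypothesis, \((s,t)\mapsto M_s(t)\) is norm-continuous on \([0,1]\times[0,2\pi]\). Each \(M_s\) is periodic in \(t\), because \(M_r(\kappa,\cdot)\) is the Riesz-reduced loop of the \(2\pi\)-periodic BFZ symbol. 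The uniform lower bound on \(\sigma_{\min}\) gives invertibility of \(M_s(t)-z_rI_{E_r}\) for all \((s,t)\). Theorem~\ref{thm:finite_dimensional_winding_stability} then yields \(W_r(\kappa_1;z_r)=W_r(\kappa_2;z_r)\). Since \(\mathcal I_\kappa\) is an interval, it is path-connected, so this proves constancy.

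Alternatively, I would note that \((\kappa,t)\mapsto\det_{E_r}(M_r(\kappa,t)-z_rI_{E_r})\) is a continuous map into \(\mathbb C\setminus\{0\}\) on the compact set \(\mathcal I_\kappa\times[0,2\pi]\). Its winding along \(t\) is therefore locally constant in \(\kappa\), and hence constant on the connected set \(\mathcal I_\kappa\). Only norm-continuity is used here, which makes the determinant continuous in finite dimensions.

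For the second part, the plan is mainly bookkeeping with the scaling law. By Lemma~\ref{lem:G_scaling_prelim}, in the homogeneous form \(G_k(a x,a y)=a^{2-d}G_{ak}(x,y)\) with \(d=3\), a level-\(j\) cell, which is a copy scaled by \(\lambda^{-j}\), pulls back to the reference cell. At physical wavenumber \(k\), this pullback produces the reference-cell operators at effective wavenumber \(k\lambda^{-j}\), up to the equilibrium-density normalization already built into \(B_n^k\). With \(k=k_j=\lambda^j\kappa_\circ\), the effective wavenumber is \(\kappa_\circ\in\mathcal I_\kappa\). So each of the \(m_{jr}\) copies of type \(r\) at level \(j\) carries the local loop \(M_r(\kappa_\circ,\cdot)\), and its point gap at \(z_r\) holds by the uniform singular-value hypothesis.

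Applying \eqref{eq:prefractal_winding_data_general} and the multiplicativity identity \eqref{eq:general_prefractal_winding_formula} at level \(j\) then gives the contribution \(\sum_r m_{jr}W_r(\kappa_\circ;z_r)\). By the first part, this value does not depend on which \(\kappa_\circ\in\mathcal I_\kappa\) is chosen.

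The main point requiring care is this identification. The scaling lemma must be applied to the normalized blocks, so that the \(a^{2-d}\) factor and the density normalization \(\mu_n=\lambda^{-n}\varphi\) cancel, which leaves exactly the reference loop at wavenumber \(\kappa_\circ\) rather than a scalar multiple of it. A scalar multiple would shift the relevant point gap, so this cancellation has to be checked explicitly. Beyond that, the statement asserts nothing about other levels or the coupled operator, so no further estimate is needed.
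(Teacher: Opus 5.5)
Your proposal is correct and follows the paper's proof. Constancy in \(\kappa\) comes from the homotopy through nonvanishing determinant loops guaranteed by the uniform point gap, and the level-\(j\) identification comes from the Green-function scaling with the equilibrium-density normalization \(\mu(b+s_jy)=s_j^{-1}\varphi(y)\), \(s_j=\lambda^{-j}\), followed by \eqref{eq:general_prefractal_winding_formula}. The cancellation you flag is what the paper checks: using also \(\partial_{\nu_{b+s_jx}}G_{k_j}(b+s_jx,b+s_jy)=s_j^{-2}\partial_{\nu_x}G_{s_jk_j}(x,y)\) and rescaling the output by \(s_j\) as well, the level-\(j\) block at \(k_j\) is conjugate (not merely proportional) to the reference block at \(s_jk_j=\kappa_\circ\), so contours, Riesz reductions and point gaps transfer unchanged.
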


\begin{proof}
The uniform point gaps give a homotopy in \(\kappa\) through nonvanishing determinant loops. To justify the effective wavenumber, put \(s_j:=\lambda^{-j}\) and let \(b\) be the translation locating a level-\(j\) cell. The Green-function identities
\[
G_{k_j}(b+s_jx,b+s_jy)=s_j^{-1}G_{s_jk_j}(x,y), \qquad \partial_{\nu_{b+s_jx}}G_{k_j}(b+s_jx,b+s_jy)=s_j^{-2}\partial_{\nu_x}G_{s_jk_j}(x,y),
\]
together with the equilibrium-density normalization \(\mu(b+s_jy)=s_j^{-1}\varphi(y)\), show that the combined-field block on the level-\(j\) cell at wavenumber \(k_j\) is conjugate to the reference-cell block at wavenumber \(s_jk_j\). The same calculation with \(y\) replaced by \(\lambda^ny\) applies to every relative-scale block, and hence to the BFZ symbol and its Riesz reduction. Since \(s_jk_j=\kappa_\circ\), \eqref{eq:general_prefractal_winding_formula} gives the displayed level formula.
\end{proof}

The local vector above records level multiplicities obtained by copying prescribed local block types along a finite iterative system. A different finite collection of winding data is obtained by fixing several dilation centers. Each center gives a separate scale-periodic extension. For example, in a Sierpiński-type triangle, one may base the construction at any of the three fixed vertices. The resulting loops remain independent one-dimensional BFZ loops; they do not form a higher-dimensional Brillouin torus.

Fix \(\lambda>1\), \(k>0\), \(\eta>0\), and \(\beta\in(-1,0)\). Let \(\xi\in\mathbb R^3\). We call \(\xi\) an admissible dilation center if there exist a smooth compact reference boundary \(\Gamma_{\xi,0}\) and radii \(r_{\xi,-},r_{\xi,+}\) satisfying
\[
\Gamma_{\xi,0}\subset\{x\in\mathbb R^3:r_{\xi,-}\le |x-\xi|\le r_{\xi,+}\}, \qquad 0<r_{\xi,-}<r_{\xi,+}<\lambda r_{\xi,-}.
\]

The corresponding scale-periodic boundary centered at \(\xi\) is
\[
\Gamma_\xi^\infty:=\bigsqcup_{m\in\mathbb Z}\Gamma_{\xi,m},\qquad \Gamma_{\xi,m}:=\xi+\lambda^m(\Gamma_{\xi,0}-\xi).
\]
We use the same equilibrium-density scaling and reverse boundary index as in Subsection~\ref{subsec:blocks_Laurent}.
For \(n\neq0\), define the interscale block associated with \(\xi\) on \(\Gamma_{\xi,0}\) by
\[
(B_{\xi,n}^k\varphi)(x):=\lambda^n\int_{\Gamma_{\xi,0}}\mathsf K_\eta^k\bigl(x,\xi+\lambda^n(y-\xi)\bigr)\varphi(y)\,d\sigma(y), \qquad x\in\Gamma_{\xi,0}.
\]
 
For \(n=0\), set \(B_{\xi,0}^k:=B_0^{k,\Gamma_{\xi,0}}\).
The annular separation condition implies that
\[
\operatorname{dist}(\Gamma_{\xi,0},\Gamma_{\xi,n})>0, \qquad n\neq0,
\]
and the same one-sided estimates as in Proposition~\ref{prop:one_sided_Bn_bounds} give the weighted summability estimate
\[
\sum_{n\in\mathbb Z}\lambda^{\beta n}\|B_{\xi,n}^k\|_{\mathcal L(\mathcal H_k(\Gamma_{\xi,0}))}<\infty.
\]
Therefore, the BFZ symbol associated with the dilation center \(\xi\),
\[
\mathcal T_{\xi,k,\beta}(t):=\sum_{n\in\mathbb Z}e^{-int}\lambda^{\beta n}B_{\xi,n}^k, \qquad 0\le t\le2\pi,
\]
is a norm-continuous operator-valued loop on \(\mathcal H_k(\Gamma_{\xi,0})\).

Let \(\Xi\) be a finite set of dilation centers. For each \(\xi\in\Xi\), choose a reference operator with an isolated spectral cluster inside a contour \(\mathscr C_\xi\), and let \(E_\xi\) be the range of its Riesz projection. Assume that \(\mathscr C_\xi\) remains in the resolvent set of \(\mathcal T_{\xi,k,\beta}(t)\) and that the associated Riesz projection restricts to an isomorphism from \(E_\xi\) onto the range of the parameter-dependent Riesz projection. Applying the construction of Subsection~\ref{subsec:finite_dimensional_scale_bloch_winding}, denote the resulting reduced loop on \(E_\xi\) by \(M_\xi(t)\).

We collect these independent one-dimensional invariants in a finite vector.

\begin{definition}[Windings for prescribed dilation centers]
\label{def:directional_winding_vector}
For each \(\xi\in\Xi\), fix \(z_\xi\in\mathbb C\). Assume that
\[
\det_{E_\xi}\bigl(M_\xi(t)-z_\xi I_{E_\xi}\bigr)\neq0,\qquad 0\le t\le2\pi.
\]
The winding associated with \(\xi\) is
\[
W_\xi(z_\xi):=\operatorname{Wind}\left(\det_{E_\xi}\bigl(M_\xi(\cdot)-z_\xi I_{E_\xi}\bigr),0\right).
\]
The corresponding center-indexed winding vector is
\[
\mathbf W_{\Xi}:=\bigl(W_\xi(z_\xi)\bigr)_{\xi\in\Xi}.
\]
\end{definition}

Theorem~\ref{thm:finite_dimensional_winding_stability} applies to each component of \(\mathbf W_\Xi\). These entries belong to independent one-dimensional BFZ loops based at different dilation centers; they are not coordinates of a higher-dimensional Brillouin torus or a coupled multi-center invariant.

\section{Finite-dimensional projection, Riesz reduction, and BFZ phases}
\label{sec:capacitance_riesz_phases}

The preceding section provides the general BFZ winding framework. We now project the BFZ symbol onto the static kernel spanned by the isolated-component equilibrium densities, compute the winding of the resulting projected matrix for a regular simplex, compare it with the invariant Riesz reduction, and study the associated geometric phases. 

\subsection{Calder\'on identity and the static kernel pairing}
\label{subsec:calderon_capacitance_pairing}

Recall the layer-potential operators \(S_k^\Gamma\), \(K_k^\Gamma\), and \((K_k^\Gamma)^\ast\) defined in Section~\ref{subsec:boundary_formulation_one_scale_cell}. 
For \(f\in H^{1/2}(\Gamma)\) and \(g\in H^{-1/2}(\Gamma)\), let \(\langle f,g\rangle_{\mathrm{bil}}\) denote the complex-bilinear \(H^{1/2}(\Gamma)\)-\(H^{-1/2}(\Gamma)\) duality pairing; for smooth functions it equals \(\int_\Gamma fg\,d\sigma\). If \(T\in\mathcal L(H^{1/2}(\Gamma))\), its bilinear transpose \(T^{\mathsf T}\in\mathcal L(H^{-1/2}(\Gamma))\) is defined by
\[
\langle Tf,g\rangle_{\mathrm{bil}}=\langle f,T^{\mathsf T}g\rangle_{\mathrm{bil}}.
\]
It is distinct from the Hilbert adjoint \({}^{\dagger}\). On a smooth boundary, the layer-potential operators act on all Sobolev orders used below, so identities proved for smooth functions extend by continuity to compatible trace spaces.

The next proposition records the Calder\'on identity used to identify the right kernel and the kernel of the transpose at zero wavenumber.

\begin{proposition}[Calder\'on identity]
\label{prop:one_cell_calderon_identity}
Let \(k\ge0\), \(\eta>0\), and let \(\Gamma\) be a smooth compact boundary. Then
\begin{equation}
\label{eq:one_cell_calderon_identities}
(S_k^\Gamma)^{\mathsf T}=S_k^\Gamma, \qquad \bigl((K_k^\Gamma)^\ast\bigr)^{\mathsf T}=K_k^\Gamma, \qquad S_k^\Gamma(K_k^\Gamma)^\ast=K_k^\Gamma S_k^\Gamma.
\end{equation}
Consequently, the reference-cell operator \(B_0^{k,\Gamma}\) in \eqref{eq:combined_field_operator} satisfies
\begin{equation}
\label{eq:combined_field_calderon_identity}
S_k^\Gamma B_0^{k,\Gamma}=(B_0^{k,\Gamma})^{\mathsf T}S_k^\Gamma.
\end{equation}
\end{proposition}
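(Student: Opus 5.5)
Each identity will be verified at the kernel level for smooth densities and then extended to the stated trace spaces by density and continuity, as the paragraph preceding the proposition allows. The composite identity \eqref{eq:combined_field_calderon_identity} will then follow from the three basic ones by linearity.

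\emph{Transpose identities.} The outgoing Green function \(G_k(x,y)\) in \eqref{eq:Gk_prelim} depends only on \(|x-y|\), so it is symmetric: \(G_k(x,y)=G_k(y,x)\). Fubini on smooth \(f,g\) then gives \(\langle S_k^\Gamma f,g\rangle_{\mathrm{bil}}=\langle f,S_k^\Gamma g\rangle_{\mathrm{bil}}\), that is, \((S_k^\Gamma)^{\mathsf T}=S_k^\Gamma\). The pairing is bilinear, so no complex conjugation enters; this is exactly why \(\mathsf T\) and not \({}^\dagger\) is used. For \((K_k^\Gamma)^\ast\) the kernel is \(\partial_{\nu_x}G_k(x,y)\). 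Exchanging the roles of \(x\) and \(y\) turns it into \(\partial_{\nu_y}G_k(y,x)=\partial_{\nu_y}G_k(x,y)\), which is the kernel of \(K_k^\Gamma\). On a \(C^\infty\) surface this kernel is weakly singular, of size \(O(|x-y|^{-1})\) on a two-dimensional surface. The principal values are therefore absolutely convergent integrals, and Fubini applies. This gives \(((K_k^\Gamma)^\ast)^{\mathsf T}=K_k^\Gamma\).

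\emph{Commutation identity.} This is the Calder\'on relation \(S_kK_k^\ast=K_kS_k\). I would derive it from Green's representation formula. For smooth \(\mu\), set \(u=\mathcal S_k^\Gamma\mu\), the single-layer potential. It solves the Helmholtz equation in the interior domain \(D\) bounded by \(\Gamma\). The jump relations, with the sign convention used for \(B_0^{k,\Gamma}\), give
\[
u|_\Gamma=S_k\mu,\qquad \partial_\nu u|_-=\bigl(-\tfrac12I+K_k^\ast\bigr)\mu.
\]
Green's formula in \(D\) reads \(u=\mathcal S_k(\partial_\nu u|_-)-\mathcal D_k(u|_\Gamma)\). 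Taking the interior trace, with \(\mathcal D_k f|_-=(-\tfrac12 I+K_k)f\), yields
\[
S_k\mu=S_k\bigl(-\tfrac12I+K_k^\ast\bigr)\mu-\bigl(-\tfrac12I+K_k\bigr)S_k\mu .
\]
The \(\pm\frac12 S_k\mu\) terms cancel, leaving \(S_kK_k^\ast\mu=K_kS_k\mu\). Interior wavenumbers play no role, since the representation formula needs no uniqueness. For \(k=0\) the same computation holds with the Laplace kernel. If \(\Gamma\) has several components, \(D\) is taken as the union of the interiors and the argument is unchanged.

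\emph{Combined identity.} The three identities give
\[
(B_0^{k,\Gamma})^{\mathsf T}=\tfrac12I+K_k-i\eta kS_k .
\]
Hence
\[
S_kB_0^{k,\Gamma}=\tfrac12S_k+S_kK_k^\ast-i\eta kS_k^2=\tfrac12S_k+K_kS_k-i\eta kS_k^2=(B_0^{k,\Gamma})^{\mathsf T}S_k .
\]
Density of smooth functions and the boundedness of all layer operators on the relevant Sobolev spaces extend every identity to \(H^{\pm1/2}(\Gamma)\).

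The main obstacle is keeping the jump-relation sign conventions consistent with the outward normal and with the paper's definition of \(K^\ast\) through \(\partial_{\nu_x}\). A sign slip would replace \(K_kS_k\) by \(-K_kS_k\). I would check the convention once on the \(k=0\) unit sphere, where \(K_0^\ast\) acts on constants by a known multiple.
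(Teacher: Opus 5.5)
Your transpose identities, the algebra for the combined identity, and the density extension all match the paper. Your remark that the kernel of $(K_k^\Gamma)^\ast$ is $O(|x-y|^{-1})$, so that Fubini applies to the principal value, is a useful detail the paper omits.

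The paper simply quotes $S_k^\Gamma(K_k^\Gamma)^\ast=K_k^\Gamma S_k^\Gamma$ as the Calder\'on symmetrization identity. Your Green-formula derivation of it fails as written. With the paper's normalization $(-\Delta-k^2)G_k=\delta_y$, so that $G_0=1/(4\pi|x-y|)$, and the outward normal, the single-layer potential satisfies $\partial_\nu\mathcal S_k^\Gamma\mu|_{\pm}=\bigl(\mp\tfrac12I+(K_k^\Gamma)^\ast\bigr)\mu$. The interior normal trace therefore carries $+\tfrac12$. This is what the paper means by ``the plus sign is the interior normal-trace sign,'' and it is used again in the proof of Corollary~\ref{cor:static_capacitance_pairing}.

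Your $\partial_\nu u|_-=(-\tfrac12I+K_k^\ast)\mu$ is the jump relation for the opposite normalization $-1/(4\pi|x-y|)$. By contrast, your representation formula $u=\mathcal S_k(\partial_\nu u|_-)-\mathcal D_k(u|_\Gamma)$ and $\mathcal D_kf|_-=(-\tfrac12I+K_k)f$ are correct for the paper's normalization. When you expand your displayed equation, the two halves cancel each other and the left-hand side survives:
\[
S_k\mu=S_kK_k^\ast\mu-K_kS_k\mu .
\]
You read this as $S_kK_k^\ast\mu=K_kS_k\mu$ by dropping $S_k\mu$. The relation is in fact false. For $k=0$ on the unit sphere with $\mu=1$, one has $S_01=1$ and $K_01=K_0^\ast1=-\tfrac12$, so the left side is $1$ and the right side is $0$. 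So the effect of a sign slip is not $K_kS_k\mapsto-K_kS_k$, as you anticipated. It is the loss of the cancellation against the left-hand side.

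The repair is local. With the correct interior trace,
\[
S_k\mu=S_k\bigl(\tfrac12I+K_k^\ast\bigr)\mu-\bigl(-\tfrac12I+K_k\bigr)S_k\mu=S_k\mu+S_kK_k^\ast\mu-K_kS_k\mu .
\]
The two halves add up to $S_k\mu$, cancel the left-hand side, and leave $S_kK_k^\ast\mu=K_kS_k\mu$.

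Once corrected, your route is more self-contained than the paper's citation. As you note, it needs no uniqueness, so it covers interior Dirichlet eigenvalues and $k=0$ alike. For several components, take $D$ to be the bounded open set with $\partial D=\Gamma$ whose outward normal is the given one. A ``union of interiors'' is ambiguous for nested surfaces.
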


\begin{proof}
For smooth \(\varphi\) and \(\chi\), Fubini's theorem and the symmetry \(G_k(x,y)=G_k(y,x)\) give
\[
\begin{aligned}
\langle S_k^\Gamma\varphi,\chi\rangle_{\mathrm{bil}}&=\int_\Gamma\!\int_\Gamma G_k(x,y)\varphi(y)\chi(x)\,d\sigma(y)d\sigma(x)=\langle\varphi,S_k^\Gamma\chi\rangle_{\mathrm{bil}},\\
\langle (K_k^\Gamma)^\ast\varphi,\chi\rangle_{\mathrm{bil}} &=\int_\Gamma\!\int_\Gamma \partial_{\nu_x}G_k(x,y)\varphi(y)\chi(x)\,d\sigma(y)d\sigma(x) =\langle\varphi,K_k^\Gamma\chi\rangle_{\mathrm{bil}}.
\end{aligned}
\]
This proves the first two identities in \eqref{eq:one_cell_calderon_identities}. The Calder\'on symmetrization identity gives
\[
S_k^\Gamma(K_k^\Gamma)^\ast=K_k^\Gamma S_k^\Gamma.
\]
Since the bilinear transpose does not conjugate scalar coefficients,
\[
(B_0^{k,\Gamma})^{\mathsf T} =\frac12I+K_k^\Gamma-i\eta kS_k^\Gamma \quad\text{in }\mathcal L(H^{-1/2}(\Gamma)).
\]
Consequently,
\[
S_k^\Gamma B_0^{k,\Gamma} = \frac12S_k^\Gamma+K_k^\Gamma S_k^\Gamma-i\eta k(S_k^\Gamma)^2 = (B_0^{k,\Gamma})^{\mathsf T}S_k^\Gamma.
\]
\end{proof}

At zero wavenumber, the Calder\'on identity, the jump relation, and standard potential-theoretic properties of the single-layer operator identify the Riesz projection onto the static kernel used in the finite-dimensional reduction.

\begin{corollary}[Riesz projection onto the static kernel and dual pairing]
\label{cor:static_capacitance_pairing}
Let \(D\subset\mathbb R^3\) be bounded and smooth with a connected boundary \(\Gamma=\partial D\), and let \(\psi\in H^{-1/2}(\Gamma)\) be determined by \(S_0^\Gamma\psi=\mathbf1_\Gamma\). Smooth-boundary elliptic regularity gives \(\psi\in C^\infty(\Gamma)\). Then
\begin{equation}
\label{eq:static_capacitance_kernel_cokernel}
B_0^{0,\Gamma}\psi=0, \qquad (B_0^{0,\Gamma})^{\mathsf T}\mathbf1_\Gamma=0.
\end{equation}
More precisely,
\[
\ker_{H^{1/2}(\Gamma)}B_0^{0,\Gamma}=\operatorname{span}\{\psi\}, \qquad \ker_{H^{-1/2}(\Gamma)}(B_0^{0,\Gamma})^{\mathsf T}=\operatorname{span}\{\mathbf1_\Gamma\}.
\]
Set \(\operatorname{Cap}(D):=\int_\Gamma\psi\,d\sigma>0\). The right-kernel density \(\psi\) and the normalized element \(\mathbf1_\Gamma/\operatorname{Cap}(D)\) of the transpose kernel have a bilinear pairing equal to one. Now, let \(D_i\), \(i=1,\ldots,M\), be finitely many such domains, set \(\Gamma_i:=\partial D_i\), and let \(S_0^{\Gamma_i}\psi_i=\mathbf1_{\Gamma_i}\). For \(f\in\bigoplus_{i=1}^M H^{1/2}(\Gamma_i)\), define the finite-rank map
\begin{equation}
\label{eq:static_capacitance_projection}
f\longmapsto\sum_{i=1}^M\psi_i \frac{\displaystyle\int_{\Gamma_i}f\,d\sigma} {\operatorname{Cap}(D_i)}.
\end{equation}
Zero is a semisimple eigenvalue of the block-diagonal operator \(\bigoplus_iB_0^{0,\Gamma_i}\), and the map in \eqref{eq:static_capacitance_projection} is its Riesz projection at zero. 
\end{corollary}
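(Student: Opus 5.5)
At zero wavenumber, \(B_0^{0,\Gamma}=\tfrac12 I+(K_0^\Gamma)^\ast\), since the term \(-i\eta kS_k^\Gamma\) vanishes. The plan is to obtain both kernel identities from the jump relations for the exterior/interior Laplace problem. We then identify the kernels exactly by a Fredholm and uniqueness argument. Finally we verify that the displayed rank-\(M\) map is the Riesz projection.

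\textbf{Kernel and cokernel.} Let \(u:=\mathcal S_0\psi\) be the single-layer potential. Then \(u=1\) on \(\Gamma\), and since \(D\) is interior and bounded, uniqueness for the interior Dirichlet problem gives \(u\equiv1\) in \(D\). The interior normal trace is therefore zero. With the outward normal and the paper's sign convention (``plus sign is the interior normal-trace sign''), this trace is \(\partial_\nu u|_{-}=(\tfrac12 I+(K_0^\Gamma)^\ast)\psi\), up to the sign convention for \(K^\ast\) that makes this the interior trace. Hence \(B_0^{0,\Gamma}\psi=0\).

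For the cokernel, Proposition~\ref{prop:one_cell_calderon_identity} gives \((B_0^{0,\Gamma})^{\mathsf T}=\tfrac12 I+K_0^\Gamma\). The double layer of a constant satisfies Gauss's identity, so that \((\tfrac12 I+K_0^\Gamma)\mathbf 1_\Gamma=0\) in the matching convention. Alternatively, one can apply \eqref{eq:combined_field_calderon_identity} with \(k=0\): \((B_0^{0,\Gamma})^{\mathsf T}\mathbf 1=(B_0^{0,\Gamma})^{\mathsf T}S_0\psi=S_0B_0^{0,\Gamma}\psi=0\). I prefer this second route, as it avoids sign bookkeeping.

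\textbf{Exact kernels.} \(K_0^\ast\) is compact on a smooth surface, so \(B_0^{0,\Gamma}\) is Fredholm of index zero. If \(B_0^{0,\Gamma}\phi=0\), then \(v=\mathcal S_0\phi\) is harmonic in \(D\) with zero Neumann data. Since \(\Gamma\) is connected, \(v\) is constant in \(D\), so \(S_0\phi=c\mathbf 1\), and invertibility of \(S_0\) in three dimensions gives \(\phi=c\psi\). Index zero then makes the transpose kernel one-dimensional, and hence equal to \(\operatorname{span}\{\mathbf 1_\Gamma\}\).

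\(\operatorname{Cap}(D)=\int\psi>0\) follows from \(\int\psi=\langle S_0\psi,\psi\rangle\), which is the positive Dirichlet energy of \(u\) in \(\mathbb R^3\). The normalized pairing \(\langle\psi,\mathbf 1/\operatorname{Cap}\rangle_{\mathrm{bil}}=1\) is then immediate.

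\textbf{Semisimplicity and the Riesz projection.} Semisimplicity is the main step to handle carefully. Suppose \(B\phi=\psi\) with \(B:=B_0^{0,\Gamma}\). Pairing with the cokernel vector gives \(\langle\psi,\mathbf 1\rangle=\langle B\phi,\mathbf 1\rangle=\langle\phi,B^{\mathsf T}\mathbf 1\rangle=0\), which contradicts \(\operatorname{Cap}(D)>0\). So there is no Jordan chain, and \(0\) is a semisimple eigenvalue of algebraic multiplicity one. Since \(B\) is Fredholm of index zero, \(0\) is an isolated eigenvalue of finite algebraic multiplicity (analytic Fredholm theory applied to \(\tfrac12 I+K^\ast-\zeta\)).

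For an isolated semisimple eigenvalue with one-dimensional kernel \(\operatorname{span}\{\psi\}\) and transpose kernel \(\operatorname{span}\{\mathbf 1\}\), the Riesz projection is \(f\mapsto\psi\,\langle f,\mathbf 1\rangle/\langle\psi,\mathbf 1\rangle\). To see this, note that the projection has range \(\ker B\). Its transpose is the Riesz projection of \(B^{\mathsf T}\), which has range \(\ker B^{\mathsf T}\). The normalization is fixed by the requirement \(P\psi=\psi\).

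For \(M\) components, the operator is block diagonal. The resolvent, and hence the Riesz projection, is the direct sum of the componentwise projections. This gives \eqref{eq:static_capacitance_projection}, and semisimplicity holds blockwise.
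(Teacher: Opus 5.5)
Your proposal is correct and follows the paper's proof essentially step for step. You use the interior single-layer potential and Dirichlet uniqueness for $B_0^{0,\Gamma}\psi=0$, the Calder\'on identity for $(B_0^{0,\Gamma})^{\mathsf T}\mathbf 1_\Gamma=0$, Fredholm index zero plus invertibility of $S_0^\Gamma$ for the exact kernels, the single-layer energy for $\operatorname{Cap}(D)>0$, and pairing with $\mathbf 1_\Gamma$ to exclude Jordan chains. For the energy step you should add, as the paper does, that $\psi$ is real, since otherwise the bilinear pairing $\int_\Gamma\psi\,S_0^\Gamma\psi\,d\sigma$ is not the Dirichlet energy.

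The only variation is the final identification of the Riesz projection. You determine the rank-one projection through its transpose, whose range must be $\ker(B_0^{0,\Gamma})^{\mathsf T}$. The paper instead checks $\ker P=\operatorname{Ran}B_0^{0,\Gamma}$ via the codimension-one range argument; the two are dual forms of the same fact. Your appeal to analytic Fredholm theory for the isolation of $0$ also makes explicit a point the paper leaves implicit.
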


\begin{proof}
Let \(u:=S_0^\Gamma\psi\). Its interior trace is \(\mathbf1_\Gamma\), so uniqueness for the interior Dirichlet problem gives \(u\equiv1\) in \(D\). Hence, its interior normal derivative vanishes. With the jump convention used in \eqref{eq:combined_field_operator},
\[
\partial_\nu^-u =\left(\frac12I+(K_0^\Gamma)^\ast\right)\psi =B_0^{0,\Gamma}\psi,
\]
which proves the first identity in \eqref{eq:static_capacitance_kernel_cokernel}. Proposition~\ref{prop:one_cell_calderon_identity} at \(k=0\) then gives
\[
(B_0^{0,\Gamma})^{\mathsf T}\mathbf1_\Gamma =(B_0^{0,\Gamma})^{\mathsf T}S_0^\Gamma\psi =S_0^\Gamma B_0^{0,\Gamma}\psi=0.
\]
Uniqueness of the single-layer equation and the real-valued integral kernel of \(S_0^\Gamma\) imply that \(\psi\) is real. The positive single-layer energy therefore gives
\[
\operatorname{Cap}(D)=\int_\Gamma\psi\,S_0^\Gamma\psi\,d\sigma>0,
\]
and the normalized bilinear pairing is one.
Moreover, the exterior potential is harmonic, equals one on \(\Gamma\), and decays at infinity. The maximum principle and the Hopf lemma give \(\partial_\nu^+u<0\); since the jump relation in our convention is \(\partial_\nu^+u-\partial_\nu^-u=-\psi\), it follows that \(\psi>0\).

For the direct sum, define
\[
\Lambda_i(f):=\frac{1}{\operatorname{Cap}(D_i)} \int_{\Gamma_i}f\,d\sigma.
\]
Because each density is extended by zero off its own component,
\[
\Lambda_i(\psi_j)=\delta_{ij}.
\]
Consequently, if \(P\) denotes the map in \eqref{eq:static_capacitance_projection}, then
\[
P^2f=\sum_{i,j}\psi_i\Lambda_i(\psi_j)\Lambda_j(f)=\sum_j\psi_j\Lambda_j(f)=Pf.
\]
Moreover,
\[
B_0^{0,\Gamma_i}\psi_i=0, \qquad \Lambda_i(B_0^{0,\Gamma_i}f)=0,
\]
by the componentwise kernel and cokernel identities, so the block-diagonal static self-interaction operator annihilates \(P\) on both sides.

It remains to identify the spectral projection. If \(B_0^{0,\Gamma}f=0\), then the interior single-layer potential \(S_0^\Gamma f\) has zero normal derivative and is therefore constant. Uniqueness of the single-layer equation gives \(f\in\operatorname{span}\{\psi\}\). On a \(C^\infty\) boundary, \(K_0^\Gamma\) and \((K_0^\Gamma)^\ast\) improve Sobolev regularity by one order, so the kernel elements of \(B_0^{0,\Gamma}\) on \(H^{1/2}(\Gamma)\) and of \((B_0^{0,\Gamma})^{\mathsf T}\) on \(H^{-1/2}(\Gamma)\) are smooth. Since \((K_0^\Gamma)^\ast\) is compact on \(H^{1/2}(\Gamma)\), \(B_0^{0,\Gamma}\) is Fredholm of index zero. Its cokernel is identified through the \(H^{1/2}\)-\(H^{-1/2}\) bilinear duality with the kernel of \((B_0^{0,\Gamma})^{\mathsf T}\) on \(H^{-1/2}(\Gamma)\). The right kernel is one-dimensional, while \(\mathbf1_\Gamma\) belongs to the transpose kernel by \eqref{eq:static_capacitance_kernel_cokernel}; hence, the latter kernel is \(\operatorname{span}\{\mathbf1_\Gamma\}\). Finally, if
\[
(B_0^{0,\Gamma})^2f=0, \qquad B_0^{0,\Gamma}f=\alpha\psi,
\]
then pairing with \(\mathbf1_\Gamma\) gives
\[
0=\langle B_0^{0,\Gamma}f,\mathbf1_\Gamma\rangle_{\mathrm{bil}}=\alpha\operatorname{Cap}(D),
\]
and therefore \(\alpha=0\). Thus, zero is semisimple on each component and on their direct sum. The range of each component operator is contained in \(\ker\Lambda_i\); both spaces have codimension one by the Fredholm alternative, so they are equal. Hence, the direct-sum range is exactly \(\ker P\). Together with \(\operatorname{Ran}P=\ker(\bigoplus_iB_0^{0,\Gamma_i})\), this proves that \(P\) is the Riesz projection at zero.
\end{proof}

For the scaled components introduced below, \eqref{eq:static_capacitance_projection} is the reference projection used to define the regular-simplex projected matrix; on spheres it is also the \(\mathcal H_k\)-orthogonal projection onto the componentwise constant densities.

Corollary~\ref{cor:static_capacitance_pairing} gives a pairing between the right kernel and the kernel of the transpose for each isolated component, analogous to the equilibrium-density and constant-function pairing used in capacitance-matrix reductions such as \cite{ammari_davies_hiltunen_yu2020}. The equilibrium equation and jump relation give the right-kernel densities, while the Calder\'on identity identifies the normalized constant dual functionals. Identity \eqref{eq:combined_field_calderon_identity} is a bilinear symmetrization of the reference-cell operator, not a singular-value decomposition. The biorthogonal frames for the full BFZ symbol constructed below come instead from its Riesz projection.

\subsection{Projected BFZ matrix in the equilibrium-density basis}
\label{subsec:scalar_scale_winding_reference}

Using the isolated-component static kernel and dual functionals above, we now form the matrix of the projected BFZ operator in the equilibrium-density basis and determine its winding for a regular-simplex configuration.

Let \(M\in\{2,3,4\}\), and let \(p_1,\dots,p_M\in\mathbb R^3\) be the vertices of a regular simplex, normalized by
\begin{equation}
\label{eq:simplex_vertices_normalization}
|p_i|=R,\qquad p_i\cdot p_j=-\frac{R^2}{M-1}\qquad(i\ne j).
\end{equation}
We first construct the reference symbol from the equilibrium densities of isolated copies of a general smooth component. Let \(D\subset\mathbb R^3\) be a bounded smooth domain with connected boundary, containing the origin, and let \(O_i\in O(3)\). For the simplex vertices \(p_i\) in \eqref{eq:simplex_vertices_normalization}, set
\[
D_i^\rho:=p_i+\rho O_iD, \qquad \Gamma_i^\rho:=\partial D_i^\rho, \qquad \Gamma^\rho:=\bigsqcup_{i=1}^M\Gamma_i^\rho.
\]
For sufficiently small \(\rho\), this boundary satisfies the annular scale-separation condition above: it lies between the radii \(R-\rho\sup_{y\in\partial D}|y|\) and \(R+\rho\sup_{y\in\partial D}|y|\), and, after increasing \(\lambda\) if necessary, the latter is smaller than \(\lambda\) times the former.
We call the configuration simplex-equivariant, or simply equivariant, if, for every permutation \(\sigma\) in the symmetric group \(S_M\), there is \(Q_\sigma\in O(3)\) such that
\[
Q_\sigma p_i=p_{\sigma(i)}, \qquad Q_\sigma D_i^\rho=D_{\sigma(i)}^\rho \quad\text{for every }i.
\]
Since \(Q_\sigma(\lambda^n\Gamma^\rho)=\lambda^nQ_\sigma\Gamma^\rho\) and the Green function is orthogonally invariant, the coefficient matrices defined below commute with the corresponding permutation matrices. Thus, the symmetric line and its sum-zero complement in \(\mathbb C^M\) are invariant. This is the symmetry input used below.

Let \(S_0^{\Gamma_i^\rho}\psi_i^\rho=\mathbf1_{\Gamma_i^\rho}\), and write
\[
C_i^\rho:=\operatorname{Cap}(D_i^\rho)=\int_{\Gamma_i^\rho}\psi_i^\rho\,d\sigma, \qquad \Lambda_i^\rho(f):=\frac{1}{C_i^\rho}\int_{\Gamma_i^\rho}f\,d\sigma.
\]
The scaling of the single-layer operator gives the following identities:
\[
\psi_i^\rho(p_i+\rho O_i y)=\rho^{-1}\psi_D(y), \qquad C_i^\rho=\rho\operatorname{Cap}(D) = \rho \int_{\partial D}\psi_D\,d\sigma,
\]
where \(S_0^{\partial D}\psi_D=1\). Define the span of the equilibrium densities and the associated projection by
\begin{equation}
\label{eq:general_capacitance_space_projection}
E_{\mathrm{cap}}^\rho:=\operatorname{span}\{\psi_1^\rho,\ldots,\psi_M^\rho\}, \qquad P_{\mathrm{cap}}^\rho f:=\sum_{i=1}^M\psi_i^\rho\Lambda_i^\rho(f).
\end{equation}
Corollary~\ref{cor:static_capacitance_pairing} shows that \(E_{\mathrm{cap}}^\rho\) is the kernel of the block-diagonal isolated-component operator
\[
\bigoplus_{i=1}^M B_0^{0,\Gamma_i^\rho},
\]
and that \(P_{\mathrm{cap}}^\rho\) is its Riesz projection at zero; it is not, in general, an orthogonal projection. Moreover, the families \(\{\psi_j^\rho\}_{j=1}^M\) and \(\{\Lambda_i^\rho\}_{i=1}^M\) are biorthogonal because \(\Lambda_i^\rho(\psi_j^\rho)=\delta_{ij}\).

Abbreviate \(B_n^{k,\rho}:=B_n^{k,\Gamma^\rho}\) and
\[
\mathcal T^\rho(t):=\sum_{n\in\mathbb Z}e^{-int}\lambda^{\beta n}B_n^{k,\rho}.
\]
The matrix coefficients of the projected BFZ operator in the equilibrium-density basis are
\begin{equation}
\label{eq:general_capacitance_coordinate_matrix}
\bigl(\mathsf C_{\beta,\mathrm{cap}}^\rho(t)\bigr)_{ij} := \Lambda_i^\rho\bigl(\mathcal T^\rho(t)\psi_j^\rho\bigr).
\end{equation}
Thus, \(\mathsf C_{\beta,\mathrm{cap}}^\rho(t)\) is the matrix representation of \(P_{\mathrm{cap}}^\rho\mathcal T^\rho(t)|_{E_{\mathrm{cap}}^\rho}\). By equivariance it is scalar on the symmetric line and the sum-zero subspace. Write \(\mu_{+,\mathrm{cap}}^\rho\) for the symmetric-sector loop and \(\mu_{-,\mathrm{cap}}^\rho\) for the sum-zero-sector loop, and let \(z_{\pm,\mathrm{cap}}^\rho\) be their respective zeroth Fourier coefficients. We use these coefficients as the sector reference points, so subtracting them removes the same-scale term in the selected sector.

The probability measure \(\operatorname{Cap}(D)^{-1}\psi_D\,d\sigma\) is the normalized equilibrium measure. The shape dependence of the leading scale interaction is contained in its Fourier transform,
\begin{equation}
\label{eq:equilibrium_form_factor}
F_D(\xi):=\frac{1}{\operatorname{Cap}(D)} \int_{\partial D}e^{i\xi\cdot y}\psi_D(y)\,d\sigma(y).
\end{equation}
The normalized equilibrium measure is positive, so \(F_D(0)=1\) and
\begin{equation}
\label{eq:equilibrium_form_factor_near_zero}
|F_D(\xi)-1|\le |\xi|\sup_{y\in D}|y|.
\end{equation}
In particular, \(F_D\) is nonzero on every sufficiently small ball about the origin.

The next lemma provides all the coefficient estimates needed for the general-shape winding argument. For an equivariant configuration, the value of \(F_D(sO_i^{\mathsf T}p_i/R)\) is independent of \(i\). Indeed, if a simplex symmetry \(Q\) sends \(p_i\) to \(p_j\), then \(O_j^{\mathsf T}QO_i\) preserves \(D\) and its equilibrium measure, while
\[
(O_j^{\mathsf T}QO_i)O_i^{\mathsf T}\frac{p_i}{R} =O_j^{\mathsf T}\frac{p_j}{R}.
\]
Fix \(\omega:=O_i^{\mathsf T}p_i/R\) for one index \(i\); the preceding equivariance argument makes \(F_D(s\omega)\) independent of this choice.

\begin{lemma}[Coefficients of the projected BFZ matrix for a regular simplex]
\label{lem:general_capacitance_coefficients}
Fix \(\beta\in(-1,0)\), and let
\[
c_{ij,n}^{\rho,\mathrm{cap}} :=\Lambda_i^\rho(B_n^{k,\rho}\psi_j^\rho),
\qquad
a_{+,n}^{\rho,\mathrm{cap}} :=c_{ii,n}^{\rho,\mathrm{cap}}+(M-1)c_{ij,n}^{\rho,\mathrm{cap}},
\qquad
a_{-,n}^{\rho,\mathrm{cap}} :=c_{ii,n}^{\rho,\mathrm{cap}}-c_{ij,n}^{\rho,\mathrm{cap}},
\]
where \(i\ne j\); equivariance makes these quantities independent of the chosen off-diagonal pair. For fixed \(k,R,\eta\) and \(c_\ast>0\), uniformly for small \(\rho\), large \(\lambda\), and \(k\lambda\rho\le c_\ast\),
\[
\mu_{\pm,\mathrm{cap}}^\rho(t) =\sum_{n\in\mathbb Z}e^{-int}\lambda^{\beta n}a_{\pm,n}^{\rho,\mathrm{cap}}, \qquad z_{\pm,\mathrm{cap}}^\rho=a_{\pm,0}^{\rho,\mathrm{cap}},
\]
and
\begin{equation}
\label{eq:general_first_scale_coefficients}
a_{\pm,1}^{\rho,\mathrm{cap}} =-\frac{i\eta k|\partial D|}{4\pi R}\rho^2e^{ik\lambda R} F_D(k\lambda\rho\,\omega)d_\pm(kR) +O\bigl(\rho^2(\rho+\lambda^{-1})\bigr),
\end{equation}
where
\[
d_+(s):=e^{-is}+(M-1)e^{is/(M-1)}, \qquad d_-(s):=e^{-is}-e^{is/(M-1)}.
\]
Here, \(d_+\) and \(d_-\) are the symmetric- and sum-zero-sector interference factors, respectively.
For the same-scale off-diagonal coefficient,
\begin{equation}
\label{eq:general_same_scale_capacitance_coefficient}
c_{ij,0}^{\rho,\mathrm{cap}} =-i\eta k|\partial D|\rho^2G_k(p_i,p_j)+O(\rho^3), \qquad i\ne j.
\end{equation}
Consequently, \(|z_{+,\mathrm{cap}}^\rho-z_{-,\mathrm{cap}}^\rho|\ge c\rho^2\) for all sufficiently small \(\rho\). Moreover,
\begin{equation}
\label{eq:general_capacitance_tail_coefficients}
|a_{\pm,n}^{\rho,\mathrm{cap}}|\le C\rho^2\quad(n\ge2), \qquad |a_{\pm,-m}^{\rho,\mathrm{cap}}|\le C\rho^2\lambda^{-m}\quad(m\ge1).
\end{equation}
\end{lemma}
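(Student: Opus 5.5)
The plan is to compute each coefficient \(c_{ij,n}^{\rho,\mathrm{cap}}=\Lambda_i^\rho(B_n^{k,\rho}\psi_j^\rho)\) directly from the kernel of \(B_n^{k,\rho}\). The same scaling will be used throughout: \(\psi_j^\rho(p_j+\rho O_jy)=\rho^{-1}\psi_D(y)\), \(d\sigma=\rho^2 d\sigma_D\) and \(C_i^\rho=\rho\operatorname{Cap}(D)\).

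\textbf{Sector decomposition.} By equivariance, the coefficient matrices \((c_{ij,n})\) commute with all permutation matrices. They therefore have the form \(\alpha_n I+\gamma_n(J-I)\). Their eigenvalues are \(a_{+,n}\) on the symmetric line and \(a_{-,n}\) on the sum-zero space. Summing the absolutely convergent Fourier series (Lemma~\ref{lem:weighted_laurent_realization}, applied through the bounded functionals \(\Lambda_i^\rho\)) gives the formulas for \(\mu_{\pm,\mathrm{cap}}^\rho\) and identifies \(z_\pm\) with the zeroth coefficients.

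\textbf{Off-diagonal interactions (\(n\ne0\), and \(n=0\) with \(i\ne j\)).} Here the source and target surfaces are separated, so the kernel \(\mathsf K_\eta^k(x,z)\) is smooth and no singular integrals arise. Write
\[
c_{ij,n}=\frac{\lambda^n}{C_i^\rho}\int_{\Gamma_i^\rho}\int_{\Gamma_j^\rho}\mathsf K_\eta^k(x,\lambda^n z)\psi_j^\rho(z)\,d\sigma(z)\,d\sigma(x).
\]
The \(z\)-integral of \(\psi_j^\rho\) alone has total mass \(C_j^\rho=\rho\operatorname{Cap}(D)\). The \(x\)-integral over \(\Gamma_i^\rho\), divided by \(C_i^\rho\), contributes \(\rho^2|\partial D|/(\rho\operatorname{Cap}(D))\). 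Two contributions to the kernel behave differently:
\begin{itemize}
\item[(a)] The normal-derivative part \(\int_{\Gamma_i^\rho}\partial_{\nu_x}G\,d\sigma(x)\) vanishes to leading order by the divergence theorem, since \(G\) is smooth in \(D_i^\rho\). Its remainder is \(O(\rho^3)\) times derivative bounds, because \(\int_{D_i^\rho}\Delta_x G=-k^2\int G\).
\item[(b)] The \(-i\eta kG\) part gives the leading term \(-i\eta k|\partial D|\rho^2\, G_k(p_i,\cdot)\) integrated against the normalized equilibrium measure of the source.
\end{itemize}
For \(n=0\) and \(i\ne j\), Taylor expansion of \(G_k(x,z)\) about \((p_i,p_j)\) yields \eqref{eq:general_same_scale_capacitance_coefficient}.

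\textbf{First-scale coefficient (\(n=1\)).} The source \(\lambda\Gamma_j^\rho\) sits at distance about \(\lambda R\), so the far-field expansion applies:
\[
G_k(x,\lambda z)=\frac{e^{ik|\lambda z-x|}}{4\pi|\lambda z-x|},\qquad |\lambda z-x|=\lambda|z|-\widehat z\cdot x+O(\lambda^{-1}).
\]
\begin{itemize}
\item The prefactor \(\lambda^n\) cancels the amplitude \(1/(4\pi\lambda R)\), leaving \(1/(4\pi R)\).
\item Writing \(z=p_j+\rho O_jy\), the phase \(k\lambda|z|=k\lambda R+k\lambda\rho\,\widehat p_j\cdot O_jy+O(k\lambda\rho^2)\) produces \(e^{ik\lambda R}F_D(k\lambda\rho\,\omega)\). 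The argument \(k\lambda\rho\le c_\ast\) is bounded, and the quadratic correction \(k\lambda\rho^2\le c_\ast\rho\) is absorbed into the error.
\item The factor \(e^{-ik\widehat p_j\cdot p_i}\), with \(\widehat p_j\cdot p_i=R\) for \(i=j\) and \(-R/(M-1)\) for \(i\ne j\), gives \(e^{-ikR}\) on the diagonal and \(e^{ikR/(M-1)}\) off it.
\end{itemize}
Assembling \(a_{\pm,1}\) yields the factor \(d_\pm(kR)\). The errors come from Taylor remainders in \(x\) of size \(O(\rho)\), the far-field remainder \(O(\lambda^{-1})\), and the normal-derivative part of size \(O(\rho^3)\) or \(O(\lambda^{-1})\). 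Together these give \(O(\rho^2(\rho+\lambda^{-1}))\).

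\textbf{Tail coefficients.} For \(n\ge2\), the same computation with crude bounds \(|\lambda^n G(x,\lambda^nz)|\le C\) gives \(|a_{\pm,n}|\le C\rho^2\). For \(n=-m\), the factor \(\lambda^{-m}\) combined with the separation \(\delta_\ast\) from Lemma~\ref{lem:scale_separation} gives \(C\rho^2\lambda^{-m}\). The divergence-theorem cancellation is applied uniformly in \(n\).

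\textbf{Gap between the two reference values.} Since \(z_+-z_-=Mc_{ij,0}\) and \(G_k(p_i,p_j)\ne0\) for \(k>0\), it follows that \(|z_+-z_-|\ge c\rho^2\).

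\textbf{Main obstacle.} I expect the hardest step to be proving that the error is uniform in the joint regime of small \(\rho\), large \(\lambda\), and \(k\lambda\rho\le c_\ast\). The large phase \(k\lambda\cdot(\text{source displacement})\) cannot be Taylor expanded. It must be kept exactly inside \(F_D\), while every other contribution is expanded with constants independent of \(\lambda\).
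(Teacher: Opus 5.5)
Your proposal is correct and follows essentially the same route as the paper: the pulled-back coefficient formula, the large-distance expansion at \(n=1\) that keeps the phase \(k\lambda\rho\,\omega_j\cdot\widehat y\) exact to produce \(F_D\), the identity \(\int_{\Gamma_i^\rho}\partial_{\nu_x}G_k\,d\sigma=-k^2\int_{D_i^\rho}G_k\,dx\) to suppress the double-layer part, the simplex phases \(e^{-ikR}\) and \(e^{ikR/(M-1)}\), a Taylor expansion for \(c_{ij,0}\), crude separated-kernel bounds for the tails, and \(z_+-z_-=Mc_{ij,0}\). Two small points: the tail bounds need no cancellation, because the crude bound on \(\lambda^n\mathsf K_\eta^k\) already controls the normal-derivative part. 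Also, factoring a single \(F_D(k\lambda\rho\,\omega)\) out of \(a_{\pm,1}\) uses the equivariance argument, stated before the lemma, showing that \(F_D(k\lambda\rho\,\omega_j)\) does not depend on \(j\).
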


\begin{proof}
Put \(L=\lambda^n\), write \(x=p_i+\rho O_i\widehat x\), \(y=p_j+\rho O_j\widehat y\), and set \(\omega_j:=O_j^{\mathsf T}p_j/R\). For \(n=1\), the large-distance expansion, uniform under \(k\lambda\rho\le c_\ast\), is
\[
\left|p_i+\rho O_i\widehat x-\lambda(p_j+\rho O_j\widehat y)\right|=\lambda R+\lambda\rho\,\omega_j\cdot\widehat y-\frac{p_i\cdot p_j}{R} +O(\lambda\rho^2+\rho+\lambda^{-1}).
\]
Since \(k\lambda\rho\le c_\ast\), the error in the phase is \(O(\rho+\lambda^{-1})\). It follows that
\[
\lambda G_k\bigl(p_i+\rho O_i\widehat x,\lambda(p_j+\rho O_j\widehat y)\bigr)=\frac{e^{ik\lambda R}}{4\pi R}e^{ik\lambda\rho\,\omega_j\cdot\widehat y}e^{-ikp_i\cdot p_j/R}+O(\rho+\lambda^{-1}).
\]
Integrating the source density produces \(\operatorname{Cap}(D)F_D(k\lambda\rho\,\omega_j)\), which is equal to the common equivariant value \(\operatorname{Cap}(D)F_D(k\lambda\rho\,\omega)\) and cancels the capacity in \(\Lambda_i^\rho\); integrating the target variable produces \(|\partial D|\). Therefore, the single-layer part of the combined kernel gives the leading term in \eqref{eq:general_first_scale_coefficients}. For the double-layer part, a source point \(z\) lies outside \(D_i^\rho\), and
\[
\int_{\Gamma_i^\rho}\partial_{\nu_x}G_k(x,z)\,d\sigma(x)=-k^2\int_{D_i^\rho}G_k(x,z)\,dx.
\]
At \(n=1\), the right-hand side is \(O(\rho^3\lambda^{-1})\). The source density has mass \(O(\rho)\); after the scale-normalization factor \(\lambda\) and the normalization by \(C_i^\rho=O(\rho)\), the double-layer contribution is \(O(\rho^3)\), which is absorbed by the remainder in \eqref{eq:general_first_scale_coefficients}.
The simplex identities in \eqref{eq:simplex_vertices_normalization} give the phases \(e^{-ikR}\) when \(i=j\) and \(e^{ikR/(M-1)}\) when \(i\ne j\), proving \eqref{eq:general_first_scale_coefficients} with the stated sign.

For \(n=0\) and \(i\ne j\), a Taylor expansion on the two separated components gives
\[
\Lambda_i^\rho(S_k^{\Gamma_j^\rho}\psi_j^\rho)=|\partial D|\rho^2G_k(p_i,p_j)+O(\rho^3),
\]
while the averaged target normal derivative is \(O(\rho^3)\). This proves \eqref{eq:general_same_scale_capacitance_coefficient}. Since \(z_{+,\mathrm{cap}}^\rho-z_{-,\mathrm{cap}}^\rho=M c_{ij,0}^{\rho,\mathrm{cap}}\) and \(G_k(p_i,p_j)\ne0\), the separation follows.

For \(n\ne0\), and also for \(n=0\) when \(i\ne j\), both boundaries are separated and the exact pulled-back coefficient formula is
\[
c_{ij,n}^{\rho,\mathrm{cap}}=\frac{L\rho^2}{\operatorname{Cap}(D)}\int_{\partial D}\!\int_{\partial D}\mathsf K_\eta^k\bigl(p_i+\rho O_i\widehat x,L(p_j+\rho O_j\widehat y)\bigr)\psi_D(\widehat y)\,d\sigma(\widehat y)d\sigma(\widehat x).
\]
For \(n\ge2\), separation from the enlarged source gives
\[
L\left|\mathsf K_\eta^k\bigl(p_i+\rho O_i\widehat x, L(p_j+\rho O_j\widehat y)\bigr)\right|\le C.
\]
For \(n=-m\), the contracted source remains a fixed distance from the target and
\[
L\left|\mathsf K_\eta^k\bigl(p_i+\rho O_i\widehat x, L(p_j+\rho O_j\widehat y)\bigr)\right|\le C\lambda^{-m}.
\]
The equilibrium density has a fixed \(L^1(\partial D)\)-norm, so these inequalities prove \eqref{eq:general_capacitance_tail_coefficients}.
\end{proof}

The next theorem converts these estimates into the point-gap winding of the projected matrix family.

\begin{theorem}[Winding of the projected BFZ matrix for a regular simplex]
\label{thm:general_capacitance_simplex_winding}
Fix \(M\in\{2,3,4\}\), \(k,R,\eta>0\), and an equivariant congruent-component configuration as above. Assume
\[
0<kR<\frac{\pi}{2},
\qquad
\beta\in\left(-\frac12,0\right),
\qquad
\inf_{0\le s\le c_\ast}|F_D(s\omega)|>0
\]
for some \(c_\ast>0\). Then there are \(\rho_0>0\), \(\lambda_0>1\), and \(c>0\) such that, whenever \(0<\rho\le\rho_0\), \(\lambda\ge\lambda_0\), and \(k\lambda\rho\le c_\ast\), the matrix \(\mathsf C_{\beta,\mathrm{cap}}^\rho\) has point gaps at \(z_{+,\mathrm{cap}}^\rho\) and \(z_{-,\mathrm{cap}}^\rho\), its two shifted sector loops both have  winding number \(-1\), and
\begin{equation}
\label{eq:general_capacitance_winding_conclusion}
\operatorname{Wind}\left(\det\bigl(\mathsf C_{\beta,\mathrm{cap}}^\rho(\cdot)-z_{+,\mathrm{cap}}^\rho I_M\bigr),0\right)=-1,
\qquad
\operatorname{Wind}\left(\det\bigl(\mathsf C_{\beta,\mathrm{cap}}^\rho(\cdot)-z_{-,\mathrm{cap}}^\rho I_M\bigr),0\right)=-(M-1).
\end{equation}
Moreover,
\begin{equation}
\label{eq:general_capacitance_reference_gap}
\inf_t\sigma_{\min}\bigl(\mathsf C_{\beta,\mathrm{cap}}^\rho(t)-z_{\pm,\mathrm{cap}}^\rho I_M\bigr) \ge c\rho^2\lambda^\beta.
\end{equation}
The nonvanishing hypothesis on \(F_D\) is automatic, for example, if \(c_\ast\sup_{y\in D}|y|<1\).
\end{theorem}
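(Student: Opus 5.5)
The plan is to diagonalize \(\mathsf C_{\beta,\mathrm{cap}}^\rho(t)\) by simplex symmetry and show that, in each sector, the shifted scalar loop is dominated by its first-scale Fourier mode \(e^{-it}\). By equivariance and the definition \eqref{eq:general_capacitance_coordinate_matrix}, \(\mathsf C_{\beta,\mathrm{cap}}^\rho(t)\) commutes with all permutation matrices. It is therefore of the form \(\alpha(t)I_M+b(t)J\), with \(J\) the all-ones matrix. It is a normal matrix, with eigenvalue \(\mu_{+,\mathrm{cap}}^\rho(t)\) on the symmetric line and \(\mu_{-,\mathrm{cap}}^\rho(t)\) (multiplicity \(M-1\)) on the orthogonal sum-zero space. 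Lemma~\ref{lem:general_capacitance_coefficients} gives the Fourier representation of both sector loops. Hence
\[
\mu_{\pm,\mathrm{cap}}^\rho(t)-z_{\pm,\mathrm{cap}}^\rho=e^{-it}\lambda^{\beta}a_{\pm,1}^{\rho,\mathrm{cap}}+\sum_{n\ge2}e^{-int}\lambda^{\beta n}a_{\pm,n}^{\rho,\mathrm{cap}}+\sum_{m\ge1}e^{imt}\lambda^{-\beta m}a_{\pm,-m}^{\rho,\mathrm{cap}}.
\]

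First I bound the leading coefficient from below. By \eqref{eq:general_first_scale_coefficients}, \(|a_{\pm,1}^{\rho,\mathrm{cap}}|\ge c_1\rho^2|d_\pm(kR)|\inf_{s\le c_\ast}|F_D(s\omega)|-C\rho^2(\rho+\lambda^{-1})\). So it suffices to have \(d_\pm(kR)\neq0\) for \(0<kR<\pi/2\).

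For \(d_+\) I use real parts: \(\operatorname{Re}d_+(s)=\cos s+(M-1)\cos(s/(M-1))>0\) on \((0,\pi/2)\). For \(d_-\), a zero requires \(e^{-is}=e^{is/(M-1)}\), that is, \(sM/(M-1)\in2\pi\mathbb Z\). The smallest positive such \(s\) is \(2\pi(M-1)/M\ge\pi>\pi/2\), so \(d_-\neq0\) there as well. Thus, for \(\rho\le\rho_0\) and \(\lambda\ge\lambda_0\), the leading term has modulus at least \(2c\rho^2\lambda^\beta\).

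Next I bound the tails using \eqref{eq:general_capacitance_tail_coefficients}:
\[
\sum_{n\ge2}\lambda^{\beta n}|a_{\pm,n}^{\rho,\mathrm{cap}}|\le C\rho^2\frac{\lambda^{2\beta}}{1-\lambda^\beta},
\qquad
\sum_{m\ge1}\lambda^{-\beta m}|a_{\pm,-m}^{\rho,\mathrm{cap}}|\le C\rho^2\frac{\lambda^{-(1+\beta)}}{1-\lambda^{-(1+\beta)}}.
\]
Relative to \(\rho^2\lambda^\beta\), these are \(O(\lambda^{\beta})\) and \(O(\lambda^{-1-2\beta})\). Both tend to zero as \(\lambda\to\infty\) precisely because \(\beta\in(-1/2,0)\); this is the crucial use of the hypothesis on \(\beta\), and it is the step I expect to require the most care. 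With the constants made uniform under \(k\lambda\rho\le c_\ast\), enlarging \(\lambda_0\) gives
\[
\mu_{\pm,\mathrm{cap}}^\rho(t)-z_{\pm,\mathrm{cap}}^\rho=e^{-it}\lambda^\beta a_{\pm,1}^{\rho,\mathrm{cap}}\,(1+r_\pm(t)),\qquad \sup_t|r_\pm(t)|<\tfrac12.
\]
Consequently the shifted sector loop never vanishes and satisfies \(|\mu_\pm-z_\pm|\ge c\rho^2\lambda^\beta\). Its winding is \(-1\), since \(1+r_\pm\) has winding \(0\) (it stays in the disk of radius \(1/2\) about \(1\)).

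For the determinants,
\[
\det\bigl(\mathsf C_{\beta,\mathrm{cap}}^\rho(t)-z_{+,\mathrm{cap}}^\rho I_M\bigr)=(\mu_+-z_+)(\mu_--z_+)^{M-1}.
\]
I write \(\mu_--z_+=(z_--z_+)+(\mu_--z_-)\). By Lemma~\ref{lem:general_capacitance_coefficients}, \(|z_--z_+|\ge c\rho^2\), while \(|\mu_--z_-|\le C\rho^2\lambda^\beta\) by the same bounds as above. For \(\lambda\) large, the loop \(\mu_--z_+\) therefore stays in a disk about \(z_--z_+\) not containing \(0\), so it has winding \(0\) and modulus \(\gtrsim\rho^2\).

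Multiplicativity of winding then gives total winding \(-1\) about \(z_+\). Symmetrically, about \(z_-\) the factor \(\mu_+-z_-\) has winding \(0\) and \((\mu_--z_-)^{M-1}\) contributes \(-(M-1)\), which proves \eqref{eq:general_capacitance_winding_conclusion}.

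Since the matrix is normal, its singular values are the moduli of its eigenvalues. The eigenvalues of \(\mathsf C_{\beta,\mathrm{cap}}^\rho(t)-z_{\pm,\mathrm{cap}}^\rho I_M\) are bounded below by \(\min(c\rho^2\lambda^\beta,c\rho^2)=c\rho^2\lambda^\beta\), which gives \eqref{eq:general_capacitance_reference_gap}.

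Finally, for the last claim, \eqref{eq:equilibrium_form_factor_near_zero} with \(|s\omega|\le c_\ast\) yields \(|F_D(s\omega)|\ge1-c_\ast\sup_{y\in D}|y|>0\).
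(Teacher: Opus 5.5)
Your proposal is correct and follows essentially the same route as the paper. It diagonalizes by simplex symmetry, shows that the mode $e^{-it}\lambda^\beta a_{\pm,1}^{\rho,\mathrm{cap}}$ dominates the tails because $\beta\in(-1/2,0)$, keeps the complementary sector away from the reference point via $|z_{+,\mathrm{cap}}^\rho-z_{-,\mathrm{cap}}^\rho|\ge c\rho^2$, and concludes by determinant factorization. The differences are only cosmetic: you prove $d_+(kR)\neq0$ via $\operatorname{Re}d_+>0$ instead of the paper's moduli/cosine argument, and you bound the complementary-sector displacement by $C\rho^2\lambda^\beta$ instead of the paper's $o(\rho^2)$.
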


\begin{proof}
For \(0<kR<\pi/2\), both \(d_+(kR)\) and \(d_-(kR)\) are nonzero. Indeed, for \(M\ge3\) the two summands in \(d_+\) have different moduli, while for \(M=2\), \(d_+(kR)=2\cos(kR)\); moreover, \(d_-(kR)=0\) would imply \(MkR/(M-1)\in2\pi\mathbb Z\). The lower bound on \(F_D\) and \eqref{eq:general_first_scale_coefficients} therefore give
\[
|a_{\pm,1}^{\rho,\mathrm{cap}}|\ge c_\pm\rho^2
\]
after decreasing \(\rho_0\) and increasing \(\lambda_0\).

Since \(z_{\pm,\mathrm{cap}}^\rho=a_{\pm,0}^{\rho,\mathrm{cap}}\), write
\[
\mu_{\pm,\mathrm{cap}}^\rho(t)-z_{\pm,\mathrm{cap}}^\rho =e^{-it}\lambda^\beta a_{\pm,1}^{\rho,\mathrm{cap}}+R_\pm^\rho(t).
\]
The tail estimates in \eqref{eq:general_capacitance_tail_coefficients} yield
\[
\sup_t|R_\pm^\rho(t)| \le C\rho^2\left( \frac{\lambda^{2\beta}}{1-\lambda^\beta} +\frac{\lambda^{-(1+\beta)}}{1-\lambda^{-(1+\beta)}} \right).
\]
After division by \(\lambda^\beta|a_{\pm,1}^{\rho,\mathrm{cap}}|\), the right-hand side is bounded by a constant times
\[
\frac{\lambda^\beta}{1-\lambda^\beta}+\frac{\lambda^{-1-2\beta}}{1-\lambda^{-(1+\beta)}},
\]
which tends to zero because \(\beta\in(-1/2,0)\). Each shifted sector loop is consequently homotopic in \(\mathbb C\setminus\{0\}\) to \(e^{-it}\lambda^\beta a_{\pm,1}^{\rho,\mathrm{cap}}\), and therefore has winding number \(-1\). The same dominance gives
\[
\inf_t|\mu_{\pm,\mathrm{cap}}^\rho(t)-z_{\pm,\mathrm{cap}}^\rho| \ge c\rho^2\lambda^\beta.
\]
For either sector, the entire nonzero-scale part also satisfies
\[
\sup_t|\mu_{\pm,\mathrm{cap}}^\rho(t)-z_{\pm,\mathrm{cap}}^\rho| \le C\rho^2\left( \frac{\lambda^\beta}{1-\lambda^\beta} +\frac{\lambda^{-(1+\beta)}}{1-\lambda^{-(1+\beta)}} \right)=o(\rho^2).
\]
At either reference point, this is dominated by
\(|z_{+,\mathrm{cap}}^\rho-z_{-,\mathrm{cap}}^\rho|\ge c\rho^2\), so the complementary sector remains at distance at least \(c\rho^2\) and has winding number zero. The two symmetry sectors are orthogonal, and the matrix is scalar on each of them; since \(\lambda^\beta\le1\), taking the smaller sector distance proves \eqref{eq:general_capacitance_reference_gap}. Finally,
\[
\det\bigl(\mathsf C_{\beta,\mathrm{cap}}^\rho(t)-zI_M\bigr) =\bigl(\mu_{+,\mathrm{cap}}^\rho(t)-z\bigr) \bigl(\mu_{-,\mathrm{cap}}^\rho(t)-z\bigr)^{M-1}.
\]
The two sector windings therefore prove \eqref{eq:general_capacitance_winding_conclusion}. The final sufficient condition follows from \eqref{eq:equilibrium_form_factor_near_zero}.
\end{proof}

The hypothesis in Theorem~\ref{thm:general_capacitance_simplex_winding} on \(F_D\) is not restricted to spheres. For example, if \(D=\mathsf A B_1\) with \(\mathsf A\in\mathrm{GL}(3,\mathbb R)\), its normalized equilibrium measure is the pushforward of normalized surface measure on \(\mathbb S^2\) under \(y=\mathsf A\omega\). Consequently, \(F_D(\xi)=j_0(|\mathsf A^{\mathsf T}\xi|)\), where \(j_0(s)=\sin(s)/s\) \cite{ebenfelt_khavinson_shapiro2002}. Hence, the hypothesis holds whenever \(c_\ast|\mathsf A^{\mathsf T}\omega|<\pi\).

For use in the Riesz comparison, we record the spherical specialization in the normalization used below. We let
\[
\Gamma_i^\rho:=\partial B_\rho(p_i), \qquad \Gamma^\rho:=\bigsqcup_{i=1}^M\Gamma_i^\rho, \qquad A_\rho:=4\pi\rho^2,
\]
and retain the abbreviations \(B_n^{k,\rho}:=B_n^{k,\Gamma^\rho}\) and \(\mathcal T^\rho(t):=\mathcal T_{k,\beta}^{\Gamma^\rho}(t)\). Since
\[
\psi_i^\rho=\rho^{-1}\mathbf1_{\Gamma_i^\rho}, \qquad \operatorname{Cap}(B_\rho)=4\pi\rho, \qquad F_{B_1}(\xi)=j_0(|\xi|),
\]
the vectors
\[
e_i^\rho:=\frac{\mathbf1_{\Gamma_i^\rho}}{\sqrt{kA_\rho}} =\frac{\psi_i^\rho}{\sqrt{4\pi k}}, \qquad i=1,\ldots,M,
\]
form an orthonormal basis of \(E_{\mathrm{cap}}^\rho\) in \(\mathcal H_k(\Gamma^\rho)\), and \(P_{\mathrm{cap}}^\rho\) is the corresponding orthogonal projection. In this basis, the projected operator
\[
\mathsf C_\beta^\rho(t):= P_{\mathrm{cap}}^\rho\mathcal T^\rho(t)\big|_{E_{\mathrm{cap}}^\rho}
\]
is exactly the matrix \(\mathsf C_{\beta,\mathrm{cap}}^\rho(t)\) defined in \eqref{eq:general_capacitance_coordinate_matrix}, because
\[
\left\langle e_i^\rho,\mathcal T^\rho(t)e_j^\rho\right\rangle_{\mathcal H_k(\Gamma^\rho)} =\Lambda_i^\rho\bigl(\mathcal T^\rho(t)\psi_j^\rho\bigr).
\]

Set
\[
e_+^\rho:=\sum_{i=1}^M e_i^\rho, \qquad E_+^\rho:=\operatorname{span}\{e_+^\rho\}, \qquad E_-^\rho:=\left\{\sum_{i=1}^M c_i e_i^\rho:\sum_{i=1}^M c_i=0\right\}.
\]
Then \(E_{\mathrm{cap}}^\rho=E_+^\rho\oplus E_-^\rho\), and simplex symmetry gives scalar loops \(\mu_\pm^\rho\) and same-scale values \(z_\pm^\rho\) characterized by
\begin{equation}
\mathsf C_\beta^\rho(t)\big|_{E_\pm^\rho} = \mu_\pm^\rho(t)I_{E_\pm^\rho},
 \qquad
P_{\mathrm{cap}}^\rho B_0^{k,\rho}\big|_{E_\pm^\rho}=z_\pm^\rho I_{E_\pm^\rho}.
\label{eq:simplex_same_scale_values}
\end{equation}
Thus, \(\mu_\pm^\rho-z_\pm^\rho\) contain only the nonzero-scale BFZ terms. Since \(F_{B_1}(s\omega)=j_0(s)>0\) for \(0\le s\le c_\ast<\pi\), Theorem~\ref{thm:general_capacitance_simplex_winding} applies directly to this projected matrix family.

\subsection{Comparison with the invariant Riesz reduction}
\label{subsec:isolated_cluster_component_constant_winding}

The preceding winding is computed for the fixed matrix family obtained by projecting onto the static kernel. We now prove that, under the same small-component and point-gap conditions, it agrees with the winding of the exact restriction to an invariant spectral subspace of the full BFZ boundary symbol. For a general shape, the Riesz projection of the block-diagonal self-interaction operator at nonzero wavenumber maps the static kernel isomorphically onto its nearby spectral subspace; a second Riesz projection selects the corresponding invariant subspace of the full boundary symbol. For spheres, the first Riesz range is the static kernel itself. We retain the abstract notation of Subsection~\ref{subsec:finite_dimensional_scale_bloch_winding} without redefining those objects.

Writing \(\iota_E:E\hookrightarrow\mathcal H_\Gamma\) for inclusion, the previously defined frame of the Riesz range is \(U_E^\Gamma=\Pi^\Gamma\iota_E\). Whenever it is an isomorphism onto \(\operatorname{Ran}\Pi^\Gamma(t)\), define its dual frame by
\begin{equation}
\label{eq:riesz_biorthogonal_frames}
\bigl(V_E^\Gamma(t)\bigr)^\dagger := \bigl(U_E^\Gamma(t)\bigr)^{-1}\Pi^\Gamma(t).
\end{equation}
Here, \((U_E^\Gamma)^{-1}\) is used only on \(\operatorname{Ran}\Pi^\Gamma\). Since \((\Pi^\Gamma)^2=\Pi^\Gamma\), these frames satisfy the exact identities
\begin{equation}
\label{eq:riesz_biorthogonal_identity}
\bigl(V_E^\Gamma(t)\bigr)^\dagger U_E^\Gamma(t)=I_E, \qquad \Pi^\Gamma(t)=U_E^\Gamma(t)\bigl(V_E^\Gamma(t)\bigr)^\dagger,
\end{equation}
and, because \(\mathcal T_{k,\beta}^\Gamma(t)\) commutes with its Riesz projection,
\begin{equation}
\label{eq:riesz_biorthogonal_reduction}
M_E^\Gamma(t)=\bigl(V_E^\Gamma(t)\bigr)^\dagger\mathcal T_{k,\beta}^\Gamma(t)U_E^\Gamma(t).
\end{equation}
Thus, the Riesz reduction has an exact biorthogonal frame representation, although the frame vectors need not be individual eigenvectors.

The next proposition quantifies this comparison: the fixed-space projected family and the exact Riesz-reduced family differ quadratically in the perturbation, whereas their frames vary linearly.

\begin{proposition}[Quadratic Riesz reduction and frame approximation]
\label{prop:finite_cluster_reduction_nearby_loops}
Set
\[
\varepsilon:=\sup_{0\le t\le2\pi} \|\mathcal T_{k,\beta}^\Gamma(t)-\mathcal T_{\mathrm{ref}}^\Gamma\|_{\mathcal L(\mathcal H_\Gamma)}.
\]
There exist \(\varepsilon_0>0\) and \(C<\infty\), depending only on \(\mathcal T_{\mathrm{ref}}^\Gamma\), \(\Pi^{\mathrm{ref}}\), and \(\mathscr C\), such that, if \(\varepsilon\le\varepsilon_0\), then \(\mathscr C\) remains in the resolvent set and encloses a spectral cluster whose Riesz projection has rank \(\dim E\), \(U_E^\Gamma(t)\) is an isomorphism for every \(t\), and
\begin{equation}
\label{eq:riesz_frame_approximation}
\sup_t\left(\|\Pi^\Gamma(t)-\Pi^{\mathrm{ref}}\|+\|U_E^\Gamma(t)-\iota_E\|+\|V_E^\Gamma(t)-(\Pi^{\mathrm{ref}})^\dagger\iota_E\|\right) \le C\varepsilon.
\end{equation}
Moreover,
\begin{equation}
\label{eq:finite_cluster_reduction_expansion}
\sup_t\left\|M_E^\Gamma(t)-\Pi^{\mathrm{ref}}\mathcal T_{k,\beta}^\Gamma(t)|_E\right\|_{\mathcal L(E)}\le C\varepsilon^2.
\end{equation}
If
\[
M_E^\Gamma(t)r=\nu r, \qquad \bigl(M_E^\Gamma(t)\bigr)^\dagger\ell=\overline\nu\,\ell, \qquad \ell^\dagger r=1,
\]
then \(U_E^\Gamma(t)r\) and \(V_E^\Gamma(t)\ell\) are exact right and left eigenvectors of \(\mathcal T_{k,\beta}^\Gamma(t)\), are biorthogonally normalized, and satisfy
\[
\|(U_E^\Gamma(t)-\iota_E)r\|\le C\varepsilon\|r\|,
\qquad
\|\bigl(V_E^\Gamma(t)-(\Pi^{\mathrm{ref}})^\dagger\iota_E\bigr)\ell\| \le C\varepsilon\|\ell\|.
\]
\end{proposition}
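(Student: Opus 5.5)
The plan is standard analytic perturbation theory for Riesz projections, organized so that the quadratic bound comes from one algebraic identity rather than from a second-order expansion of the reduced matrix.

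\emph{Step 1: the contour and the projection.} Write \(D(t):=\mathcal T_{k,\beta}^\Gamma(t)-\mathcal T_{\mathrm{ref}}^\Gamma\) and \(R_0(\zeta):=(\zeta I-\mathcal T_{\mathrm{ref}}^\Gamma)^{-1}\). By compactness of \(\mathscr C\), \(r_0:=\sup_{\zeta\in\mathscr C}\|R_0(\zeta)\|<\infty\). If \(\varepsilon r_0<1/2\), a Neumann series shows that \(\zeta I-\mathcal T_{k,\beta}^\Gamma(t)=(\zeta I-\mathcal T_{\mathrm{ref}}^\Gamma)(I-R_0D)\) is invertible on \(\mathscr C\), with resolvent bounded by \(2r_0\). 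The second resolvent identity gives
\[
\Pi^\Gamma(t)=\Pi^{\mathrm{ref}}+\Pi_1(t)+\Pi_2(t),\qquad \Pi_1:=\frac1{2\pi i}\int_{\mathscr C}R_0DR_0\,d\zeta,\qquad \|\Pi_1\|\le C\varepsilon,\quad \|\Pi_2\|\le C\varepsilon^2 .
\]
In particular \(\|\Pi^\Gamma(t)-\Pi^{\mathrm{ref}}\|\le C\varepsilon<1\). Two projections at distance less than one have equal rank, so the rank is \(\dim E\). Moreover, \(\|U_E^\Gamma-\iota_E\|=\|(\Pi^\Gamma-\Pi^{\mathrm{ref}})\iota_E\|\le C\varepsilon\). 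For the same reason \(\Pi^{\mathrm{ref}}U_E^\Gamma\) is invertible on \(E\), so \(U_E^\Gamma\) is injective; equal dimensions then make it an isomorphism onto \(\operatorname{Ran}\Pi^\Gamma\). The estimate for \(V_E^\Gamma\) follows from its definition \eqref{eq:riesz_biorthogonal_frames}, using the bound on \((U_E^\Gamma)^{-1}\) and \(\Pi^\Gamma=\Pi^{\mathrm{ref}}+O(\varepsilon)\). This proves \eqref{eq:riesz_frame_approximation}.

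\emph{Step 2: the quadratic estimate, which is the key step.} The key observation is \(\Pi^{\mathrm{ref}}\Pi_1\Pi^{\mathrm{ref}}=0\). To see this, apply \((\Pi^\Gamma)^2=\Pi^\Gamma\) to the family \(\mathcal T_{\mathrm{ref}}^\Gamma+sD\). Differentiating at \(s=0\) gives \(\Pi_1=\Pi^{\mathrm{ref}}\Pi_1+\Pi_1\Pi^{\mathrm{ref}}\), and sandwiching between two copies of \(\Pi^{\mathrm{ref}}\) yields the claim. Consequently \(\Pi^{\mathrm{ref}}U_E^\Gamma=I_E+\Pi^{\mathrm{ref}}\Pi_2\iota_E=I_E+O(\varepsilon^2)\).

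Next, \(\mathcal T_{k,\beta}^\Gamma U_E^\Gamma=U_E^\Gamma M_E^\Gamma\), because the symbol commutes with its Riesz projection. Applying \(\Pi^{\mathrm{ref}}\) gives
\[
M_E^\Gamma=(\Pi^{\mathrm{ref}}U_E^\Gamma)^{-1}\,\Pi^{\mathrm{ref}}\mathcal T_{k,\beta}^\Gamma U_E^\Gamma .
\]
Expand \(\Pi^{\mathrm{ref}}\mathcal T_{k,\beta}^\Gamma U_E^\Gamma=\Pi^{\mathrm{ref}}\mathcal T_{k,\beta}^\Gamma\iota_E+\Pi^{\mathrm{ref}}\mathcal T_{k,\beta}^\Gamma\Pi_1\iota_E+O(\varepsilon^2)\). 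In the middle term, replace \(\mathcal T_{k,\beta}^\Gamma\) by \(\mathcal T_{\mathrm{ref}}^\Gamma\) at cost \(O(\varepsilon^2)\). Then, using that \(\mathcal T_{\mathrm{ref}}^\Gamma\) commutes with \(\Pi^{\mathrm{ref}}\) and that \(\iota_E=\Pi^{\mathrm{ref}}\iota_E\), the middle term becomes \(\mathcal T_{\mathrm{ref}}^\Gamma\Pi^{\mathrm{ref}}\Pi_1\Pi^{\mathrm{ref}}\iota_E=0\). Combining this with \((\Pi^{\mathrm{ref}}U_E^\Gamma)^{-1}=I_E+O(\varepsilon^2)\) and the uniform bound on \(\|\mathcal T_{k,\beta}^\Gamma\|\) gives \eqref{eq:finite_cluster_reduction_expansion}. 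All constants depend only on \(r_0\), the length of \(\mathscr C\), \(\|\Pi^{\mathrm{ref}}\|\) and \(\|\mathcal T_{\mathrm{ref}}^\Gamma\|\), so they are uniform in \(t\).

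\emph{Step 3: eigenvectors.} From \(\mathcal T_{k,\beta}^\Gamma U_E^\Gamma=U_E^\Gamma M_E^\Gamma\), the vector \(U_E^\Gamma r\) is a right eigenvector with eigenvalue \(\nu\). From \((V_E^\Gamma)^\dagger\mathcal T_{k,\beta}^\Gamma=(U_E^\Gamma)^{-1}\mathcal T_{k,\beta}^\Gamma\Pi^\Gamma=M_E^\Gamma(V_E^\Gamma)^\dagger\), we get \(\ell^\dagger(V_E^\Gamma)^\dagger\mathcal T_{k,\beta}^\Gamma=\nu\,\ell^\dagger(V_E^\Gamma)^\dagger\), so \(V_E^\Gamma\ell\) is a left eigenvector. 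Biorthogonality follows from \eqref{eq:riesz_biorthogonal_identity}: \((V_E^\Gamma\ell)^\dagger U_E^\Gamma r=\ell^\dagger r=1\). The two vector bounds are immediate from \eqref{eq:riesz_frame_approximation}.

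I expect the only delicate point to be Step 2, namely recognizing that the first-order term vanishes through \(P P' P=0\). Without it one would only get an \(O(\varepsilon)\) bound.
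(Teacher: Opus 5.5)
Your proposal is correct and follows essentially the paper's argument: you use the same representation \(M_E^\Gamma=(\Pi^{\mathrm{ref}}U_E^\Gamma)^{-1}\Pi^{\mathrm{ref}}\mathcal T_{k,\beta}^\Gamma U_E^\Gamma\) (the paper's \(L_Q^{-1}\Pi^{\mathrm{ref}}(\mathcal T_{\mathrm{ref}}^\Gamma+Q)\Pi_Q|_E\)), and the same key cancellation \(\Pi^{\mathrm{ref}}\Pi_1\Pi^{\mathrm{ref}}=0\) obtained by differentiating idempotency. The only difference is that you carry the explicit second-order resolvent remainder \(\Pi_2\) instead of invoking analyticity in \(Q\) and Taylor's theorem, which makes the uniformity of the constants in \(t\) slightly more transparent.
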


\begin{proof}
Fix \(t\) and write the perturbed operator as \(\mathcal T_{\mathrm{ref}}^\Gamma+Q\). Differentiating the resulting fixed-space reduction at \(Q=0\) identifies its first-order term. For a fixed \(Q\in\mathcal L(\mathcal H_\Gamma)\), let
\[
\Pi_Q:=\frac{1}{2\pi i}\int_{\mathscr C}(\zeta I-\mathcal T_{\mathrm{ref}}^\Gamma-Q)^{-1}\,d\zeta, \qquad U_Q:=\Pi_Q|_E, \qquad L_Q:=\Pi^{\mathrm{ref}}\Pi_Q|_E.
\]
The resolvent Neumann series is uniform on \(\mathscr C\) for \(\|Q\|\) small. Hence \(\Pi_Q=\Pi^{\mathrm{ref}}+O(\|Q\|)\), \(U_Q\) is an isomorphism onto \(\operatorname{Ran}\Pi_Q\), and \(L_Q=I_E+O(\|Q\|)\) is analytically invertible. Since
\[
U_Q^{-1}\Pi_Q=L_Q^{-1}\Pi^{\mathrm{ref}}\Pi_Q,
\]
the reduced operator is the fixed-space analytic family
\[
\mathsf M(Q) :=L_Q^{-1}\Pi^{\mathrm{ref}} (\mathcal T_{\mathrm{ref}}^\Gamma+Q)\Pi_Q|_E.
\]
At \(Q=0\),
\[
\mathsf M(0)=\Pi^{\mathrm{ref}} \mathcal T_{\mathrm{ref}}^\Gamma|_E, \qquad D\mathsf M(0)[Q]=\Pi^{\mathrm{ref}} Q|_E.
\]
Indeed, let \(D\Pi(0)[Q]\) denote the derivative at zero of the map
\(Q\mapsto\Pi_Q\). Differentiating \(\Pi_Q^2=\Pi_Q\) gives
\[
\Pi^{\mathrm{ref}}D\Pi(0)[Q]\Pi^{\mathrm{ref}}=0.
\]
Since \(U_Q=\Pi_Q|_E\) and \(E=\operatorname{Ran}\Pi^{\mathrm{ref}}\), this implies
\[
\Pi^{\mathrm{ref}}DU(0)[Q]=0.
\]
Differentiate
\((\mathcal T_{\mathrm{ref}}^\Gamma+Q)U_Q=U_Q\mathsf M(Q)\),
apply \(\Pi^{\mathrm{ref}}\), and use the commutation of
\(\mathcal T_{\mathrm{ref}}^\Gamma\) with its Riesz projection. This yields the stated formula for \(D\mathsf M(0)[Q]\), and therefore
\[
\mathsf M(Q)=\Pi^{\mathrm{ref}}(\mathcal T_{\mathrm{ref}}^\Gamma+Q)|_E+O(\|Q\|^2).
\]
Taking \(Q=\mathcal T_{k,\beta}^\Gamma(t)-\mathcal T_{\mathrm{ref}}^\Gamma\) proves \eqref{eq:finite_cluster_reduction_expansion}.

The resolvent estimate also gives \(\Pi^\Gamma-\Pi^{\mathrm{ref}}=O(\varepsilon)\), and hence \(U_E^\Gamma-\iota_E=O(\varepsilon)\). With \(Q=\mathcal T_{k,\beta}^\Gamma(t)-\mathcal T_{\mathrm{ref}}^\Gamma\), the fixed-space formula \((V_E^\Gamma)^\dagger=L_Q^{-1}\Pi^{\mathrm{ref}}\Pi^\Gamma\) gives \(V_E^\Gamma-(\Pi^{\mathrm{ref}})^\dagger\iota_E=O(\varepsilon)\), proving \eqref{eq:riesz_frame_approximation}. Finally, \eqref{eq:riesz_biorthogonal_identity}-\eqref{eq:riesz_biorthogonal_reduction} give
\[
\mathcal T_{k,\beta}^\Gamma U_E^\Gamma=U_E^\Gamma M_E^\Gamma, \qquad (\mathcal T_{k,\beta}^\Gamma)^\dagger V_E^\Gamma =V_E^\Gamma(M_E^\Gamma)^\dagger,
\]
which proves the eigenvector assertions.
\end{proof}

We now return to a general reference component \(D\) and compare its static kernel with the nearby range of the Riesz projection. To remove the shrinking geometry from the operator estimates, pull every component back to \(\partial D\). Set
\[
X_\rho:=\bigoplus_{i=1}^M H^{1/2}(\Gamma_i^\rho).
\]
For \(f=(f_1,\ldots,f_M)\in X_\rho\), let
\[
(\mathscr U_i^\rho f_i)(y):=f_i(p_i+\rho O_i y), \qquad y\in\partial D,
\]
and use the pullback Hilbert norm
\begin{equation}
\label{eq:general_capacitance_pullback_norm}
\|f\|_{X_\rho}^2:=\sum_{i=1}^M \|\mathscr U_i^\rho f_i\|_{H^{1/2}(\partial D)}^2.
\end{equation}
In particular,
\[
\left\|\sum_{i=1}^M c_i\psi_i^\rho\right\|_{X_\rho}=\rho^{-1}\|\psi_D\|_{H^{1/2}(\partial D)}|c|_{\ell^2},
\]
so identification through the equilibrium-density basis has a condition number independent of \(\rho\). Unless stated otherwise, all operator norms and Hilbert adjoints in the following general-shape argument, including those on its finite-dimensional subspaces, are induced by \(X_\rho\).

To isolate the nonzero-wavenumber correction before including intercomponent and interscale interactions, let
\[
B_{\mathrm{self}}^\rho:=\bigoplus_{i=1}^M B_0^{k,\Gamma_i^\rho},
\]
and choose a positively oriented, shape-dependent contour \(\mathscr C_D\) enclosing zero and no other spectral point of \(\frac12I+(K_0^{\partial D})^\ast\). Thus, \(\mathscr C_D\) isolates the static zero cluster after pullback to \(\partial D\).

The next lemma replaces the static equilibrium-density space by the nearby invariant self-interaction cluster and compares their projected BFZ families.

\begin{lemma}[Static kernel and the near-zero self-interaction cluster]
\label{lem:general_capacitance_cluster_coupling}
Fix \(k,\eta>0\) and \(\beta\in(-1,0)\). There exist \(\rho_\ast>0\), \(\lambda_\ast>1\), and \(C<\infty\) such that, whenever \(0<\rho\le\rho_\ast\) and \(\lambda\ge\lambda_\ast\), the following statements hold.

The contour \(\mathscr C_D\) encloses an \(M\)-dimensional spectral cluster of \(B_{\mathrm{self}}^\rho\). Its Riesz projection
\[
P_{\mathrm{self}}^\rho:=\frac{1}{2\pi i}\int_{\mathscr C_D}(\zeta I-B_{\mathrm{self}}^\rho)^{-1}\,d\zeta
\]
satisfies
\begin{equation}
\label{eq:general_self_capacitance_projection_comparison}
\|P_{\mathrm{self}}^\rho-P_{\mathrm{cap}}^\rho\|_{\mathcal L(X_\rho)}\le Ck\rho,
\end{equation}
and
\begin{equation}
\label{eq:general_self_cluster_resolvent}
\|B_{\mathrm{self}}^\rho\|+\sup_{\zeta\in\mathscr C_D}\|(\zeta I-B_{\mathrm{self}}^\rho)^{-1}\|\le C.
\end{equation}

The intercomponent and interscale interactions satisfy
\begin{equation}
\label{eq:general_bfz_self_coupling}
\sup_{0\le t\le2\pi}\left(\|\mathcal T^\rho(t)-B_{\mathrm{self}}^\rho\|+\|\partial_t\mathcal T^\rho(t)\|\right)\le C\rho^2.
\end{equation}

Set
\[
E_{\mathrm{self}}^\rho:=\operatorname{Ran}P_{\mathrm{self}}^\rho,\qquad \mathcal J^\rho:=P_{\mathrm{self}}^\rho|_{E_{\mathrm{cap}}^\rho}:E_{\mathrm{cap}}^\rho\longrightarrow E_{\mathrm{self}}^\rho.
\]
Then \(\mathcal J^\rho\) is an isomorphism. Using the equilibrium-density basis to identify \(E_{\mathrm{cap}}^\rho\) with \(\mathbb C^M\), define
\[
\mathsf C_{\beta,\mathrm{self}}^\rho(t):=(\mathcal J^\rho)^{-1}P_{\mathrm{self}}^\rho\mathcal T^\rho(t)|_{E_{\mathrm{self}}^\rho}\mathcal J^\rho.
\]
There exists \(q_\rho\in\mathbb C\), independent of \(t\), such that
\begin{equation}
\label{eq:general_static_dynamic_matrix_comparison}
|q_\rho|\le C(k\rho)^2,\qquad \sup_{0\le t\le2\pi}\|\mathsf C_{\beta,\mathrm{self}}^\rho(t)-\mathsf C_{\beta,\mathrm{cap}}^\rho(t)-q_\rho I_M\|\le Ck\rho^3.
\end{equation}
\end{lemma}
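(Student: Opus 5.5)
The plan is to pull everything back to the fixed reference surface \(\partial D\) through the maps \(\mathscr U_i^\rho\), so that all estimates become perturbation statements for operators on \(\bigoplus_{i=1}^M H^{1/2}(\partial D)\) with \(\rho\)-independent norms.

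\textbf{Self-interaction cluster.} Under the change of variables \(x=p_i+\rho O_i\widehat x\), the Green-function scaling gives \(G_k(p_i+\rho O_i\widehat x,p_i+\rho O_i\widehat y)=\rho^{-1}G_{k\rho}(\widehat x,\widehat y)\). The normal derivative carries one more factor \(\rho^{-1}\), and the surface measure carries \(\rho^2\). Hence the pulled-back block \(\mathscr U_i^\rho B_0^{k,\Gamma_i^\rho}(\mathscr U_i^\rho)^{-1}\) equals
\[
\tfrac12I+(K_{k\rho}^{\partial D})^\ast-i\eta k\rho\,S_{k\rho}^{\partial D}.
\]
The kernel expansion \(G_{k\rho}=G_0+ik\rho/(4\pi)+O((k\rho)^2|x-y|)\) shows the following. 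First, \((K_{k\rho})^\ast-(K_0)^\ast=O((k\rho)^2)\), because the constant term has zero normal derivative. Second, \(k\rho\,S_{k\rho}=k\rho\,S_0+O((k\rho)^2)\). Therefore the pulled-back block is \(B_0^{0,\partial D}+O(k\rho)\) in \(\mathcal L(H^{1/2}(\partial D))\). The Neumann series on \(\mathscr C_D\) then gives the resolvent bound \eqref{eq:general_self_cluster_resolvent}, the persistence of an \(M\)-dimensional cluster, and \eqref{eq:general_self_capacitance_projection_comparison}. Here Corollary~\ref{cor:static_capacitance_pairing} identifies the unperturbed Riesz projection with \(P_{\mathrm{cap}}^\rho\). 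Since \(P_{\mathrm{self}}^\rho-P_{\mathrm{cap}}^\rho=O(k\rho)\) and \(P_{\mathrm{cap}}^\rho\) is the identity on \(E_{\mathrm{cap}}^\rho\), the map \(\mathcal J^\rho\) is an isomorphism for \(\rho\) small.

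\textbf{Coupling bound \eqref{eq:general_bfz_self_coupling}.} The difference \(\mathcal T^\rho(t)-B_{\mathrm{self}}^\rho\) consists of the same-scale intercomponent blocks together with all \(n\ne0\) blocks. After pullback, each has a smooth kernel carrying a factor \(\rho^2\) from the source surface measure. The source point \(y\) and target point \(x\) lie on components separated by at least \(\min(|p_i-p_j|,\delta_\ast)\). The kernel and its tangential derivatives are \(O(1)\) in the target variable for \(n=0\) and \(n\ge1\), and \(O(\lambda^{-m})\) for \(n=-m\), exactly as in Proposition~\ref{prop:one_sided_Bn_bounds}. Summing with the weights \((1+|n|)\lambda^{\beta n}\) as in Lemma~\ref{lem:weighted_laurent_realization} gives \(C\rho^2\) for both the operator and its \(t\)-derivative.

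\textbf{Matrix comparison \eqref{eq:general_static_dynamic_matrix_comparison}.} This is the main obstacle. Write \(\mathcal T^\rho=B_{\mathrm{self}}^\rho+Q^\rho(t)\) with \(\|Q^\rho\|\le C\rho^2\). Because \(B_{\mathrm{self}}^\rho\) commutes with \(P_{\mathrm{self}}^\rho\), the self part of \(\mathsf C_{\beta,\mathrm{self}}^\rho(t)\) is \((\mathcal J^\rho)^{-1}B_{\mathrm{self}}^\rho\mathcal J^\rho\), which does not depend on \(t\). For the self part I would proceed in three steps.
\begin{itemize}
\item Since all components are congruent, the pulled-back \(B_{\mathrm{self}}^\rho\) is a direct sum of \(M\) identical copies of one operator \(\mathcal B_\rho\) on \(\partial D\), and \(P_{\mathrm{self}}^\rho\) is the matching direct sum.
\item The near-zero eigenvalue of \(\mathcal B_\rho\) is simple, because the static zero is simple by Corollary~\ref{cor:static_capacitance_pairing}. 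Hence the matrix \((\mathcal J^\rho)^{-1}B_{\mathrm{self}}^\rho\mathcal J^\rho\) equals \(q_\rho I_M\), where \(q_\rho\) is that eigenvalue.
\item First-order perturbation with the biorthogonal pair \((\psi_D,\mathbf1/\operatorname{Cap}(D))\) gives \(q_\rho=\Lambda(\text{first-order correction}\,\psi_D)+O((k\rho)^2)\). The first-order term is \(-i\eta k\rho\,\Lambda(S_0\psi_D)\), with \(S_0\psi_D=1\); I expect the constant \(ik\rho/4\pi\) term of the kernel expansion to cancel it exactly. This yields \(|q_\rho|\le C(k\rho)^2\), and the delicate step is making this cancellation rigorous.
\end{itemize}
By contrast, the leading coupling part of \(\mathsf C_{\beta,\mathrm{cap}}^\rho\) is \(P_{\mathrm{cap}}^\rho Q^\rho|_{E_{\mathrm{cap}}^\rho}\), since \(P_{\mathrm{cap}}^\rho B_{\mathrm{self}}^\rho|_{E_{\mathrm{cap}}^\rho}\) equals the \(t\)-independent same-scale self term, which is absorbed into the comparison at order \((k\rho)^2\). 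For the coupling part, the two families differ by \((\mathcal J^\rho)^{-1}P_{\mathrm{self}}^\rho Q^\rho\mathcal J^\rho-P_{\mathrm{cap}}^\rho Q^\rho\). Each substitution of \(P_{\mathrm{self}}^\rho\) or \(\mathcal J^\rho\) by its static counterpart costs \(O(k\rho)\), multiplying \(\|Q^\rho\|=O(\rho^2)\), for a total of \(O(k\rho^3)\). Uniformity in \(t\) comes from the uniform bound on \(Q^\rho\), and the condition number of the equilibrium-density basis is \(\rho\)-independent, so the estimate transfers to \(\mathbb C^M\) coordinates.
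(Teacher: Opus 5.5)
Your first two steps match the paper: the pullback of each self block to \(\frac12I+(K_{k\rho}^{\partial D})^\ast-i\eta k\rho\,S_{k\rho}^{\partial D}\), the Neumann series on \(\mathscr C_D\), the rank and isomorphism arguments, and the \(\rho^2\) kernel bounds for the coupling. Your estimate of the coupling difference \((\mathcal J^\rho)^{-1}P_{\mathrm{self}}^\rho Q^\rho\mathcal J^\rho-P_{\mathrm{cap}}^\rho Q^\rho|_{E_{\mathrm{cap}}^\rho}\) also matches.

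The gap is in the \(t\)-independent self part. You identify \(q_\rho\) with the near-zero eigenvalue \(\nu_\rho\) of \(\mathcal B_\rho\) and hope for a cancellation that makes it \(O((k\rho)^2)\). No such cancellation exists. By your own expansions, the constant term \(ik\rho/(4\pi)\) of the kernel has zero normal derivative and enters \(-i\eta k\rho\,S_{k\rho}\) only at order \((k\rho)^2\). So the first-order part of \(\mathcal B_\rho-B_0^{0,\partial D}\) is exactly \(-i\eta k\rho\,S_0^{\partial D}\). With \(\Lambda(f):=\operatorname{Cap}(D)^{-1}\int_{\partial D}f\,d\sigma\), this gives
\[
\nu_\rho=\Lambda\bigl(-i\eta k\rho\,S_0^{\partial D}\psi_D\bigr)+O((k\rho)^2)=-\frac{i\eta k\rho\,|\partial D|}{\operatorname{Cap}(D)}+O((k\rho)^2).
\]
For the unit ball one finds exactly \(\nu_\rho=e^{ik\rho}\bigl(k\rho\,j_0'(k\rho)-i\eta\sin(k\rho)\bigr)=-i\eta k\rho+O((k\rho)^2)\).

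For the same reason, \(P_{\mathrm{cap}}^\rho B_{\mathrm{self}}^\rho|_{E_{\mathrm{cap}}^\rho}=\Lambda(\mathcal B_\rho\psi_D)I_M\) is not absorbed at order \((k\rho)^2\); it carries the same nonzero first-order term. With \(q_\rho=\nu_\rho\), the bound \(|q_\rho|\le C(k\rho)^2\) fails. The residual \(\mathsf C_{\beta,\mathrm{self}}^\rho-\mathsf C_{\beta,\mathrm{cap}}^\rho-q_\rho I_M\) then contains \(-\Lambda(\mathcal B_\rho\psi_D)I_M\), of size about \(\eta k\rho|\partial D|/\operatorname{Cap}(D)\), which is far from \(O(k\rho^3)\).

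The missing idea is that the two \(O(k\rho)\) terms cancel against each other, not internally. Define
\[
q_\rho I_M:=(\mathcal J^\rho)^{-1}B_{\mathrm{self}}^\rho\mathcal J^\rho-P_{\mathrm{cap}}^\rho B_{\mathrm{self}}^\rho|_{E_{\mathrm{cap}}^\rho},
\qquad\text{that is,}\qquad q_\rho=\nu_\rho-\Lambda(\mathcal B_\rho\psi_D).
\]
This is scalar because the pulled-back diagonal blocks coincide. It is precisely the discrepancy between the exact Riesz reduction and the fixed-space projection for the constant family \(B_{\mathrm{self}}^\rho\). The derivative of the reduction at the reference operator is \(\Pi^{\mathrm{ref}}Q|_E\), so the first-order terms drop out. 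Apply Proposition~\ref{prop:finite_cluster_reduction_nearby_loops} with reference operator \(\bigoplus_iB_0^{0,\Gamma_i^\rho}\) and projection \(P_{\mathrm{cap}}^\rho\). Both are \(\rho\)-independent after pullback, so the constants are uniform, and you get \(|q_\rho|\le C\|B_{\mathrm{self}}^\rho-\bigoplus_iB_0^{0,\Gamma_i^\rho}\|^2\le C(k\rho)^2\).

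In your language, the correct first-order statement is \(\nu_\rho=\Lambda(\mathcal B_\rho\psi_D)+O((k\rho)^2)\). With this \(q_\rho\), the residual \(\mathsf C_{\beta,\mathrm{self}}^\rho-\mathsf C_{\beta,\mathrm{cap}}^\rho-q_\rho I_M\) is exactly your coupling difference, and your \(O(k\rho)\cdot O(\rho^2)\) estimate completes the proof. This is also why the paper obtains \(q_\rho=0\) for spheres, even though \(\nu_\rho\neq0\) there.
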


\begin{proof}
\emph{The self-interaction cluster.}
Under the pullback \(\mathscr U_i^\rho\), the static projection is independent of \(\rho\):
\[
\mathscr U_i^\rho P_{\mathrm{cap}}^\rho(\mathscr U_i^\rho)^{-1}g=\psi_D\frac{\displaystyle\int_{\partial D}g\,d\sigma}{\operatorname{Cap}(D)}.
\]
By Corollary~\ref{cor:static_capacitance_pairing}, this is the Riesz projection at zero of \(\frac12I+(K_0^{\partial D})^\ast\).

Set \(\kappa:=k\rho\). On the fixed boundary \(\partial D\), the standard low-wavenumber kernel expansions give
\[
\|(K_\kappa^{\partial D})^\ast-(K_0^{\partial D})^\ast\|_{\mathcal L(H^{1/2}(\partial D))}+\|\kappa(S_\kappa^{\partial D}-S_0^{\partial D})\|_{\mathcal L(H^{1/2}(\partial D))}\le C\kappa^2.
\]
Hence, with \(A_\kappa:=\frac12I+(K_\kappa^{\partial D})^\ast-i\eta\kappa S_\kappa^{\partial D}\) and \(A_0:=\frac12I+(K_0^{\partial D})^\ast\),
\[
A_\kappa=A_0-i\eta\kappa S_0^{\partial D}+O_{\mathcal L(H^{1/2}(\partial D))}(\kappa^2),
\]
so \(\|A_\kappa-A_0\|\le C\kappa\). Since \(\mathscr C_D\subset\rho(A_0)\), the Neumann series and the resolvent identity imply, for sufficiently small \(\kappa\),
\[
\sup_{\zeta\in\mathscr C_D}\|(\zeta I-A_\kappa)^{-1}\|\le C,\qquad \sup_{\zeta\in\mathscr C_D}\|(\zeta I-A_\kappa)^{-1}-(\zeta I-A_0)^{-1}\|\le C\kappa.
\]
Integrating the second estimate over \(\mathscr C_D\) and taking the direct sum over the components gives
\[
\|P_{\mathrm{self}}^\rho-P_{\mathrm{cap}}^\rho\|_{\mathcal L(X_\rho)}\le Ck\rho.
\]
The rank of a Riesz projection is locally constant under norm-continuous perturbations; hence, \(\mathscr C_D\) encloses exactly one eigenvalue, counted algebraically, from each component, and the enclosed cluster has dimension \(M\). The preceding bounds also give \eqref{eq:general_self_cluster_resolvent}.

\emph{The BFZ coupling.}
Let \(B_{ij,n}^{k,\rho}\) denote the block from component \(j\) to component \(i\). If \(n\neq0\), or if \(n=0\) and \(i\neq j\), separation gives
\[
\mathscr U_i^\rho B_{ij,n}^{k,\rho}(\mathscr U_j^\rho)^{-1}g(x)=\rho^2\int_{\partial D}L_{ij,n}^\rho(x,y)g(y)\,d\sigma(y).
\]
Annular separation and scale normalization imply
\[
\sup_{n\ge1}\sup_{|\alpha|\le1}\|\partial_x^\alpha L_{ij,n}^\rho\|_{L^\infty}\le C,\qquad \sup_{|\alpha|\le1}\|\partial_x^\alpha L_{ij,-m}^\rho\|_{L^\infty}\le C\lambda^{-m}\quad(m\ge1),
\]
and the first estimate also holds when \(n=0\) and \(i\neq j\). The corresponding \(H^1\)-output estimate, together with \(H^{1/2}\hookrightarrow L^2\) and \(H^1\hookrightarrow H^{1/2}\), yields
\[
\|B_{ij,n}^{k,\rho}\|_{\mathcal L(X_\rho)}\le C\rho^2\quad(n\ge1),\qquad \|B_{ij,-m}^{k,\rho}\|_{\mathcal L(X_\rho)}\le C\rho^2\lambda^{-m}\quad(m\ge1),
\]
while \(\|B_{ij,0}^{k,\rho}\|_{\mathcal L(X_\rho)}\le C\rho^2\) for \(i\neq j\). Since \(\beta\in(-1,0)\), the sums
\[
\sum_{n\ge1}(1+n)\lambda^{\beta n},\qquad \sum_{m\ge1}(1+m)\lambda^{-(1+\beta)m}
\]
are uniformly bounded for \(\lambda\ge\lambda_\ast\). The terms without the factors \(n\) and \(m\) control \(\mathcal T^\rho(t)-B_{\mathrm{self}}^\rho\), while those containing these factors control \(\partial_t\mathcal T^\rho(t)\). This proves \eqref{eq:general_bfz_self_coupling}.

\emph{Identification of the spectral subspaces.}
Let \(\iota_{\mathrm{cap}}^\rho:E_{\mathrm{cap}}^\rho\hookrightarrow X_\rho\) denote the inclusion. For sufficiently small \(\rho\), \eqref{eq:general_self_capacitance_projection_comparison} gives
\[
\|P_{\mathrm{self}}^\rho-P_{\mathrm{cap}}^\rho\|<1.
\]
If \(u\in E_{\mathrm{cap}}^\rho\), then
\[
\|\mathcal J^\rho u\|=\|P_{\mathrm{self}}^\rho u\|\ge\left(1-\|P_{\mathrm{self}}^\rho-P_{\mathrm{cap}}^\rho\|\right)\|u\|.
\]
Thus, \(\mathcal J^\rho\) is injective. Its domain and codomain both have dimension \(M\), so it is an isomorphism with uniformly bounded inverse.

The identities
\[
\mathcal J^\rho-\iota_{\mathrm{cap}}^\rho=(P_{\mathrm{self}}^\rho-P_{\mathrm{cap}}^\rho)\iota_{\mathrm{cap}}^\rho
\]
and
\[
(\mathcal J^\rho)^{-1}P_{\mathrm{self}}^\rho-P_{\mathrm{cap}}^\rho=(\mathcal J^\rho)^{-1}(P_{\mathrm{self}}^\rho-P_{\mathrm{cap}}^\rho)(I-P_{\mathrm{cap}}^\rho)
\]
gives the intermediate estimate
\[
\|\mathcal J^\rho-\iota_{\mathrm{cap}}^\rho\|+\|(\mathcal J^\rho)^{-1}P_{\mathrm{self}}^\rho-P_{\mathrm{cap}}^\rho\|\le Ck\rho.
\]

\emph{Comparison of the reduced matrices.}
The operators \(B_{\mathrm{self}}^\rho\), \(P_{\mathrm{self}}^\rho\), and \(P_{\mathrm{cap}}^\rho\) are block diagonal, and their diagonal blocks are identical after the component pullbacks. Consequently,
\[
(\mathcal J^\rho)^{-1}B_{\mathrm{self}}^\rho\mathcal J^\rho-P_{\mathrm{cap}}^\rho B_{\mathrm{self}}^\rho|_{E_{\mathrm{cap}}^\rho}=q_\rho I_M
\]
for some scalar \(q_\rho\in\mathbb C\).

Apply Proposition~\ref{prop:finite_cluster_reduction_nearby_loops} to the constant family \(B_{\mathrm{self}}^\rho\), taking \(\bigoplus_iB_0^{0,\Gamma_i^\rho}\) as the reference operator and \(P_{\mathrm{cap}}^\rho\) as its Riesz projection. Its quadratic reduction estimate gives
\[
|q_\rho|\le C\left\|B_{\mathrm{self}}^\rho-\bigoplus_iB_0^{0,\Gamma_i^\rho}\right\|^2\le C(k\rho)^2.
\]
The constant is uniform because, after the component pullbacks, the static reference operator, its Riesz projection, and the contour are independent of \(\rho\).

Finally, set
\[
\Delta^\rho(t):=\mathcal T^\rho(t)-B_{\mathrm{self}}^\rho.
\]
Using the definition of \(q_\rho\), a direct expansion gives
\[
\mathsf C_{\beta,\mathrm{self}}^\rho(t)-\mathsf C_{\beta,\mathrm{cap}}^\rho(t)-q_\rho I_M=\bigl((\mathcal J^\rho)^{-1}P_{\mathrm{self}}^\rho-P_{\mathrm{cap}}^\rho\bigr)\Delta^\rho(t)\mathcal J^\rho+P_{\mathrm{cap}}^\rho\Delta^\rho(t)(\mathcal J^\rho-\iota_{\mathrm{cap}}^\rho).
\]
The two identification errors are \(O(k\rho)\), \(\mathcal J^\rho\) is uniformly bounded, and \eqref{eq:general_bfz_self_coupling} gives \(\|\Delta^\rho(t)\|=O(\rho^2)\) uniformly in \(t\). Therefore
\[
\sup_{0\le t\le2\pi}\|\mathsf C_{\beta,\mathrm{self}}^\rho(t)-\mathsf C_{\beta,\mathrm{cap}}^\rho(t)-q_\rho I_M\|\le Ck\rho^3,
\]
which completes the proof.
\end{proof}

Equivariance makes \(\mathsf C_{\beta,\mathrm{self}}^\rho\) scalar on the same symmetric and sum-zero sectors as \(\mathsf C_{\beta,\mathrm{cap}}^\rho\). Let \(z_{\pm,\mathrm{self}}^\rho\) be the two sector values of its zeroth Fourier coefficient. The matrix comparison above gives
\[
z_{\pm,\mathrm{self}}^\rho-z_{\pm,\mathrm{cap}}^\rho=q_\rho+O(k\rho^3),
\]
and, for either sign,
\begin{equation}
\label{eq:general_centered_full_matrix_comparison}
\sup_t\left\|\bigl(\mathsf C_{\beta,\mathrm{self}}^\rho(t)-z_{\pm,\mathrm{self}}^\rho I_M\bigr)-\bigl(\mathsf C_{\beta,\mathrm{cap}}^\rho(t)-z_{\pm,\mathrm{cap}}^\rho I_M\bigr)\right\|\le Ck\rho^3.
\end{equation}
In particular,
\[
(z_{+,\mathrm{self}}^\rho-z_{-,\mathrm{self}}^\rho)-(z_{+,\mathrm{cap}}^\rho-z_{-,\mathrm{cap}}^\rho)=O(k\rho^3).
\]
The estimate \(q_\rho=O((k\rho)^2)\) does not ensure that this common shift is smaller than the point-gap lower bound \(O(\rho^2\lambda^\beta)\). This is why the general result uses the nonzero-wavenumber reference points \(z_{\pm,\mathrm{self}}^\rho\), defined as the sector values of the zeroth Fourier coefficient above.

We now include the \(O(\rho^2)\) intercomponent and interscale interactions. Let \(\Pi^\rho(t)\) denote the Riesz projection of the full BFZ boundary symbol on the same contour:
\[
\Pi^\rho(t):=\frac{1}{2\pi i}\int_{\mathscr C_D}(\zeta I-\mathcal T^\rho(t))^{-1}\,d\zeta,
\]
so that its range is the corresponding invariant spectral subspace. Using \(E_{\mathrm{self}}^\rho\) as the fixed reference space, let \(\iota_{\mathrm{self}}^\rho:E_{\mathrm{self}}^\rho\hookrightarrow X_\rho\) be the inclusion and define the frame and its dual by
\[
U_{\mathrm{self}}^\rho(t):=\Pi^\rho(t)\iota_{\mathrm{self}}^\rho, \qquad \bigl(V_{\mathrm{self}}^\rho(t)\bigr)^\dagger :=\bigl(U_{\mathrm{self}}^\rho(t)\bigr)^{-1}\Pi^\rho(t).
\]
Transporting the invariant restriction through \(\mathcal J^\rho\) gives the following matrix on the fixed space \(E_{\mathrm{cap}}^\rho\):
\[
\widehat{\mathsf C}_{\beta}^\rho(t):=(\mathcal J^\rho)^{-1}\bigl(U_{\mathrm{self}}^\rho(t)\bigr)^{-1}\mathcal T^\rho(t)|_{\operatorname{Ran}\Pi^\rho(t)}U_{\mathrm{self}}^\rho(t)\mathcal J^\rho.
\]
Thus, \(\widehat{\mathsf C}_{\beta}^\rho\) is similar to the restriction of the BFZ symbol to its invariant spectral subspace.

The following corollary transfers the winding of the projected BFZ matrix to the exact Riesz-reduced family.

\begin{corollary}[Winding of the Riesz-reduced family for a regular-simplex configuration]
\label{cor:general_capacitance_riesz_winding}
Under the hypotheses of Theorem~\ref{thm:general_capacitance_simplex_winding}, after decreasing \(\rho_0\) and increasing \(\lambda_0\) if necessary, while retaining \(k\lambda\rho\le c_\ast\), the exact Riesz-reduced family \(\widehat{\mathsf C}_{\beta}^\rho(t)\) is well-defined. Under the equilibrium-density identification of \(E_{\mathrm{cap}}^\rho\) with \(\mathbb C^M\), it is scalar on the symmetric line and the sum-zero subspace; denote the corresponding scalar loops by \(\widehat\mu_+^\rho\) and \(\widehat\mu_-^\rho\).

The reduced family has point gaps at \(z_{+,\mathrm{self}}^\rho\) and \(z_{-,\mathrm{self}}^\rho\), and, for both choices of sign,
\[
\operatorname{Wind}\bigl(\widehat\mu_\pm^\rho-z_{\pm,\mathrm{self}}^\rho,0\bigr)=-1.
\]
Moreover,
\begin{equation}
\label{eq:general_exact_capacitance_windings}
\begin{aligned}
\operatorname{Wind}\left(\det\bigl(\widehat{\mathsf C}_{\beta}^\rho(\cdot)-z_{+,\mathrm{self}}^\rho I_M\bigr),0\right)&=-1,\qquad
\operatorname{Wind}\left(\det\bigl(\widehat{\mathsf C}_{\beta}^\rho(\cdot)-z_{-,\mathrm{self}}^\rho I_M\bigr),0\right)&=-(M-1).
\end{aligned}
\end{equation}
\end{corollary}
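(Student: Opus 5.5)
The plan is to apply Proposition~\ref{prop:finite_cluster_reduction_nearby_loops} with reference operator \(\mathcal T_{\mathrm{ref}}^\Gamma:=B_{\mathrm{self}}^\rho\), reference projection \(\Pi^{\mathrm{ref}}:=P_{\mathrm{self}}^\rho\), contour \(\mathscr C_D\) and \(E:=E_{\mathrm{self}}^\rho\). We then transport everything to \(E_{\mathrm{cap}}^\rho\) through \(\mathcal J^\rho\) and compare with \(\mathsf C_{\beta,\mathrm{cap}}^\rho\) via Lemma~\ref{lem:general_capacitance_cluster_coupling}. The key point is uniformity in \(\rho\). After the component pullbacks, \eqref{eq:general_self_cluster_resolvent} bounds the resolvent of \(B_{\mathrm{self}}^\rho\) on \(\mathscr C_D\) uniformly, and the pulled-back blocks of \(B_{\mathrm{self}}^\rho\) converge to the fixed static operator. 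Hence the constants \(\varepsilon_0,C\) in Proposition~\ref{prop:finite_cluster_reduction_nearby_loops} can be taken independent of \(\rho\); one reruns its Neumann-series proof with these uniform bounds. By \eqref{eq:general_bfz_self_coupling}, \(\varepsilon=\sup_t\|\mathcal T^\rho(t)-B_{\mathrm{self}}^\rho\|\le C\rho^2\). So for \(\rho\) small, \(\Pi^\rho(t)\) has rank \(M\), \(U_{\mathrm{self}}^\rho(t)\) is an isomorphism, and \(\widehat{\mathsf C}_\beta^\rho\) is well-defined and periodic.

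\textbf{Symmetry.} Each \(Q_\sigma\) induces a unitary on \(X_\rho\) (pullback norms are permutation invariant) that commutes with \(\mathcal T^\rho(t)\), \(B_{\mathrm{self}}^\rho\), and \(P_{\mathrm{cap}}^\rho\), and permutes the equilibrium densities. It therefore commutes with both Riesz projections, with \(\mathcal J^\rho\), and with \(U_{\mathrm{self}}^\rho\). Consequently \(\widehat{\mathsf C}_\beta^\rho(t)\) commutes with the permutation matrices, and by the symmetry-sector remark it is scalar on the symmetric line and on the sum-zero space. Write its sector loops as \(\widehat\mu_\pm^\rho\).

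\textbf{Estimates.} By \eqref{eq:finite_cluster_reduction_expansion},
\[
\bigl\|\widehat{\mathsf C}_\beta^\rho(t)-\mathsf C_{\beta,\mathrm{self}}^\rho(t)\bigr\|\le C\|\mathcal J^\rho\|\,\|(\mathcal J^\rho)^{-1}\|\,\rho^4\le C\rho^4 .
\]
Here the conjugation by \(\mathcal J^\rho\) of \(P_{\mathrm{self}}^\rho\mathcal T^\rho|_{E_{\mathrm{self}}}\) is exactly \(\mathsf C_{\beta,\mathrm{self}}^\rho\). Subtracting zeroth Fourier coefficients (averages over \(t\)) costs at most a factor of two, so the centered loops \(\widehat\mu_\pm^\rho-\langle\widehat\mu_\pm^\rho\rangle\) and \(\mu_{\pm,\mathrm{self}}-z_{\pm,\mathrm{self}}^\rho\) differ by \(O(\rho^4)\). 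A subtlety arises here: the statement centers at \(z_{\pm,\mathrm{self}}^\rho\) rather than at the zeroth coefficient of \(\widehat\mu\). The discrepancy is only \(O(\rho^4)\), so either centering works. Combining with \eqref{eq:general_centered_full_matrix_comparison} gives
\[
\sup_t\bigl|(\widehat\mu_\pm^\rho(t)-z_{\pm,\mathrm{self}}^\rho)-(\mu_{\pm,\mathrm{cap}}^\rho(t)-z_{\pm,\mathrm{cap}}^\rho)\bigr|\le C(k\rho^3+\rho^4),
\]
and the analogous bound holds for the cross-sector terms.

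\textbf{Conclusion and main obstacle.} Theorem~\ref{thm:general_capacitance_simplex_winding} gives \(|\mu_{\pm,\mathrm{cap}}^\rho-z_{\pm,\mathrm{cap}}^\rho|\ge c\rho^2\lambda^\beta\) and winding \(-1\); the complementary sector stays at distance \(\ge c\rho^2\) from the reference point. The main obstacle is that the error \(Ck\rho^3\) must be beaten by the gap \(c\rho^2\lambda^\beta\), i.e.\ we need \(\rho\lambda^{-\beta}\ll1\). The constraint \(k\lambda\rho\le c_\ast\) only gives \(\rho\le c_\ast/(k\lambda)\), whence \(\rho\lambda^{-\beta}\le (c_\ast/k)\lambda^{-1-\beta}\), which tends to zero as \(\lambda\to\infty\) since \(\beta>-1\). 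So after increasing \(\lambda_0\) (and shrinking \(\rho_0\)) the perturbation is below the gap. Theorem~\ref{thm:finite_dimensional_winding_stability}, applied sectorwise with the straight-line homotopy, then shows each \(\widehat\mu_\pm^\rho-z_{\pm,\mathrm{self}}^\rho\) has winding \(-1\). For the complementary sector we use \(|z_{+,\mathrm{self}}^\rho-z_{-,\mathrm{self}}^\rho|\ge c\rho^2-O(k\rho^3)\), which gives winding zero. Finally, the factorization \(\det(\widehat{\mathsf C}_\beta^\rho-z)=(\widehat\mu_+^\rho-z)(\widehat\mu_-^\rho-z)^{M-1}\) yields \eqref{eq:general_exact_capacitance_windings}.
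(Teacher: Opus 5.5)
Your proposal is correct and follows essentially the same route as the paper. Like the paper, you apply Proposition~\ref{prop:finite_cluster_reduction_nearby_loops} uniformly in \(\rho\) with reference data \((B_{\mathrm{self}}^\rho,P_{\mathrm{self}}^\rho,E_{\mathrm{self}}^\rho)\) to get the \(O(\rho^4)\) comparison with \(\mathsf C_{\beta,\mathrm{self}}^\rho\), use simplex equivariance and Schur's lemma for the sector structure, and beat the \(Ck\rho^3+C\rho^4\) error with the gap \(c\rho^2\lambda^\beta\) via \(k\lambda\rho\le c_\ast\); the differences are cosmetic (you transfer windings sectorwise and then factor the determinant instead of using two straight-line matrix homotopies, and the detour through the zeroth Fourier coefficient of \(\widehat\mu_\pm^\rho\) is unnecessary, since \(\widehat\mu_\pm^\rho-z_{\pm,\mathrm{self}}^\rho\) can be compared directly with the corresponding sector loop of \(\mathsf C_{\beta,\mathrm{self}}^\rho-z_{\pm,\mathrm{self}}^\rho I_M\)).
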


\begin{proof}
Set
\[
\Delta^\rho(t):=\mathcal T^\rho(t)-B_{\mathrm{self}}^\rho.
\]
Equations~\eqref{eq:general_self_cluster_resolvent} and \eqref{eq:general_bfz_self_coupling} give
\[
\sup_t\|\Delta^\rho(t)\|\le C\rho^2
\]
together with uniform bounds for \(B_{\mathrm{self}}^\rho\), its Riesz projection \(P_{\mathrm{self}}^\rho\), and its resolvent on \(\mathscr C_D\). Proposition~\ref{prop:finite_cluster_reduction_nearby_loops} therefore holds uniformly with
\[
\mathcal T_{\mathrm{ref}}^\Gamma=B_{\mathrm{self}}^\rho, \qquad \Pi^{\mathrm{ref}}=P_{\mathrm{self}}^\rho, \qquad E=E_{\mathrm{self}}^\rho.
\]
After decreasing \(\rho_0\), the contour \(\mathscr C_D\) remains in the resolvent set of \(\mathcal T^\rho(t)\), the corresponding Riesz projection \(\Pi^\rho(t)\) has rank \(M\), and
\[
U_{\mathrm{self}}^\rho(t)=\Pi^\rho(t)|_{E_{\mathrm{self}}^\rho}
\]
is an isomorphism onto \(\operatorname{Ran}\Pi^\rho(t)\). Hence \(V_{\mathrm{self}}^\rho(t)\) and \(\widehat{\mathsf C}_{\beta}^\rho(t)\) are well-defined.

The frame and quadratic reduction estimates in Proposition~\ref{prop:finite_cluster_reduction_nearby_loops} give
\begin{equation}
\label{eq:general_riesz_frame_estimates}
\sup_t\left( \|\Pi^\rho(t)-P_{\mathrm{self}}^\rho\| +\|U_{\mathrm{self}}^\rho(t)-\iota_{\mathrm{self}}^\rho\| +\|V_{\mathrm{self}}^\rho(t) -(P_{\mathrm{self}}^\rho)^\dagger\iota_{\mathrm{self}}^\rho\|\right) \le C\rho^2
\end{equation}
and, after conjugation by the uniformly bounded maps \(\mathcal J^\rho\) and \((\mathcal J^\rho)^{-1}\),
\begin{equation}
\label{eq:general_exact_dynamic_reduction_comparison}
\sup_t \|\widehat{\mathsf C}_{\beta}^\rho(t) -\mathsf C_{\beta,\mathrm{self}}^\rho(t)\| \le C\rho^4.
\end{equation}

For completeness, \eqref{eq:general_self_capacitance_projection_comparison} and the identities
\[
\begin{aligned}
\Pi^\rho(t)-P_{\mathrm{cap}}^\rho&=\Pi^\rho(t)-P_{\mathrm{self}}^\rho+P_{\mathrm{self}}^\rho-P_{\mathrm{cap}}^\rho,\\
U_{\mathrm{self}}^\rho(t)\mathcal J^\rho-\iota_{\mathrm{cap}}^\rho&=\bigl(U_{\mathrm{self}}^\rho(t)-\iota_{\mathrm{self}}^\rho\bigr)\mathcal J^\rho+\bigl(P_{\mathrm{self}}^\rho-P_{\mathrm{cap}}^\rho\bigr)\iota_{\mathrm{cap}}^\rho
\end{aligned}
\]
yield the intermediate estimate
\[
\sup_t\|\Pi^\rho(t)-P_{\mathrm{cap}}^\rho\|+\sup_t\|U_{\mathrm{self}}^\rho(t)\mathcal J^\rho-\iota_{\mathrm{cap}}^\rho
\|
\le C(k\rho+\rho^2).
\]
Thus, the exact invariant spectral subspace is approximated by combinations of the static equilibrium densities.

The definition of \(\widehat{\mathsf C}_{\beta}^\rho(t)\) gives the exact intertwining identity
\[
\mathcal T^\rho(t)U_{\mathrm{self}}^\rho(t)\mathcal J^\rho=U_{\mathrm{self}}^\rho(t)\mathcal J^\rho\widehat{\mathsf C}_{\beta}^\rho(t).
\]
For every simplex symmetry \(Q_\sigma\), the orthogonal invariance of the Green function and the surface measure imply that the induced action \(\mathcal R_\sigma\) commutes with \(B_{\mathrm{self}}^\rho\) and \(\mathcal T^\rho(t)\), and hence with \(P_{\mathrm{self}}^\rho\) and \(\Pi^\rho(t)\). Consequently, \(\mathcal J^\rho\) and \(U_{\mathrm{self}}^\rho(t)\) intertwine the corresponding actions, so \(\widehat{\mathsf C}_{\beta}^\rho(t)\) commutes with the simplex permutation representation on \(E_{\mathrm{cap}}^\rho\). Schur's lemma then shows that it is scalar on the symmetric line and the irreducible sum-zero subspace, with scalar loops \(\widehat\mu_+^\rho\) and \(\widehat\mu_-^\rho\). The intertwining identity also shows that the corresponding vectors transported by \(U_{\mathrm{self}}^\rho(t)\mathcal J^\rho\) are exact right eigenvectors of \(\mathcal T^\rho(t)\).

It remains to transfer the point gaps and windings. Define
\[
\begin{aligned}
A_{\pm,\mathrm{cap}}^\rho(t)&:=\mathsf C_{\beta,\mathrm{cap}}^\rho(t)-z_{\pm,\mathrm{cap}}^\rho I_M,\quad
A_{\pm,\mathrm{self}}^\rho(t)&:=\mathsf C_{\beta,\mathrm{self}}^\rho(t)-z_{\pm,\mathrm{self}}^\rho I_M,\quad \widehat A_\pm^\rho(t)&:=\widehat{\mathsf C}_{\beta}^\rho(t)-z_{\pm,\mathrm{self}}^\rho I_M.
\end{aligned}
\]
Theorem~\ref{thm:general_capacitance_simplex_winding} gives
\[
\inf_t\sigma_{\min}\bigl(A_{\pm,\mathrm{cap}}^\rho(t)\bigr) \ge c\rho^2\lambda^\beta.
\]
Moreover, \eqref{eq:general_centered_full_matrix_comparison} and \eqref{eq:general_exact_dynamic_reduction_comparison} imply
\[
\sup_t\|A_{\pm,\mathrm{self}}^\rho(t)-A_{\pm,\mathrm{cap}}^\rho(t)\| \le Ck\rho^3,
\qquad
\sup_t\| \widehat A_\pm^\rho(t) -A_{\pm,\mathrm{self}}^\rho(t)\| \le C\rho^4.
\]
Using \(k\lambda\rho\le c_\ast\), the ratios of these errors to the projected point-gap bound satisfy
\[
\frac{Ck\rho^3}{\rho^2\lambda^\beta} \le Cc_\ast\lambda^{-(1+\beta)}, \qquad \frac{C\rho^4}{\rho^2\lambda^\beta} \le \frac{Cc_\ast^2}{k^2}\lambda^{-(2+\beta)}.
\]
Both tend to zero as \(\lambda\to\infty\). We may therefore choose \(\lambda_0\) sufficiently large and \(\rho_0\) sufficiently small so that
\[
Ck\rho^3+C\rho^4<c\rho^2\lambda^\beta.
\]

For either sign, consider
\[
\begin{aligned}
H_\pm^{(1)}(s,t)&:=(1-s)A_{\pm,\mathrm{cap}}^\rho(t)+sA_{\pm,\mathrm{self}}^\rho(t),\qquad H_\pm^{(2)}(s,t)&:=(1-s)A_{\pm,\mathrm{self}}^\rho(t)+s\widehat A_\pm^\rho(t).
\end{aligned}
\]
The singular-value perturbation inequality shows that both homotopies are invertible for every \((s,t)\in[0,1]\times[0,2\pi]\). Thus, \(\widehat{\mathsf C}_{\beta}^\rho\) has the asserted point gaps.

The homotopies commute with the simplex action and therefore preserve both symmetry sectors. Homotopy invariance transfers the sector winding \(-1\) from Theorem~\ref{thm:general_capacitance_simplex_winding} to \(\widehat\mu_\pm^\rho-z_{\pm,\mathrm{self}}^\rho\). Applying the same argument to the full determinant loops and using \eqref{eq:general_capacitance_winding_conclusion} gives \eqref{eq:general_exact_capacitance_windings}.
\end{proof}

The preceding conclusions are uniform for a family \(D_\tau\), \(0\le\tau\le1\), compact in \(C^{2,\alpha}\), provided that separation, equivariance, isolation of the static zero cluster, and the coefficient and resolvent bounds hold uniformly. If \(c_\ast\sup_{0\le\tau\le1}\sup_{y\in D_\tau}|y|<1\), then the thresholds can be chosen independently of \(\tau\), the pulled-back reduced loops and corrected reference points vary continuously, and the two determinant windings remain \(-1\) and \(-(M-1)\).

For spheres, rotational invariance gives \(P_{\mathrm{self}}^\rho=P_{\mathrm{cap}}^\rho\) exactly. Hence \(\mathcal J^\rho=I\), \(q_\rho=0\), and \(z_{\pm,\mathrm{self}}^\rho=z_\pm^\rho\). Consider the previously defined \(B_{\mathrm{self}}^\rho\) as an operator on \(\mathcal H_k(\Gamma^\rho)\), and set
\[
\mathscr C_0:=\{\zeta\in\mathbb C:|\zeta|=1/6\}.
\]
In the preceding Riesz construction, take \(\mathscr C_D=\mathscr C_0\), write
\(\iota^\rho:E_{\mathrm{cap}}^\rho\hookrightarrow\mathcal H_k(\Gamma^\rho)\) for the inclusion, and abbreviate \(U_{\mathrm{self}}^\rho,V_{\mathrm{self}}^\rho\) by \(U^\rho,V^\rho\). Then \(\widehat{\mathsf C}_\beta^\rho\) is the exact reduction in \eqref{eq:riesz_biorthogonal_frames}-\eqref{eq:riesz_biorthogonal_reduction}, while \(\mathsf C_\beta^\rho\) is the corresponding computable projected BFZ family on the fixed space. The following corollary records the resulting spherical winding specialization; the sphere-specific norm estimates enter only in its proof.

\begin{corollary}[Spherical regular-simplex winding of the Riesz-reduced family]
\label{cor:spherical_simplex_riesz_specialization}
Fix \(M\in\{2,3,4\}\), \(k,R,\eta>0\), \(\beta\in(-1/2,0)\), and \(c_\ast\in(0,\pi)\), and assume that \(0<kR<\pi/2\). There exist \(\rho_1>0\) and \(\lambda_1>1\) such that, whenever \(0<\rho\le\rho_1\), \(\lambda\ge\lambda_1\), and \(k\lambda\rho\le c_\ast\), the exact Riesz-reduced family \(\widehat{\mathsf C}_\beta^\rho\) is well-defined and scalar on the two simplex sectors:
\[
\widehat{\mathsf C}_\beta^\rho(t)|_{E_\pm^\rho}=\widehat\mu_\pm^\rho(t)I_{E_\pm^\rho}.
\]
It has point gaps at \(z_+^\rho\) and \(z_-^\rho\), and
\[
\operatorname{Wind}(\widehat\mu_\pm^\rho-z_\pm^\rho,0)=-1.
\]
Moreover,
\begin{equation}
\label{eq:spherical_exact_windings}
\begin{aligned}
\operatorname{Wind}\left(\det\bigl(\widehat{\mathsf C}_\beta^\rho(\cdot)-z_+^\rho I_M\bigr),0\right)&=-1,\qquad \operatorname{Wind}\left(\det\bigl(\widehat{\mathsf C}_\beta^\rho(\cdot)-z_-^\rho I_M\bigr),0\right)&=-(M-1).
\end{aligned}
\end{equation}
\end{corollary}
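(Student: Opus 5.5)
The plan is to specialize the proof of Corollary~\ref{cor:general_capacitance_riesz_winding} to spheres, using the exact rotational identity \(P_{\mathrm{self}}^\rho=P_{\mathrm{cap}}^\rho\). This removes the intermediate comparison step of Lemma~\ref{lem:general_capacitance_cluster_coupling}, and we only need to redo the norm bounds in \(\mathcal H_k(\Gamma^\rho)\).

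\emph{Step 1: the self-interaction cluster for spheres.} On \(\partial B_\rho(p_i)\), every layer potential is diagonal in spherical harmonics. The constant mode is an eigenvector of \(B_0^{k,\Gamma_i^\rho}\) with eigenvalue \(\nu_0(\kappa)\), \(\kappa=k\rho\), where \(\nu_0(0)=0\). The modes of degree \(\ell\ge1\) have eigenvalues near \(\frac12-\frac{1}{2(2\ell+1)}\ge\frac13\). Hence \(\mathscr C_0=\{|\zeta|=1/6\}\) isolates exactly the constants for small \(\kappa\). This gives \(P_{\mathrm{self}}^\rho=P_{\mathrm{cap}}^\rho\), the orthogonal \(\mathcal H_k\)-projection, and it also gives the resolvent bound \(\sup_{\mathscr C_0}\|(\zeta-B_{\mathrm{self}}^\rho)^{-1}\|\le C\) uniformly in \(\rho\), since the operator is normal on each component. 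In particular \(\mathcal J^\rho=I\), \(q_\rho=0\), \(\mathsf C_{\beta,\mathrm{self}}^\rho=\mathsf C_\beta^\rho\), and \(z_{\pm,\mathrm{self}}^\rho=z_\pm^\rho\), as recorded before the statement.

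\emph{Step 2: the coupling bound in \(\mathcal H_k(\Gamma^\rho)\).} This is the main obstacle, because the natural norm is now the \(\rho\)-dependent \(\mathcal H_k\) norm rather than the pullback norm \(X_\rho\). I would prove \(\sup_t\|\mathcal T^\rho(t)-B_{\mathrm{self}}^\rho\|_{\mathcal L(\mathcal H_k(\Gamma^\rho))}\le C\rho^2\) as follows.
\begin{itemize}
\item Each off-diagonal or interscale block has a smooth kernel. Its mass is bounded by \(C\) for \(n\ge0\) and by \(C\lambda^{-m}\) for \(n=-m\), with \(x\)-derivatives controlled as in Lemma~\ref{lem:general_capacitance_cluster_coupling}.
\item By Schur's test on a surface of area \(4\pi\rho^2\), the \(L^2\to L^2\) norm is \(O(\rho^2)\).
\item The \(L^2\to H^1\) norm is also \(O(\rho^2)\) up to harmless powers from the comparison between \(\mathcal H_k\) and \(L^2\)/\(H^1\) on a small sphere. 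If those powers cause trouble, I would instead split the mapping into constant and mean-zero harmonics: the leading part maps constants to constants with size \(O(\rho^2)\), and the remainder gains a factor \(\rho\) from the Taylor expansion.
\item Summing over \(n\) with the weights \(\lambda^{\beta n}\) gives the claimed bound uniformly for \(\lambda\ge\lambda_1\).
\end{itemize}

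\emph{Step 3: transfer of the winding.} Apply Proposition~\ref{prop:finite_cluster_reduction_nearby_loops} with \(\mathcal T_{\mathrm{ref}}=B_{\mathrm{self}}^\rho\), \(\Pi^{\mathrm{ref}}=P_{\mathrm{cap}}^\rho\), and \(\varepsilon=C\rho^2\). This shows that \(\widehat{\mathsf C}_\beta^\rho\) is well-defined and that \(\sup_t\|\widehat{\mathsf C}_\beta^\rho-\mathsf C_\beta^\rho\|\le C\rho^4\). Subtracting the same reference points \(z_\pm^\rho\) from both families preserves this bound. Theorem~\ref{thm:general_capacitance_simplex_winding} applies to \(\mathsf C_\beta^\rho\), because \(F_{B_1}(s\omega)=j_0(s)>0\) for \(s\le c_\ast<\pi\), and it gives the gap \(\sigma_{\min}\ge c\rho^2\lambda^\beta\). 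Using \(\rho\le c_\ast/(k\lambda)\), we get \(C\rho^4/(\rho^2\lambda^\beta)\le C'\lambda^{-2-\beta}\to0\). So for \(\lambda_1\) large, the linear homotopy between \(\mathsf C_\beta^\rho-z_\pm^\rho\) and \(\widehat{\mathsf C}_\beta^\rho-z_\pm^\rho\) stays invertible, by Theorem~\ref{thm:finite_dimensional_winding_stability}.

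\emph{Step 4: symmetry sectors.} Simplex symmetry commutes with \(\mathcal T^\rho(t)\) and with \(\Pi^\rho(t)\), and \(\mathcal J^\rho=I\). Schur's lemma therefore makes \(\widehat{\mathsf C}_\beta^\rho\) scalar on \(E_\pm^\rho\). The homotopy preserves the sectors, so the sector windings \(-1\) transfer. Factoring the determinant as \((\widehat\mu_+^\rho-z)(\widehat\mu_-^\rho-z)^{M-1}\), and using that the complementary sector has winding zero as in Theorem~\ref{thm:general_capacitance_simplex_winding}, gives the windings \(-1\) and \(-(M-1)\).
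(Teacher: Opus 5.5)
Your overall route is the paper's: the exact identity $P_{\mathrm{self}}^\rho=P_{\mathrm{cap}}^\rho$ from the spherical-harmonic diagonalization, Proposition~\ref{prop:finite_cluster_reduction_nearby_loops} with reference operator $B_{\mathrm{self}}^\rho$, comparison with the gap $c\rho^2\lambda^\beta$ of Theorem~\ref{thm:general_capacitance_simplex_winding}, and a sector-preserving straight-line homotopy. The gap is in Step 2, which you identify as the main obstacle: the estimate is false as stated. In $\mathcal L(\mathcal H_k(\Gamma^\rho))$ the coupling $\mathcal T^\rho(t)-B_{\mathrm{self}}^\rho$ is of exact order $\rho^{3/2}$, not $\rho^2$.

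The cause is the $\partial_{\nu_x}G_k$ part of $\mathsf K_\eta^k$. After pullback to $\mathbb S^2$ the target normal is $\nu_x=\omega$, which rotates at unit rate, so there is no Taylor gain in $x$. For $f=\mathbf 1_{\Gamma_j^\rho}$ with $i\neq j$,
\[
(B_{ij,0}^{k,\rho}f)(p_i+\rho\omega)=4\pi\rho^2\,\omega\cdot\nabla_xG_k(p_i,p_j)+\mathrm{const}+O(\rho^3),\qquad \nabla_xG_k(p_i,p_j)\neq0 .
\]
This is an $\ell=1$ harmonic of amplitude $\asymp\rho^2$. Its $\mathcal H_k(\Gamma_i^\rho)$-norm is $\asymp\rho^{5/2}$, whereas $\|f\|_{\mathcal H_k}=(4\pi k)^{1/2}\rho$. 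Averaging in $t$ recovers $B_0^{k,\rho}-B_{\mathrm{self}}^\rho$, so $\sup_t\|\mathcal T^\rho(t)-B_{\mathrm{self}}^\rho\|\gtrsim\rho^{3/2}$.

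So your fallback (``the remainder gains a factor $\rho$ from the Taylor expansion'') fails exactly on this constant-to-nonconstant block. The ``harmless powers'' are in fact a genuine loss of $\rho^{-1/2}$. In pulled-back harmonic coefficients, constants carry the $\mathcal H_k$-weight $(k\rho^2)^{1/2}$, while modes of degree $\ell\ge1$ carry a weight comparable to $\rho^{1/2}\ell^{1/2}$. The paper's proof does this bookkeeping explicitly and obtains $C\rho^{3/2}$.

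The conclusion survives, in either of two ways.

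\emph{First repair.} Carry $\varepsilon=C\rho^{3/2}$ through Proposition~\ref{prop:finite_cluster_reduction_nearby_loops}. This gives $\sup_t\|\widehat{\mathsf C}_\beta^\rho-\mathsf C_\beta^\rho\|\le C\rho^3$. Then $C\rho^3/(c\rho^2\lambda^\beta)\le C(c_\ast/k)\lambda^{-(1+\beta)}\to0$, because $\beta>-1$.

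\emph{Second repair, the paper's.} Use the $\mathcal H_k$ bound only to make $\Pi^\rho$, $U^\rho$, $V^\rho$ well defined. Then rerun the quadratic estimate in the pullback norm $X_\rho$, where the constant mode is not penalized. There Lemma~\ref{lem:general_capacitance_cluster_coupling} gives $C\rho^2$, hence the error $C\rho^4$. The norm switch is legitimate because both reduced matrices, written in the equilibrium-density basis, do not depend on the equivalent ambient norm. That basis is also a scalar multiple of an orthonormal basis in both norms.

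With either repair, your Steps 3 and 4 go through as written.
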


\begin{proof}
After pullback to the unit sphere, the spherical harmonics diagonalize each self-interaction:
\[
\left(\frac12I+K_0^\ast\right)Y_{\ell m} =\frac{\ell}{2\ell+1}Y_{\ell m}, \qquad \ell=0,1,2,\ldots.
\]
Thus, \(\mathscr C_0\) separates the constant harmonic from the nonconstant eigenvalues, which lie in \([1/3,1/2)\). After pullback, each diagonal combined-field block is
\[
\frac12I+(K_{k\rho}^{\mathbb S^2})^\ast-i\eta(k\rho)S_{k\rho}^{\mathbb S^2},
\]
whose difference from the static block is \(O(k\rho)\). Rotational invariance preserves every spherical-harmonic sector. For sufficiently small \(k\rho\), the resolvent identity therefore shows that \(\mathscr C_0\) exactly encloses the constant harmonic of each block and excludes all sectors with \(\ell\ge1\). Consequently, the enclosed Riesz range of \(B_{\mathrm{self}}^\rho\) is \(E_{\mathrm{cap}}^\rho\), its Riesz projection is \(P_{\mathrm{cap}}^\rho\), and its contour resolvent is uniformly bounded.

For a separated interaction block \(B\), let
\[
g(y):=f(p_j+\rho y)=\sum_{\ell,m}g_{\ell m}Y_{\ell m}(y),\qquad q(x):=(Bf)(p_i+\rho x).
\]
After pullback to \(\mathbb S^2\),
\[
q(x)=\rho^2\int_{\mathbb S^2}L_{ij,n}^\rho(x,y)g(y)\,d\sigma(y).
\]
The uniform kernel bounds, including the first \(x\)-derivatives, give
\[
\|q\|_{H^1(\mathbb S^2)}\le C\rho^2\|g\|_{L^2(\mathbb S^2)}
\]
for \(n\ge1\) and for the same-scale off-diagonal blocks; when \(n=-m\), the right-hand side acquires the factor \(\lambda^{-m}\). Moreover,
\[
\|f\|_{\mathcal H_k(\Gamma_j^\rho)}^2=\rho\sum_{\ell,m}\bigl((k\rho)^2+\ell(\ell+1)\bigr)^{1/2}|g_{\ell m}|^2.
\]
If \(q=\sum_{\ell,m}q_{\ell m}Y_{\ell m}\), then \(k\rho\le c_\ast\) implies
\[
\|Bf\|_{\mathcal H_k(\Gamma_i^\rho)}^2\le C\rho\|q\|_{H^1(\mathbb S^2)}^2\le C\rho^5\|g\|_{L^2(\mathbb S^2)}^2.
\]
Write \(g=g_{\mathrm c}+g_\perp\), where \(g_{\mathrm c}=g_{00}Y_{00}\), and let \(f=f_{\mathrm c}+f_\perp\) be the corresponding decomposition. Since \(\ell(\ell+1)\ge2\) for \(\ell\ge1\),
\[
\|f_{\mathrm c}\|_{\mathcal H_k}^2=k\rho^2|g_{00}|^2,\qquad \|f_\perp\|_{\mathcal H_k}^2\ge\sqrt2\,\rho\|g_\perp\|_{L^2(\mathbb S^2)}^2.
\]
Consequently,
\[
\|Bf_{\mathrm c}\|_{\mathcal H_k}\le C\rho^{3/2}\|f_{\mathrm c}\|_{\mathcal H_k},\qquad \|Bf_\perp\|_{\mathcal H_k}\le C\rho^2\|f_\perp\|_{\mathcal H_k}.
\]
For \(n=-m\), both right-hand sides acquire the factor \(\lambda^{-m}\). Since the two input sectors are orthogonal and \(0<\rho\le1\),
\[
\begin{aligned}
\|B_0^{k,\rho}-B_{\mathrm{self}}^\rho\|&\le C\rho^{3/2},\qquad
\|B_n^{k,\rho}\|&\le C\rho^{3/2}\quad(n\ge1),\qquad
\|B_{-m}^{k,\rho}\|&\le C\rho^{3/2}\lambda^{-m}\quad(m\ge1).
\end{aligned}
\]

The corresponding weighted geometric series are uniformly summable for fixed \(\beta\in(-1,0)\) and \(\lambda\ge\lambda_1\). It follows that
\[
\sup_t\|\mathcal T^\rho(t)-B_{\mathrm{self}}^\rho\|_{\mathcal L(\mathcal H_k(\Gamma^\rho))} \le C\rho^{3/2}.
\]

The uniform contour-resolvent bound above controls the constants in Proposition~\ref{prop:finite_cluster_reduction_nearby_loops}. Applying that proposition in the ambient \(\mathcal H_k(\Gamma^\rho)\)-norm with the reference operator \(B_{\mathrm{self}}^\rho\), the reference projection \(P_{\mathrm{cap}}^\rho\), and the reference space \(E_{\mathrm{cap}}^\rho\) proves that \(\Pi^\rho(t)\), \(U^\rho(t)\), \(V^\rho(t)\), and \(\widehat{\mathsf C}_\beta^\rho(t)\) are well-defined. The same application gives the intermediate estimate
\[
\begin{aligned}
\sup_t\bigl(&\|\Pi^\rho(t)-P_{\mathrm{cap}}^\rho\| +\|U^\rho(t)-\iota^\rho\|+\|V^\rho(t)-\iota^\rho\|\bigr) \le C\rho^{3/2}.
\end{aligned}
\]

For the matrix comparison, use the pullback norm \(X_\rho \) instead. Lemma~\ref{lem:general_capacitance_cluster_coupling} gives
\[
\sup_t\|\mathcal T^\rho(t)-B_{\mathrm{self}}^\rho\|_{\mathcal L(X_\rho)} \le C\rho^2.
\]
The quadratic estimate in Proposition~\ref{prop:finite_cluster_reduction_nearby_loops} therefore yields
\[
\sup_t\|\widehat{\mathsf C}_\beta^\rho(t)-\mathsf C_\beta^\rho(t)\| \le C\rho^4.
\]
Once the equilibrium-density basis is fixed, these finite matrices are independent of the choice of an equivalent ambient norm.

Simplex equivariance shows that \(\widehat{\mathsf C}_\beta^\rho(t)\) is scalar on \(E_+^\rho\) and \(E_-^\rho\). Therefore,
\[
\begin{aligned}
\widehat{\mathsf C}_\beta^\rho(t)|_{E_\pm^\rho}&=\widehat\mu_\pm^\rho(t)I_{E_\pm^\rho},\qquad \sup_t|\widehat\mu_\pm^\rho(t)-\mu_\pm^\rho(t)|&\le C\rho^4.
\end{aligned}
\]

Since \(F_{B_1}(s\omega)=j_0(s)>0\) for \(0\le s\le c_\ast<\pi\), Theorem~\ref{thm:general_capacitance_simplex_winding} applies to \(\mathsf C_\beta^\rho\) and gives
\[
\inf_t\sigma_{\min}\bigl(\mathsf C_\beta^\rho(t)-z_\pm^\rho I_M\bigr) \ge c\rho^2\lambda^\beta.
\]
The ratio of the reduction error to this point-gap bound satisfies
\[
\frac{C\rho^4}{\rho^2\lambda^\beta} =C\rho^2\lambda^{-\beta} \le \frac{Cc_\ast^2}{k^2}\lambda^{-(2+\beta)},
\]
which tends to zero as \(\lambda\to\infty\). After increasing \(\lambda_1\) and decreasing \(\rho_1\), the straight-line homotopy
\[
\begin{aligned}
H_\pm(s,t):={}&(1-s)\bigl(\mathsf C_\beta^\rho(t)-z_\pm^\rho I_M\bigr)+s\bigl(\widehat{\mathsf C}_\beta^\rho(t)-z_\pm^\rho I_M\bigr)
\end{aligned}
\]
is invertible for every \(s\in[0,1]\) and \(t\in[0,2\pi]\). It also preserves the two simplex sectors. Homotopy invariance therefore transfers the selected-sector windings and the determinant windings of \(\mathsf C_\beta^\rho\) to \(\widehat{\mathsf C}_\beta^\rho\). Theorem~\ref{thm:general_capacitance_simplex_winding} now gives \eqref{eq:spherical_exact_windings}.
\end{proof}

We finally organize the two invariant sector windings by the recursive simplex multiplicities and record their behavior under geometric rescaling of the wavenumber.

For each \(\kappa>0\) for which the relevant Riesz projection and both point gaps exist, repeat the construction with \(k=\kappa\), and denote the exact sector loops and the corresponding reference points by \(\widehat\mu_\pm^\rho(\kappa,t)\) and \(z_{\pm,\mathrm{self}}^\rho(\kappa)\), respectively. For spheres, \(z_{\pm,\mathrm{self}}^\rho(\kappa)=z_\pm^\rho(\kappa)\), with the latter given by \eqref{eq:simplex_same_scale_values}. Whenever the point gap conditions are satisfied, define
\[
w_+^\rho(\kappa):=\operatorname{Wind}\bigl(\widehat\mu_+^\rho(\kappa,t)-z_{+,\mathrm{self}}^\rho(\kappa),0\bigr),
\qquad
w_-^\rho(\kappa):=\operatorname{Wind}\bigl(\widehat\mu_-^\rho(\kappa,t)-z_{-,\mathrm{self}}^\rho(\kappa),0\bigr).
\]
The quantities \(w_\pm^\rho(\kappa)\) record the two exact sector windings. At the fixed wavenumber \(k\), abbreviate \(w_\pm^\rho:=w_\pm^\rho(k)\). For \(N\ge1\), the finite-depth simplex data follow the recursive multiplicities. There is one symmetric root sector. For every word \(v\in I^{\ell-1}\), \(1\le\ell\le N\), its \(M\) children contribute one \((M-1)\)-dimensional sum-zero sector. Hence, level \(\ell\) has multiplicity \(M^{\ell-1}(M-1)\), and
\[
1+\sum_{\ell=1}^NM^{\ell-1}(M-1)=M^N.
\]
Only these multiplicities are used; the copied sector spaces are not asserted to be invariant subspaces of the fully coupled prefractal boundary operator. Define the levelwise local winding vector
\begin{equation}
\label{eq:simplex_component_level_vector_definition}
\mathbf W_N^{\mathrm{loc},\rho} :=\bigl(w_+^\rho,(M-1)w_-^\rho,(M-1)Mw_-^\rho,\ldots,(M-1)M^{N-1}w_-^\rho\bigr).
\end{equation}

The next proposition collects the finite-depth multiplicities and their realization along a geometric sequence of wavenumbers.

\begin{proposition}[Finite-depth simplex data and geometric wavenumber scaling]
\label{prop:localized_component_cell_vector_separated_ifs}
For parameters satisfying the hypotheses and threshold conditions of Corollary~\ref{cor:general_capacitance_riesz_winding}, for every \(N\ge1\),
\begin{equation}
\label{eq:finite_depth_simplex_data}
\mathbf W_N^{\mathrm{loc},\rho} =\bigl(-1,-(M-1),-(M-1)M,\ldots,-(M-1)M^{N-1}\bigr).
\end{equation}
The conclusion is uniform over compact wavenumber intervals. Fix the geometric data and \(\eta>0\) as in Theorem~\ref{thm:general_capacitance_simplex_winding}, let \(\beta\in(-1/2,0)\), and choose
\[
\mathcal I_\kappa=[\kappa_-,\kappa_+]\Subset\left(0,\frac{\pi}{2R}\right),
\]
together with \(c_\ast>0\) such that
\[
\inf_{0\le s\le c_\ast}|F_D(s\omega)|>0.
\]
There exist \(\rho_0>0\) and \(\lambda_0>1\) such that the two sector-winding conclusions for the Riesz-reduced family hold simultaneously for every \(\kappa\in\mathcal I_\kappa\) whenever
\[
\lambda\ge\lambda_0, \qquad 0<\rho\le\rho_0, \qquad \kappa_+\lambda\rho\le c_\ast.
\]
Fix one such pair \((\rho,\lambda)\), choose \(\kappa_\circ\in\mathcal I_\kappa\), and set \(k_j:=\lambda^j\kappa_\circ\), \(0\le j\le N-1\). The root contribution is \(-1\), and the generation-\(j\) local contribution at physical wavenumber \(k_j\) is \(-(M-1)M^j\), so successive level contributions differ by the factor \(M\). These are level-resolved local contributions after rescaling, not determinant windings of the fully coupled depth-\(N\) prefractal at the wavenumbers \(k_j\).
\end{proposition}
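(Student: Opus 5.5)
The proof has three parts. First, evaluate the vector \eqref{eq:finite_depth_simplex_data} at fixed \(k\) from Corollary~\ref{cor:general_capacitance_riesz_winding}. Second, make the thresholds uniform in \(\kappa\). Third, reinterpret the levels through the rescaling argument of Corollary~\ref{cor:general_frequency_ladder_winding}.

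For the first part, Corollary~\ref{cor:general_capacitance_riesz_winding} gives \(\operatorname{Wind}(\widehat\mu_\pm^\rho-z_{\pm,\mathrm{self}}^\rho,0)=-1\) under its threshold conditions. By definition this means \(w_+^\rho=w_-^\rho=-1\). Substituting into \eqref{eq:simplex_component_level_vector_definition} gives \eqref{eq:finite_depth_simplex_data} directly. The multiplicities \(M^{\ell-1}(M-1)\) are pure bookkeeping, and we use them only as multiplicities, as the statement stresses. Equivalently, this is \eqref{eq:general_prefractal_winding_formula} with \(W_+=-1\) and \(W_-=-1\), where \(W_-\) is the scalar sector winding.

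The second part is uniformity in \(\kappa\in\mathcal I_\kappa\); I expect this to be the main obstacle. I would revisit each threshold in the chain Lemma~\ref{lem:general_capacitance_coefficients} \(\to\) Theorem~\ref{thm:general_capacitance_simplex_winding} \(\to\) Lemma~\ref{lem:general_capacitance_cluster_coupling} \(\to\) Corollary~\ref{cor:general_capacitance_riesz_winding}, and check that every constant depends continuously on \(k\) over the compact set \(\mathcal I_\kappa\subset(0,\pi/(2R))\).

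The key lower bound is \(|a_{\pm,1}^{\rho,\mathrm{cap}}|\ge c\rho^2\), which requires three factors to stay away from zero uniformly:
\begin{itemize}
\item \(|d_\pm(\kappa R)|\) is continuous and nonvanishing on \(\mathcal I_\kappa\), hence bounded below there.
\item \(\kappa\ge\kappa_->0\) keeps the prefactor \(\eta\kappa\) away from zero.
\item \(|F_D(s\omega)|\) is bounded below for \(s=\kappa\lambda\rho\le\kappa_+\lambda\rho\le c_\ast\).
\end{itemize}
The remainder terms \(O(\rho^2(\rho+\lambda^{-1}))\) and the tail bounds \eqref{eq:general_capacitance_tail_coefficients} come from kernel estimates that are uniform for \(\kappa\) in a compact set. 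The same holds for the self-cluster estimates, which involve \(\kappa\rho\le\kappa_+\rho\).

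The final error-to-gap comparison needs \(Ck\rho^3+C\rho^4<c\rho^2\lambda^\beta\). Using \(\kappa\rho\le\kappa_+\rho\le c_\ast/\lambda\), the two ratios are bounded by \(Cc_\ast\lambda^{-(1+\beta)}\) and \(C(c_\ast/\kappa_-)^2\lambda^{-(2+\beta)}\), both uniformly in \(\kappa\). Hence one pair \((\rho_0,\lambda_0)\) works for all \(\kappa\). The sector windings are then \(-1\) for every \(\kappa\in\mathcal I_\kappa\). One could also see this from Corollary~\ref{cor:general_frequency_ladder_winding}, since the uniform gaps give a homotopy in \(\kappa\). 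The continuity in \(\kappa\) of the Riesz-reduced loops follows from norm continuity of the symbol and of the contour resolvents.

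For the third part, fix such \((\rho,\lambda)\) and \(\kappa_\circ\in\mathcal I_\kappa\), and set \(k_j=\lambda^j\kappa_\circ\). By the Green-function rescaling in the proof of Corollary~\ref{cor:general_frequency_ladder_winding}, a generation-\(j\) cell at physical wavenumber \(k_j\) is conjugate to the reference cell at effective wavenumber \(\lambda^{-j}k_j=\kappa_\circ\). Its local sector data are therefore \(w_\pm^\rho(\kappa_\circ)=-1\). Generation \(j\) has \(M^{j}\) sum-zero sectors of dimension \(M-1\), which gives the contribution \(-(M-1)M^j\), while the root contributes \(w_+^\rho=-1\). The ratio of successive contributions is then \(M\). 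We claim only these uncoupled local contributions, in line with the disclaimer that they are not windings of the fully coupled prefractal.
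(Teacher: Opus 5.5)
Your proposal is correct and follows the paper's proof essentially step for step. You read off $w_\pm^\rho=-1$ from Corollary~\ref{cor:general_capacitance_riesz_winding}, make the thresholds uniform on $\mathcal I_\kappa$ through the nonvanishing of $d_\pm(\kappa R)$ together with uniform remainder and contour-resolvent bounds, and then invoke the covariance of Corollary~\ref{cor:general_frequency_ladder_winding} with $k_j\lambda^{-j}=\kappa_\circ$; the only differences are cosmetic: you apply the corollary directly at each $\kappa$ using the uniform thresholds, and you make the $\kappa\ge\kappa_->0$ bound on the $\eta\kappa$ prefactor explicit, whereas the paper additionally runs a homotopy in $\kappa$ via Theorem~\ref{thm:finite_dimensional_winding_stability}.
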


\begin{proof}
Corollary~\ref{cor:general_capacitance_riesz_winding} gives \(w_+^\rho=w_-^\rho=-1\), so the first formula follows from the displayed multiplicities. The continuity and nonvanishing of \(d_\pm(\kappa R)\) on the compact interval \(\mathcal I_\kappa\) give a positive uniform lower bound for both leading sector coefficients. Moreover, the pulled-back layer-potential expansions and all coefficient remainders used in Theorem~\ref{thm:general_capacitance_simplex_winding} and Lemma~\ref{lem:general_capacitance_cluster_coupling} are uniform in \(\kappa\). The static contour is independent of \(\kappa\), and the uniform self-interaction estimates give a uniform contour-resolvent bound. Consequently, the constants in Proposition~\ref{prop:finite_cluster_reduction_nearby_loops}, the Riesz reductions, and the point-gap comparisons are uniform on \(\mathcal I_\kappa\), so \(\rho_0\) and \(\lambda_0\) may be chosen independently of \(\kappa\). The reduced loops and their reference points vary continuously with \(\kappa\), and Theorem~\ref{thm:finite_dimensional_winding_stability} therefore preserves both sector windings throughout \(\mathcal I_\kappa\). The boundary-operator covariance proved in Corollary~\ref{cor:general_frequency_ladder_winding}, together with \(k_j\lambda^{-j}=\kappa_\circ\), gives the stated level contributions.
\end{proof}

\subsection{Biorthogonal Zak phases and point-gap winding}
\label{subsec:symmetrization_zak_phase}

With the invariant spectral subspace selected by the Riesz projection in place, we now distinguish three geometric quantities: the biorthogonal Zak phase of that isolated subspace, the Zak phase of a globally isolated simple band, and the Zak phase of the negative spectral subspace of the chiral Hermitianization associated with a point gap. The equilibrium equation and jump relation give the static right-kernel basis, the Calder\'on identity identifies its constant dual functionals, and the parameter-dependent biorthogonal frames come from the Riesz projection.

More precisely, the connection defined by the biorthogonal Riesz frames is \((V_E^\Gamma)^\dagger\partial_tU_E^\Gamma\,dt\), whereas the point-gap logarithmic derivative is \((M_E^\Gamma-z_\ast I_E)^{-1}\partial_tM_E^\Gamma\,dt\). Their traces define scalar one-forms, but they are generally unequal. Thus, the Calder\'on identity does not by itself identify the biorthogonal BFZ Zak phase with the point-gap winding; Theorem~\ref{thm:point_gap_winding_as_zak_phase} gives the exact relation only for the chiral Hermitianization.

We return to the abstract Riesz setting of Subsection~\ref{subsec:finite_dimensional_scale_bloch_winding}. Suppose that \(t\mapsto\Pi^\Gamma(t)\) is \(C^1\) and periodic, and abbreviate
\[
U:=U_E^\Gamma, \qquad V:=V_E^\Gamma.
\]
Then \(U\) and \(V\) are \(C^1\), periodic, and satisfy \(V^\dagger U=U^\dagger V=I_E\).

The trace of the biorthogonal connection defines a phase for the full isolated spectral subspace. Geometrically, this is the Zak phase of its determinant line; when the subspace splits into globally separated simple bands, it is the sum of their bandwise phases.

\begin{proposition}[Biorthogonal Zak phase of an isolated spectral subspace]
\label{prop:determinant_line_bfz_cluster_phase}
Define the determinant-line biorthogonal BFZ Zak phase of the isolated spectral subspace by
\begin{equation}
\label{eq:determinant_line_bfz_cluster_phase}
\gamma_E^{\mathrm{BFZ}} :=-\operatorname{Im}\int_0^{2\pi}\operatorname{tr}_{E}\bigl(V(t)^\dagger\partial_tU(t)\bigr)\,dt \quad\pmod{2\pi}.
\end{equation}
It has the equivalent symmetrized form
\begin{equation}
\label{eq:symmetrized_biorthogonal_phase_identity}
\gamma_E^{\mathrm{BFZ}} \equiv\frac{i}{2}\int_0^{2\pi}\operatorname{tr}_{E}\!\left(V^\dagger\partial_tU+U^\dagger\partial_tV\right)dt \pmod{2\pi}.
\end{equation}
Both formulas are invariant modulo \(2\pi\) under every periodic \(C^1\) change of frame
\[
U\mapsto U\mathcal G, \qquad V\mapsto V(\mathcal G^{-1})^\dagger, \qquad \mathcal G:S^1\to\mathrm{GL}(E).
\]
\end{proposition}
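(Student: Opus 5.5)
The plan is to derive everything from the biorthogonality identity \(V(t)^\dagger U(t)=I_E\) in \eqref{eq:riesz_biorthogonal_identity} and from Jacobi's formula, as already used in the proof of Theorem~\ref{thm:finite_dimensional_winding_stability}. First I will record that \(U=\Pi^\Gamma\iota_E\) and \(V^\dagger=U^{-1}\Pi^\Gamma\) are \(C^1\) and periodic whenever \(\Pi^\Gamma\) is. This holds because inversion of the isomorphism \(U(t):E\to\operatorname{Ran}\Pi^\Gamma(t)\) is smooth: for instance, \(U^{-1}\Pi^\Gamma=L^{-1}\Pi^{\mathrm{ref}}\Pi^\Gamma\) with \(L=\Pi^{\mathrm{ref}}\Pi^\Gamma|_E\) invertible, as in the proof of Proposition~\ref{prop:finite_cluster_reduction_nearby_loops}. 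Taking adjoints of \(V^\dagger U=I_E\) gives \(U^\dagger V=I_E\). Hence both integrands in \eqref{eq:determinant_line_bfz_cluster_phase} and \eqref{eq:symmetrized_biorthogonal_phase_identity} are continuous periodic functions, so the integrals are well defined.

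For the symmetrized form, I will differentiate \(V^\dagger U=I_E\) to get \((\partial_tV)^\dagger U+V^\dagger\partial_tU=0\). Set \(a(t):=\operatorname{tr}_E(V^\dagger\partial_tU)\) and \(b(t):=\operatorname{tr}_E(U^\dagger\partial_tV)\). Taking traces gives \(a=-\operatorname{tr}_E((\partial_tV)^\dagger U)=-\overline{\operatorname{tr}_E(U^\dagger\partial_tV)}=-\overline b\). Therefore
\[
\frac{i}{2}(a+b)=\frac{i}{2}(a-\overline a)=-\operatorname{Im}a
\]
pointwise in \(t\). Integrating yields \eqref{eq:symmetrized_biorthogonal_phase_identity}, and in fact it holds as an exact equality of real numbers, not merely modulo \(2\pi\). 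The only point to be careful about is the identity \(\operatorname{tr}(X^\dagger)=\overline{\operatorname{tr}X}\) on the finite-dimensional Hilbert space \(E\).

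For gauge invariance, let \(U'=U\mathcal G\) and \(V'=V(\mathcal G^{-1})^\dagger\), so that \(V'^\dagger=\mathcal G^{-1}V^\dagger\). Then \(V'^\dagger U'=\mathcal G^{-1}\mathcal G=I_E\), so the new pair is again biorthogonal. The product rule and cyclicity of the trace give
\[
\operatorname{tr}_E(V'^\dagger\partial_tU')=\operatorname{tr}_E(V^\dagger\partial_tU)+\operatorname{tr}_E(\mathcal G^{-1}\partial_t\mathcal G).
\]
By Jacobi's formula, \(\operatorname{tr}_E(\mathcal G^{-1}\partial_t\mathcal G)=(\det\mathcal G)'/\det\mathcal G\). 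Its integral over one period is \(2\pi i\operatorname{Wind}(\det\mathcal G,0)\), which is well defined because \(\det\mathcal G\) is a periodic nonvanishing \(C^1\) loop. Taking \(-\operatorname{Im}\) shifts \(\gamma_E^{\mathrm{BFZ}}\) by \(-2\pi\operatorname{Wind}(\det\mathcal G,0)\in2\pi\mathbb Z\). The symmetrized form is then invariant as well, since it equals the first formula exactly.

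No step is a genuine obstacle. The most delicate point is the regularity and periodicity of \(V\), which I handle through the fixed-space formula above. The other point to check is the conjugation sign in \(a=-\overline b\); this sign is what makes the symmetrized expression real and equal to \(-\operatorname{Im}a\).
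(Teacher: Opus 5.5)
Your proof is correct and follows the paper's argument. You differentiate the biorthogonality relation and use \(\operatorname{tr}(X^\dagger)=\overline{\operatorname{tr}X}\) to get \(\frac{i}{2}(a+b)=-\operatorname{Im}a\) pointwise. You then use the product rule, cyclicity of the trace, and Jacobi's formula for \(\det\mathcal G\) to show that the phase shifts by \(-2\pi\operatorname{Wind}(\det\mathcal G,0)\); differentiating \(V^\dagger U=I_E\) instead of the paper's \(U^\dagger V=I_E\) makes no difference.

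One small caveat concerns your regularity remark, which the paper simply takes as part of its setup. The formula \(U^{-1}\Pi^\Gamma=L^{-1}\Pi^{\mathrm{ref}}\Pi^\Gamma\) needs \(L=\Pi^{\mathrm{ref}}\Pi^\Gamma|_E\) to be invertible, and that is only guaranteed near the reference operator. The formula \(V^\dagger=(U^\dagger U)^{-1}U^\dagger\Pi^\Gamma\) gives the \(C^1\) periodicity of \(V\) in general.
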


\begin{proof}
Differentiating \(U^\dagger V=I_E\) gives
\[
\operatorname{tr}_E(U^\dagger V') =-\operatorname{tr}_E((U')^\dagger V) =-\overline{\operatorname{tr}_E(V^\dagger U')}.
\]
Therefore
\[
\frac{i}{2}\operatorname{tr}_E(V^\dagger U'+U^\dagger V') =-\operatorname{Im}\operatorname{tr}_E(V^\dagger U'),
\]
which proves \eqref{eq:symmetrized_biorthogonal_phase_identity}.

Under the stated frame change,
\[
\bigl(V(\mathcal G^{-1})^\dagger\bigr)^\dagger(U\mathcal G)' =\mathcal G^{-1}(V^\dagger U')\mathcal G+\mathcal G^{-1}\mathcal G'.
\]
Taking traces and applying Jacobi's formula to \(\det\mathcal G\) gives
\[
\int_0^{2\pi}\operatorname{tr}_E(\mathcal G^{-1}\mathcal G')\,dt=2\pi i\,\operatorname{Wind}(\det\mathcal G,0).
\]
Thus, \eqref{eq:determinant_line_bfz_cluster_phase} changes by an integer multiple of \(2\pi\), proving the frame invariance.
\end{proof}

An early formulation of biorthogonal geometric phase appears in \cite{garrison_wright1988}. For rank one, \eqref{eq:symmetrized_biorthogonal_phase_identity} reduces to the symmetrized biorthogonal connection used for bands of generalized capacitance matrices in \cite{ammari_barandun_cao_feppon2023}; here it is attached to boundary-density frames of the invariant spectral subspace selected by the Riesz projection around an isolated BFZ spectral cluster.

Under the hypotheses and thresholds of Corollary~\ref{cor:general_capacitance_riesz_winding}, the Zak phase of the small-component invariant spectral subspace is perturbatively small despite the nonzero point-gap winding:
\begin{equation}
\label{eq:general_cluster_phase_bound}
\operatorname{dist}\bigl(\gamma_{E_{\mathrm{self}}^\rho}^{\mathrm{BFZ}},2\pi\mathbb Z\bigr)\le C\rho^4.
\end{equation}
Indeed, the contour bound in Lemma~\ref{lem:general_capacitance_cluster_coupling} and \eqref{eq:general_bfz_self_coupling} give a uniform resolvent for \(\mathcal T^\rho(t)\). Differentiating the Riesz formula and using \eqref{eq:general_riesz_frame_estimates} then gives
\[
\sup_t\|\partial_t\Pi^\rho(t)\|+\sup_t\|\Pi^\rho(t)-P_{\mathrm{self}}^\rho\|\le C\rho^2.
\]
Moreover, \(U^{-1}\) is uniformly bounded. Writing \(U=U_{\mathrm{self}}^\rho\), \(V=V_{\mathrm{self}}^\rho\), \(\Pi=\Pi^\rho\), and \(\iota=\iota_{\mathrm{self}}^\rho\), the identities \(U=\Pi\iota\), \(V^\dagger=U^{-1}\Pi\), and \(\Pi\Pi'\Pi=0\) yield
\[
V^\dagger U'=U^{-1}\Pi\Pi'(P_{\mathrm{self}}^\rho-\Pi)\iota=O(\rho^4)
\]
uniformly in \(t\). Formula~\eqref{eq:determinant_line_bfz_cluster_phase} proves \eqref{eq:general_cluster_phase_bound}; a constant change of equilibrium-density basis leaves the trace unchanged. Thus, this Zak phase lies within \(O(\rho^4)\) of zero modulo \(2\pi\), while the point-gap windings in \eqref{eq:general_exact_capacitance_windings} remain nonzero.

The next corollary verifies the bandwise definition for a uniformly isolated simple branch and relates the band phases to the phase of the isolated subspace.

\begin{corollary}[Bandwise BFZ Zak phase for an isolated simple band]
\label{cor:bandwise_bfz_zak_phase}
Let \(M(t):=M_E^\Gamma(t)\) be a \(C^1\) reduced loop. Suppose that \(\nu_j(t)\) is a periodic algebraically simple eigenvalue branch satisfying
\[
\inf_{0\le t\le2\pi}\operatorname{dist}\!\left(\nu_j(t), \sigma(M(t))\setminus\{\nu_j(t)\}\right)>0.
\]
Then its right and left eigenlines admit periodic \(C^1\) eigenvectors \(r_j,\ell_j\in E\) normalized by
\[
Mr_j=\nu_jr_j, \qquad M^\dagger\ell_j=\overline{\nu_j}\ell_j, \qquad \ell_j^\dagger r_j=1.
\]
Set \(u_j^R:=Ur_j\) and \(u_j^L:=V\ell_j\). Then the Zak phase in \eqref{eq:early_bandwise_bfz_zak_phase} is given by
\begin{equation}
\label{eq:bandwise_bfz_zak_phase}
\gamma_j^{\mathrm{BFZ}} :=-\operatorname{Im}\int_0^{2\pi} (u_j^L)^\dagger\partial_tu_j^R\,dt =-\operatorname{Im}\int_0^{2\pi} \left[\ell_j^\dagger r_j' +\ell_j^\dagger(V^\dagger U')r_j\right]dt \quad\pmod{2\pi}.
\end{equation}
This phase is independent of the periodic biorthogonal gauge. If all \(q=\dim E\) eigenvalue branches satisfy the same hypothesis, then
\begin{equation}
\label{eq:sum_bandwise_cluster_zak_phase}
\sum_{j=1}^q\gamma_j^{\mathrm{BFZ}} \equiv\gamma_E^{\mathrm{BFZ}}\pmod{2\pi}.
\end{equation}
\end{corollary}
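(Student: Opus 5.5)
The plan has three parts: construct the eigenvectors by Riesz projections, reduce the Zak integrand to $E$ using the biorthogonal frames, and sum over bands to get the cluster phase.

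\emph{Existence of the eigenvectors.} Because $\nu_j$ stays uniformly separated from the rest of $\sigma(M(t))$, I would choose, for each $t$, a small circle around $\nu_j(t)$ that excludes the other eigenvalues. By continuity this circle can be held fixed on short $t$-intervals. The Riesz projection $P_j(t)$ of $M(t)$ onto $\nu_j$ is then a rank-one $C^1$ periodic projection, and $P_j(t)^\dagger$ is the corresponding projection for $M(t)^\dagger$.

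The rank-one line bundle over $S^1$ is trivial. The quickest construction is as follows. Take any $C^1$ frame $\tilde r$ of $\operatorname{Ran}P_j$ on $[0,2\pi]$, for example by parallel transport $\tilde r'=P_j'P_j\tilde r$. Then $\tilde r(2\pi)=c\,\tilde r(0)$ with $c\neq0$, and multiplying by $e^{-t\log c/(2\pi)}$ makes the frame periodic. Call the result $r_j$. Next take $\tilde\ell$ spanning $\operatorname{Ran}P_j^\dagger$. Since $P_j=r_j\tilde\ell^\dagger/(\tilde\ell^\dagger r_j)$, the pairing $\tilde\ell^\dagger r_j$ is nonzero, so $\ell_j:=\tilde\ell/\overline{(\tilde\ell^\dagger r_j)}$ is a well-defined periodic $C^1$ vector with $\ell_j^\dagger r_j=1$.

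\emph{The reduction formula.} Set $u_j^R=Ur_j$ and $u_j^L=V\ell_j$. By Proposition~\ref{prop:finite_cluster_reduction_nearby_loops}, specifically the intertwining relations in its proof, these are exact right and left eigenvectors. They are biorthonormal because $V^\dagger U=I_E$. Differentiating $u_j^R$ gives
\[
(u_j^L)^\dagger\partial_tu_j^R=\ell_j^\dagger V^\dagger(U'r_j+Ur_j')=\ell_j^\dagger r_j'+\ell_j^\dagger(V^\dagger U')r_j,
\]
which is \eqref{eq:bandwise_bfz_zak_phase} after integrating.

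For gauge independence, note that any other periodic biorthogonal choice has the form $r_j\mapsto g r_j$ and $\ell_j\mapsto\bar g^{-1}\ell_j$ with $g:S^1\to\mathbb C^\times$. Under this change the integrand shifts by $g'/g$. Its integral is $2\pi i\,\operatorname{Wind}(g,0)$, so the phase $-\operatorname{Im}$ of the integral changes by a multiple of $2\pi$.

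\emph{The sum rule.} This step uses a frame change. When all $q$ branches are isolated, form $\mathcal G(t):=[r_1,\dots,r_q]$. Eigenvectors for distinct eigenvalues are independent, so $\mathcal G(t)\in\mathrm{GL}(E)$. Biorthogonality gives $(\mathcal G^{-1})^\dagger=[\ell_1,\dots,\ell_q]$, since $\ell_i^\dagger r_j=\delta_{ij}$; for $i\ne j$ this holds automatically because $\nu_i\neq\nu_j$.

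Now apply Proposition~\ref{prop:determinant_line_bfz_cluster_phase} to the frames $U\mathcal G$ and $V(\mathcal G^{-1})^\dagger$. The cluster phase changes only by a multiple of $2\pi$. The trace of the new connection is $\operatorname{tr}(\mathcal G^{-1}V^\dagger U'\mathcal G+\mathcal G^{-1}\mathcal G')$. Using the rows $\ell_i^\dagger$ of $\mathcal G^{-1}$, this trace equals $\sum_j\bigl[\ell_j^\dagger(V^\dagger U')r_j+\ell_j^\dagger r_j'\bigr]$. Taking $-\operatorname{Im}\int$ of both sides yields \eqref{eq:sum_bandwise_cluster_zak_phase}.

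\emph{Main obstacle.} The only delicate point is constructing global periodic eigenvectors with the exact normalization $\ell_j^\dagger r_j=1$, including the correct bookkeeping of the monodromy constant. Everything else is algebra with $V^\dagger U=I_E$.
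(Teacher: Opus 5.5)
Your proposal is correct and follows essentially the paper's route. It uses the same computation $\ell_j^\dagger V^\dagger(U'r_j+Ur_j')$, the same scalar-gauge shift by $g'/g$, and a sum rule that, via the frame-change invariance of Proposition~\ref{prop:determinant_line_bfz_cluster_phase} with $\mathcal G=[r_1,\dots,r_q]$, reduces to the paper's identity $\sum_j(u_j^L)^\dagger(u_j^R)'=\operatorname{tr}(V^\dagger U')+\operatorname{tr}(\mathsf X^{-1}\mathsf X')$ plus Jacobi's formula. Your parallel-transport construction with the monodromy correction simply makes explicit the existence step that the paper asserts from triviality of line bundles over $S^1$; the nonvanishing of $\tilde\ell^\dagger r_j$ is better justified by writing the rank-one idempotent as $P_j=r_j\phi^\dagger$ with $\phi^\dagger r_j=1$, rather than from a formula that already divides by it.
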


\begin{proof}
The uniformly separated right and left eigenline projections are periodic and \(C^1\). Their line bundles over \(S^1\) are trivial, while algebraic simplicity makes the left-right pairing nonzero; hence periodic eigenvectors may be chosen with \(\ell_j^\dagger r_j=1\). The Riesz intertwining identities show that \(u_j^R,u_j^L\) are exact biorthogonal boundary-density eigenvectors of the BFZ symbol, and
\[
(u_j^L)^\dagger(u_j^R)'=\ell_j^\dagger V^\dagger(U'r_j+Ur_j')=\ell_j^\dagger(V^\dagger U')r_j+\ell_j^\dagger r_j',
\]
which proves \eqref{eq:bandwise_bfz_zak_phase}. A periodic gauge \(r_j\mapsto gr_j\), \(\ell_j\mapsto\ell_j/\overline g\) adds \(g^{-1}g'\) to the connection, whose integral lies in \(2\pi i\mathbb Z\).

If all branches satisfy the hypothesis, their eigenvectors are biorthogonal. With \(\mathsf X=(r_1,\ldots,r_q)\), the left eigenvector matrix is \((\mathsf X^{-1})^\dagger\), and hence
\[
\sum_{j=1}^q(u_j^L)^\dagger(u_j^R)' =\operatorname{tr}(V^\dagger U')+\operatorname{tr}(\mathsf X^{-1}\mathsf X').
\]
The last term integrates to \(2\pi i\operatorname{Wind}(\det\mathsf X,0)\), proving \eqref{eq:sum_bandwise_cluster_zak_phase}.
\end{proof}

Point-gap winding records eigenvalue motion, whereas \eqref{eq:bandwise_bfz_zak_phase} records eigenvector transport. For example, \(M(t)=\operatorname{diag}(z_\ast+e^{it},z_\ast+3)\) has \(W(M;z_\ast)=1\) but zero band phases. More generally, when every branch is periodic,
\[
W(M;z_\ast)=\sum_{j=1}^q\operatorname{Wind}(\nu_j-z_\ast,0),
\]
so the \(C^0\) comparison used to transfer winding does not transfer a band phase. That would require a uniform band gap and a \(C^1\) comparison, as well as a periodic \(C^1\) reconstruction if a physical-field phase is sought.

The final theorem produces a quantized phase from a point gap without assuming simple bands, diagonalizability, or global eigenvector labels. It combines the unitary polar factor with the standard chiral Hermitianization used for point-gapped non-Hermitian operators in \cite{gong_ashida_kawabata_et_al2018,kawabata_shiozaki_ueda_sato2019}; the Zak-phase identity follows from an explicit frame of its negative spectral subspace. For earlier Berry-like and spectral windings around exceptional points, see \cite{leykam_bliokh_huang_chong_nori2017}.

\begin{theorem}[Point-gap winding and Zak phase of the chiral Hermitianization]
\label{thm:point_gap_winding_as_zak_phase}
Let \(E\) be a finite-dimensional complex Hilbert space and let \(M:[0,2\pi]\to\mathcal L(E)\) be a \(C^1\) periodic loop having a point gap at \(z_\ast\). Set
\begin{equation}
\label{eq:point_gap_polar_unitary}
A_{z_\ast}(t):=M(t)-z_\ast I_E, \qquad \mathcal Q_{z_\ast}(t):=A_{z_\ast}(t) \bigl(A_{z_\ast}(t)^\dagger A_{z_\ast}(t)\bigr)^{-1/2}.
\end{equation}
Then \(\mathcal Q_{z_\ast}\) is the unitary polar factor of \(A_{z_\ast}\) and is a \(C^1\) periodic loop. The chiral Hermitianization, also called the doubled Hermitian family,
\begin{equation}
\label{eq:point_gap_hermitianization}
\mathbb H_{z_\ast}(t):=
\begin{pmatrix}
0&A_{z_\ast}(t)\\
A_{z_\ast}(t)^\dagger&0
\end{pmatrix}
\end{equation}
has a uniform spectral gap about zero, and its family of negative spectral subspaces has the periodic orthonormal frame
\begin{equation}
\label{eq:negative_chiral_frame}
\Psi_-(t):=\frac1{\sqrt2}
\begin{pmatrix}
-\mathcal Q_{z_\ast}(t)\\ I_E
\end{pmatrix}.
\end{equation}
The winding of the polar unitary is exactly the determinant point-gap winding:
\begin{equation}
\label{eq:point_gap_chiral_winding}
\frac1{2\pi i}\int_0^{2\pi}\operatorname{tr}_E\bigl( \mathcal Q_{z_\ast}(t)^\dagger\partial_t\mathcal Q_{z_\ast}(t)\bigr)\,dt =W(M;z_\ast).
\end{equation}
The Zak phase of the negative spectral subspace satisfies
\begin{equation}
\label{eq:point_gap_zak_winding_identity}
\gamma_{\mathrm{Herm}}(z_\ast) :=i\int_0^{2\pi}\operatorname{tr}_E\bigl(\Psi_-(t)^\dagger\partial_t\Psi_-(t)\bigr)\,dt =-\pi W(M;z_\ast) \equiv\pi W(M;z_\ast)\pmod{2\pi}.
\end{equation}
\end{theorem}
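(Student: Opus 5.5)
The plan is to establish the four assertions in order: regularity of the polar factor, the spectral gap and negative frame of \(\mathbb H_{z_\ast}\), the winding identity \eqref{eq:point_gap_chiral_winding}, and finally the Zak phase identity \eqref{eq:point_gap_zak_winding_identity}.

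\emph{Polar factor.} The point gap gives \(\mu_0:=\inf_t\sigma_{\min}(A_{z_\ast}(t))>0\), as recorded after Definition~\ref{def:determinant_winding}. Hence \(P(t):=A_{z_\ast}(t)^\dagger A_{z_\ast}(t)\ge\mu_0^2I_E\) is a \(C^1\) periodic family of positive definite operators. The inverse square root is given by the holomorphic functional calculus
\[
P^{-1/2}=\frac{1}{2\pi i}\int_{\mathscr C}\zeta^{-1/2}(\zeta I-P)^{-1}\,d\zeta,
\]
with \(\mathscr C\) a contour in the right half plane enclosing \([\mu_0^2,\sup_t\|P(t)\|]\). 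This representation shows that \(P^{-1/2}\) is \(C^1\) and periodic. Unitarity follows from
\[
\mathcal Q^\dagger\mathcal Q=P^{-1/2}A^\dagger AP^{-1/2}=I,
\]
and in finite dimensions a left inverse is two-sided. By uniqueness of the polar decomposition, \(\mathcal Q\) is the unitary polar factor.

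\emph{Spectral gap and negative frame.} Since
\[
\mathbb H_{z_\ast}^2=\operatorname{diag}(AA^\dagger,A^\dagger A)\ge\mu_0^2,
\]
the spectrum of \(\mathbb H_{z_\ast}\) avoids \((-\mu_0,\mu_0)\) uniformly in \(t\). To verify \(\Psi_-\), use \(A=\mathcal Q|A|\) with \(|A|:=P^{1/2}\), so that \(A^\dagger\mathcal Q=|A|\). A direct computation then gives
\[
\mathbb H\binom{-\mathcal Q}{I}=\binom{A}{-A^\dagger\mathcal Q}=\binom{\mathcal Q|A|}{-|A|}=\binom{-\mathcal Q}{I}\bigl(-|A|\bigr).
\]
Thus \(\operatorname{Ran}\Psi_-\) is invariant, and \(\mathbb H\) acts on it as \(-|A|\le-\mu_0\). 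Orthonormality of the columns follows from \(\tfrac12(\mathcal Q^\dagger\mathcal Q+I)=I\). The range has dimension \(q\), which is exactly the number of negative eigenvalues, because the spectrum of \(\mathbb H\) is \(\pm\) the singular values of \(A\). The range is therefore the full negative spectral subspace.

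\emph{Winding identity.} This is the main step. Write \(\det A=\det\mathcal Q\cdot\det|A|\). Since \(\det|A|>0\) is a \(C^1\) periodic positive function, its winding number is zero, so \(W(M;z_\ast)=\operatorname{Wind}(\det\mathcal Q,0)\). Jacobi's formula, as in Theorem~\ref{thm:finite_dimensional_winding_stability}, gives
\[
(\det\mathcal Q)'/\det\mathcal Q=\operatorname{tr}(\mathcal Q^{-1}\mathcal Q')=\operatorname{tr}(\mathcal Q^\dagger\mathcal Q'),
\]
and integration yields \eqref{eq:point_gap_chiral_winding}. Alternatively, one can homotope \(A_s:=A|A|^{-s}\) for \(s\in[0,1]\); these are invertible throughout, so Theorem~\ref{thm:finite_dimensional_winding_stability} again gives equal windings.

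\emph{Zak phase.} From the frame,
\[
\Psi_-^\dagger\Psi_-'=\tfrac12\,\mathcal Q^\dagger\mathcal Q',
\]
since the lower block is constant. Therefore
\[
\gamma_{\mathrm{Herm}}=\tfrac{i}{2}\int_0^{2\pi}\operatorname{tr}(\mathcal Q^\dagger\mathcal Q')\,dt=\tfrac i2\cdot2\pi i\,W=-\pi W.
\]
Finally, \(-\pi W\equiv\pi W\pmod{2\pi}\) because \(2\pi W\in2\pi\mathbb Z\). One should also note that \(\operatorname{tr}(\mathcal Q^\dagger\mathcal Q')\) is purely imaginary, since \(\mathcal Q^\dagger\mathcal Q'\) is skew-Hermitian; this confirms that \(\gamma_{\mathrm{Herm}}\) is real. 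Any other periodic frame of the same subspace changes the phase by a multiple of \(2\pi\), by the argument of Proposition~\ref{prop:determinant_line_bfz_cluster_phase}, so the value modulo \(2\pi\) is intrinsic to the negative spectral subspace.
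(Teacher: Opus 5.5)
Your proposal is correct and follows essentially the same route as the paper's proof: the point-gap bound makes $A^\dagger A$ uniformly positive, so the polar factor is $C^1$; the identity $\mathbb H_{z_\ast}\Psi_-=\Psi_-(-|A_{z_\ast}|)$ together with $\mathbb H_{z_\ast}^2=\operatorname{diag}(AA^\dagger,A^\dagger A)$ identifies the negative spectral subspace; and $\det\mathcal Q_{z_\ast}=\det A_{z_\ast}/|\det A_{z_\ast}|$ with Jacobi's formula gives both identities. Your Cauchy-integral argument for the regularity of $P^{-1/2}$ and your explicit dimension count for the negative subspace simply spell out steps the paper states briefly.
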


\begin{proof}
Continuity on the compact parameter interval and the point-gap condition give
\[
m_\ast:=\min_{0\le t\le2\pi} \sigma_{\min}(A_{z_\ast}(t))>0.
\]
Thus \(A_{z_\ast}^\dagger A_{z_\ast}\ge m_\ast^2I_E\). Functional calculus on the positive-definite cone shows that \((A_{z_\ast}^\dagger A_{z_\ast})^{-1/2}\), and hence \(\mathcal Q_{z_\ast}\), is \(C^1\) and periodic. A direct calculation gives
\[
\mathcal Q_{z_\ast}^\dagger\mathcal Q_{z_\ast} =\mathcal Q_{z_\ast}\mathcal Q_{z_\ast}^\dagger=I_E.
\]

Let \(|A_{z_\ast}|:=(A_{z_\ast}^\dagger A_{z_\ast})^{1/2}\). The polar decomposition gives \(A_{z_\ast}=\mathcal Q_{z_\ast}|A_{z_\ast}|\) and \(A_{z_\ast}^\dagger\mathcal Q_{z_\ast}=|A_{z_\ast}|\). Therefore
\[
\mathbb H_{z_\ast}\Psi_- =\frac1{\sqrt2}
\begin{pmatrix}
A_{z_\ast}\\-A_{z_\ast}^\dagger\mathcal Q_{z_\ast}
\end{pmatrix}
=\Psi_-(-|A_{z_\ast}|),
\qquad
\Psi_-^\dagger\Psi_-=I_E.
\]
The square of \(\mathbb H_{z_\ast}\) is block diagonal with blocks \(A_{z_\ast}A_{z_\ast}^\dagger\) and \(A_{z_\ast}^\dagger A_{z_\ast}\). Hence, its spectrum lies in \((-\infty,-m_\ast]\cup[m_\ast,\infty)\), and the \(\dim E\) columns of \(\Psi_-\) span its negative spectral subspace. Differentiating the frame gives
\[
\Psi_-^\dagger\partial_t\Psi_- =\frac12\mathcal Q_{z_\ast}^\dagger\partial_t\mathcal Q_{z_\ast}.
\]

Finally,
\[
\det\mathcal Q_{z_\ast} =\det A_{z_\ast}\, \det(A_{z_\ast}^\dagger A_{z_\ast})^{-1/2} =\frac{\det A_{z_\ast}}{|\det A_{z_\ast}|}.
\]
Thus, \(\det\mathcal Q_{z_\ast}\) and \(\det A_{z_\ast}\) have the same winding. Since \(\mathcal Q_{z_\ast}^{-1}=\mathcal Q_{z_\ast}^\dagger\), Jacobi's formula yields
\[
\frac{(\det\mathcal Q_{z_\ast})'}{\det\mathcal Q_{z_\ast}} =\operatorname{tr}_E(\mathcal Q_{z_\ast}^\dagger\mathcal Q_{z_\ast}').
\]
Integrating proves \eqref{eq:point_gap_chiral_winding}. Substitution into the frame connection gives
\[
i\int_0^{2\pi}\operatorname{tr}_E(\Psi_-^\dagger\Psi_-')\,dt =\frac{i}{2}(2\pi i)W(M;z_\ast) =-\pi W(M;z_\ast),
\]
which proves \eqref{eq:point_gap_zak_winding_identity}.
\end{proof}

The theorem applies to every \(C^1\) Riesz-reduced loop considered above whenever its stated point-gap condition holds. Applied to the exact reduced loop
\(\widehat{\mathsf C}_\beta^\rho\) in Corollary~\ref{cor:general_capacitance_riesz_winding}, it gives
\[
\gamma_{\mathrm{Herm}}(z_{+,\mathrm{self}}^\rho)\equiv\pi, \qquad \gamma_{\mathrm{Herm}}(z_{-,\mathrm{self}}^\rho)\equiv(M-1)\pi\pmod{2\pi}.
\]
On the sum-zero sector, \(\mathcal Q_{z_{-,\mathrm{self}}^\rho}\) is the normalized scalar loop associated with \(\widehat\mu_-^\rho-z_{-,\mathrm{self}}^\rho\), whose winding is \(-1\). After choosing a parameter-independent basis of this sector, the Hermitianized family is the direct sum of \(M-1\) identical rank-one families. Each rank-one summand has Zak phase \(\pi\), while the phase of the full sum-zero sector is \((M-1)\pi\); for \(M=3\), the latter phase vanishes modulo \(2\pi\) although each rank-one summand has nontrivial phase.

In summary, \(\gamma_E^{\mathrm{BFZ}}\) is defined for a \(C^1\) invariant spectral subspace selected by a Riesz projection around an isolated spectral cluster, \(\gamma_j^{\mathrm{BFZ}}\) requires a globally isolated periodic simple band, and \(\gamma_{\mathrm{Herm}}\) is defined for every \(C^1\) point-gapped reduced loop. Only the phase of the chiral Hermitianization is fixed by the point-gap winding. The finite calculations below are diagnostics for the predicted scale behavior; identifying them with the exact Riesz-reduced winding would require a separate convergence analysis.

\section{Numerical experiments}
\label{sec:num}
The following experiments are intended as numerical illustrations of the theoretical result. We will discretize the BFZ boundary symbol directly and compute the winding of the resulting nodal matrices. Figure~\ref{fig:projected_simplex_prefractals} gives schematic displays of the smooth prefractal geometries for \(M=3\) and \(M=4\).

\begin{figure}[H]
\centering
\includegraphics[width=0.6\linewidth]{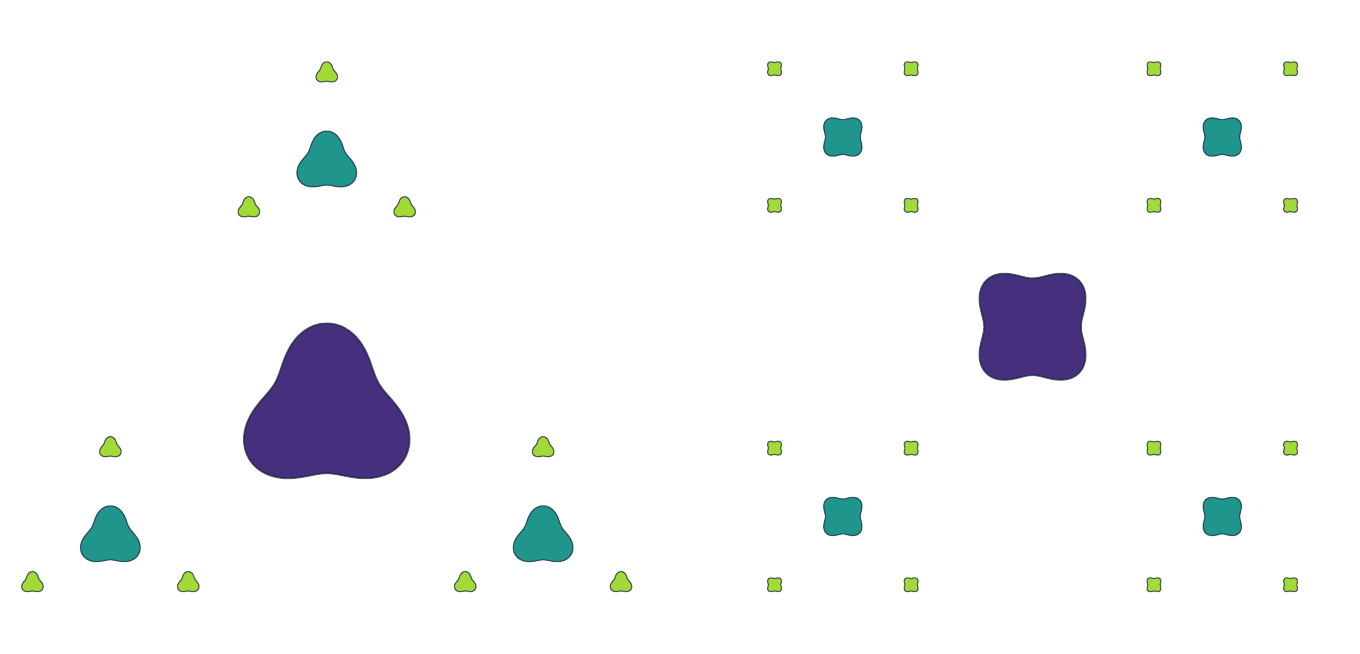}
\caption{Schematic display for prefractals of depth $2$ with contraction \(0.36\). The left and right plots contain \(1+3+3^2\) rounded-triangular and \(1+4+4^2\) rounded-square cells, respectively.}
\label{fig:projected_simplex_prefractals}
\end{figure}

\paragraph{Direct boundary-symbol winding.}
For \(M=3\) and \(4\), take the equilateral centers and tetrahedral centers as
\begin{align*}
&p_j^{(3)}  = \left(\cos\left(\frac{\pi}{2}+\frac{2\pi j}{3}\right),\sin\left(\frac{\pi}{2}+\frac{2\pi j}{3}\right),0\right), \quad j = 0,1,2\\
&\{p_j^{(4)}\}_{j=1}^4=\left\{\frac{\varepsilon}{\sqrt3}:\varepsilon\in\{\pm1\}^3,\ \varepsilon_1\varepsilon_2\varepsilon_3=1\right\}.
\end{align*}
Define
\[
h_3(\omega):=\operatorname{Re}\left(e^{-3\pi i/2}(\omega_1+i\omega_2)^3\right),\qquad h_4(\omega):=\frac52\left(\omega_1^4+\omega_2^4+\omega_3^4-\frac35\right),
\]
and
\[
\Sigma_M^\rho:=\left\{\rho\bigl(1+\varepsilon_Mh_M(\omega)\bigr)\omega:\omega\in\mathbb S^2\right\},\qquad \varepsilon_3=0.20,\quad \varepsilon_4=0.15.
\]
The scatterers \(p_j^{(M)}+\Sigma_M^\rho\) retain the corresponding \(S_3\)- and \(S_4\)-symmetries discussed in Section~\ref{subsec:scalar_scale_winding_reference}.

Using one collocation point \(x_a\) and one constant density unknown on each curved parameter panel \(P_b\), we assemble the BFZ blocks defined in \eqref{eq:Bn_def_weighted_sec} directly:
\[
(B_{0,h}^k)_{ab}:=\frac12\delta_{ab}  +\int_{P_b}\mathsf K_\eta^k(x_a,y)\,d\sigma(y),
\qquad
(B_{n,h}^k)_{ab}:=\lambda^n \int_{P_b}\mathsf K_\eta^k(x_a,\lambda^ny)\,d\sigma(y), \quad n\ne0.
\]
The same-panel singular integrals are evaluated by a four-triangle Duffy decomposition of order \(40\), while the remaining integrals use tensor Gaussian quadrature of order \(32\) with parameter meshes of \(100\) panels per component.

We take
\[
k=0.8,\qquad \rho=0.003,\qquad \eta=1,\qquad \lambda=64,\qquad \beta=-\frac13,
\]
and compute the symmetric Laurent truncation
\[
A_h^{(M)}(t):= \sum_{n=-5}^{5}e^{-int}\lambda^{\beta n}B_{n,h}^k, \qquad 0\le t\le2\pi,
\]
at \(Q=512\) equally spaced values \(t_q=2\pi q/Q\). The one-sided estimates in Proposition~\ref{prop:one_sided_Bn_bounds} imply norm convergence of the Laurent series. For the chosen truncation, the omitted Laurent tails have the matrix $2$-norm bounded by \(9.59\times10^{-7}\) for \(M=3\) and \(1.26\times10^{-6}\) for \(M=4\).

The same-scale cluster of \(B_{0,h}^k\) splits into the symmetric sector and the standard sector. We denote the corresponding eigenvalues by \(z_{+,h}\) and \(z_{-,h}\), respectively. Thus,
\[
 B_{0,h}^k\big|_{E_{+,h}}=z_{+,h}I,  \qquad B_{0,h}^k\big|_{E_{-,h}}=z_{-,h}I.
\]
After computing $z_{+,h}$ and $z_{-,h}$ numerically, for every sampled value $t_q$, the contour $\mathscr C_0:=\{\zeta\in\mathbb C:|\zeta|=1/6\}$ encloses exactly $M$ eigenvalues $\xi_{1,h}(t_q),\ldots,\xi_{M,h}(t_q)$, counted with algebraic multiplicity. We define the two cluster determinants by
\[
D_{\pm,h}(t_q):= \prod_{r=1}^{M}\bigl(\xi_{r,h}(t_q)-z_{\pm,h}\bigr)
\]
and compute their numerical windings from their unwrapped phases along the closed sampled path, with $t_Q=2\pi$ identified with $t_0=0$. The corresponding sampled full-matrix point gaps are
\[
g_{\pm,h}^{\mathrm{samp}}:=\min_{0\le q<Q}\sigma_{\min}\!\left(A_h^{(M)}(t_q)-z_{\pm,h}I\right).
\]

\begin{table}[H]
\centering
\footnotesize
\caption{Numerical cluster windings and the resulting levelwise local data. The first two entries are the windings computed from $D_{+,h}$ and $D_{-,h}$; the remaining two are obtained from the standard-sector winding using the multiplicity factors $M$ and $M^2$. The reported sampled gaps were computed using $100$ panels per component.}
\label{tab:numerical_full_boundary_winding}
\setlength{\tabcolsep}{4pt}
\begin{tabular}{lccc}
\toprule
 \(M\) & Local winding data & $g_{+,h}^{\mathrm{samp}}$ & $g_{-,h}^{\mathrm{samp}}$ \\ \midrule
 3 & \((-1,-2,-6,-18)\) & \(3.217\times10^{-6}\) & \(2.057\times10^{-6}\) \\
 4 & \((-1,-3,-12,-48)\) & \(4.754\times10^{-6}\) & \(1.880\times10^{-6}\) \\
\bottomrule
\end{tabular}
\end{table}

\paragraph{Full impedance-scattering scale cycle.}
The second experiment examines the phase accumulated by a shifted determinant of the normalized physical far-field response as the wavenumber varies from $k_0$ to $3k_0$. The far-field response factors through the inverse of the physical boundary operator and is therefore not a numerical approximation of the BFZ symbol or its Riesz reduction. For spherical inclusions, we test whether this accumulated phase approaches $-2\pi$ as the depth increases; for both geometries, we compare the endpoint phase defects defined below.

Here \(N\) denotes the depth of the finest prefractal generation. We use the equilateral three-map construction with contraction \(1/3\) and place \(3^N\) inclusions of reference radius \(r_N:=0.05\,3^{-N}\) at its depth-\(N\) centers. We consider spherical inclusions and the threefold-symmetric rounded inclusions
\[
\Sigma_{\triangle}^{r_N}:= \left\{r_N\left(1+0.18\operatorname{Re}\bigl((\omega_1+i\omega_2)^3\bigr)\right)(\omega_1,\omega_2,0.70\omega_3):\omega\in\mathbb S^2\right\}.
\]
Denote the corresponding union of inclusions by \(D_N\). For each plane-wave incidence, we solve
\[
(\Delta+k^2)u^s=0 \quad\text{in }\mathbb R^3\setminus\overline{D_N},
\qquad
\partial_\nu(u^i+u^s)+ik(u^i+u^s)=0 \quad\text{on }\partial D_N,
\]
with the Sommerfeld radiation condition. Writing \(u^s=S_{D_N}^k[\varphi_N]\), the density satisfies
\[
\left(-\frac12I+(K_{D_N}^k)^*+ikS_{D_N}^k\right)\varphi_N=-(\partial_\nu u^i+iku^i).
\]
We use the six coplanar incident and observation directions
\[
\mathbf d_j:=\left(\cos\frac{2\pi j}{6},\sin\frac{2\pi j}{6},0\right), \qquad \widehat x_j:=\left(\cos\left(\frac{2\pi j}{6}+\frac{\pi}{6}\right),  \sin\left(\frac{2\pi j}{6}+\frac{\pi}{6}\right),0\right), \qquad j=0,\ldots,5.
\]
For the $j$th incidence, let \(u_j^i(x):=e^{ik\mathbf d_j\cdot x}\), and define  \(\bigl(\mathsf F_N(k)\bigr)_{\ell j}:=u_N^\infty(\widehat x_\ell;\mathbf d_j,k)\), for \(0\le \ell,j\le5\), where $u_N^\infty$ denotes the corresponding far-field pattern.

To specify the normalization, let \(\mathsf F_{\mathrm{iso},N}(k)\) be the response matrix of one isolated inclusion of the same shape and size as the components of $D_N$, and set \(a_N(k):=\frac1{6}\sum_{j=0}^{5}\bigl(\mathsf F_{\mathrm{iso}, N}(k)\bigr)_{jj}.\) We define
\[
\widehat{\mathsf F}_N(k):= \frac{\mathsf F_N(k)}{3^N a_N(k)}.
\]
This normalization is fixed throughout the depth study and is intended to factor out the leading extensive contribution and the dominant single-inclusion size-frequency dependence. With
\[
k(s)=k_0\,3^s, \qquad k_0=2.14879, \qquad z_\ast=-0.16+0.55i, \qquad 0\le s\le1,
\]
define
\[
d_N(s):=\det\left(\widehat{\mathsf F}_N(k(s))-z_\ast I_6\right),
\qquad
\nu_N:=\frac{\theta_N(1)-\theta_N(0)}{2\pi},
\]
where \(\theta_N\) is an unwrapped phase of \(d_N\) along the sampled path. Thus, \(d_N\) is the shifted determinant whose phase increment is measured. For visualization only, set
\[
q_N(s):=\frac{d_N(s)}{d_N(0)},
\qquad
\rho_N:=|q_N(1)|,
\qquad
\Gamma_N(s):=\rho_N^{-s}q_N(s).
\]
Since \(\rho_N^s\) interpolates geometrically between the endpoint moduli, this normalization subtracts the linear endpoint trend from \(\log|q_N(s)|\) without changing the phase. Consequently,
\[
\Gamma_N(0)=1,
\qquad
\Gamma_N(1)=e^{2\pi i\nu_N},
\qquad
\delta_N:=|\Gamma_N(1)-1|,
\]
so \(\delta_N\) measures phase closure alone.

Each inclusion is discretized by $80$ triangular panels with a piecewise-constant density, and all displayed values use $32$ equal subintervals of $s$. For spherical inclusions, \(\nu_3=-1.004268\), \(\nu_4 = -1.000357\), \(\nu_5 = -0.999989\), and the corresponding endpoint phase defects are
\[
\delta_3=2.68\times10^{-2},
\qquad
\delta_4=2.24\times10^{-3},
\qquad
\delta_5=6.66\times10^{-5}.
\]
Figure~\ref{fig:full_impedance_scale_cycle} compares \(\delta_N\) for spherical and rounded triangular inclusions over \(2\le N\le5\). At $N=5$, the respective defects are $6.66\times10^{-5}$ and $2.40\times10^{-3}$. The corresponding sampled point gaps, defined analogously to those in the preceding experiment, are $6.643\times10^{-2}$ and $6.653\times10^{-2}$, respectively.

\begin{figure}[H]
\centering
\begin{subfigure}[t]{0.49\linewidth}
\centering
\includegraphics[width=\linewidth]{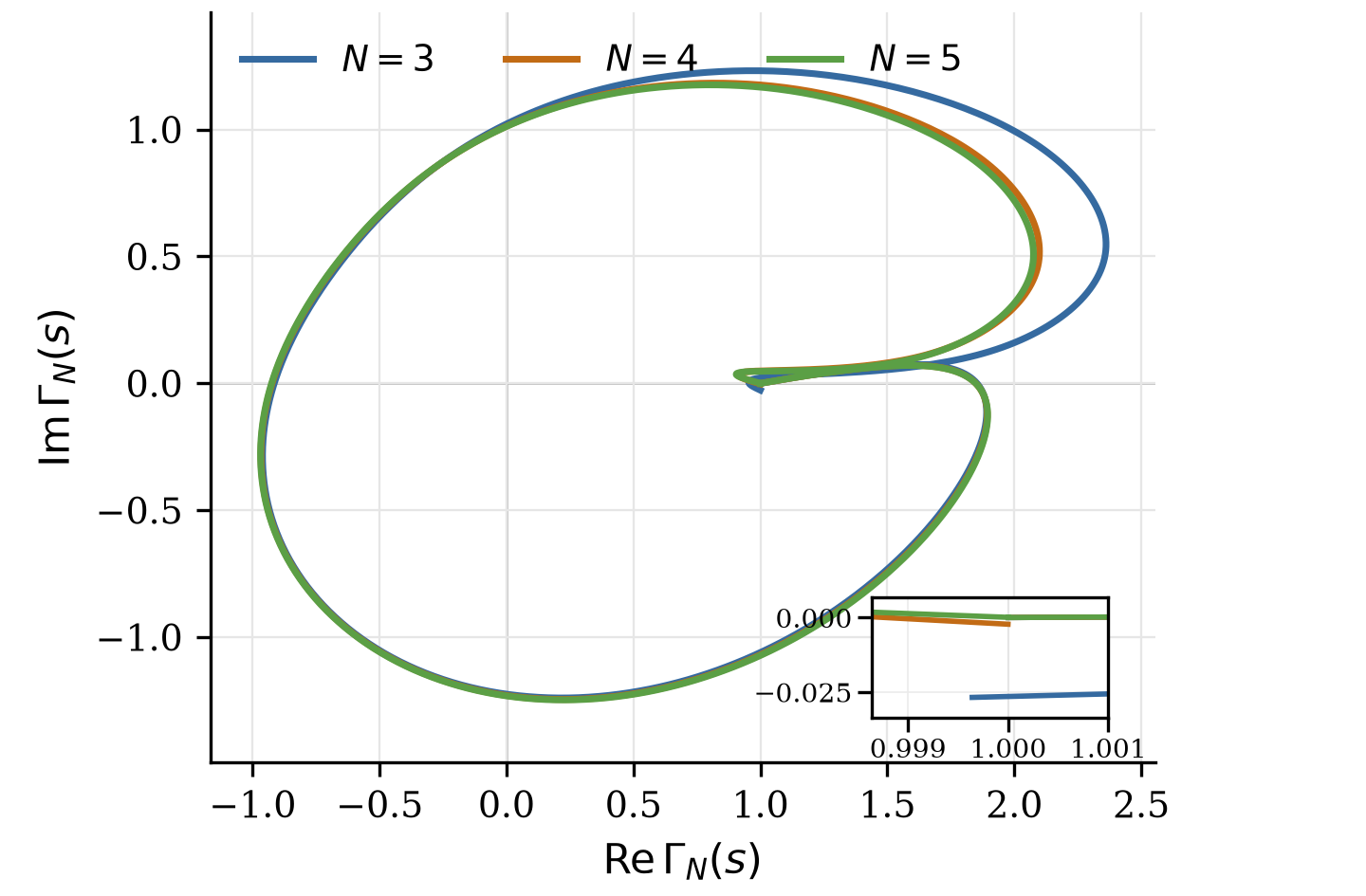}
\label{fig:full_impedance_sphere_path}
\end{subfigure}
\hfill
\begin{subfigure}[t]{0.49\linewidth}
\centering
\includegraphics[width=\linewidth]{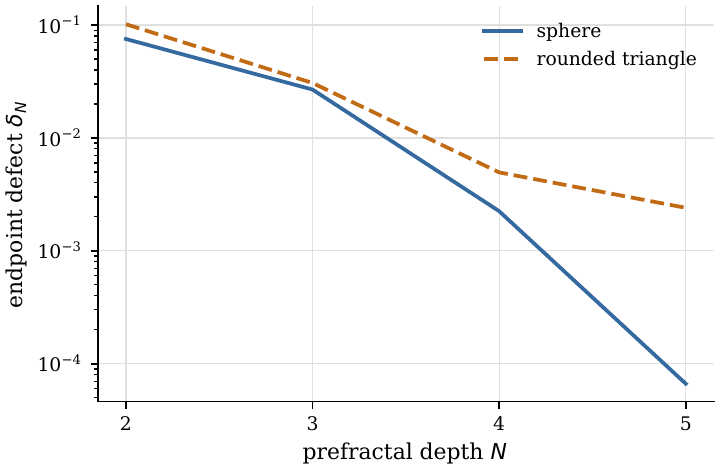}
\label{fig:full_impedance_depth_defect}
\end{subfigure}
\caption{Finite-depth illustration of phase closure. Left: the rescaled paths \(\Gamma_N\) for spherical inclusions at \(N=3,4,5\); the inset shows their endpoints near \(1\). Right: the endpoint defect \(\delta_N=|\Gamma_N(1)-1|\) for spherical and rounded triangular inclusions.}
\label{fig:full_impedance_scale_cycle}
\end{figure}
Since the finite-depth paths are not exactly closed, the quantities $\nu_N$ are phase increments measured in turns rather than integer windings. For spherical inclusions, their observed approach to $-1$ means that the accumulated phase approaches $-2\pi$. This physical diagnostic is not identified with the exact Riesz-reduced BFZ winding.
 
\section{Concluding remarks} \label{sec:conclusion}
In this paper, we have introduced a scale-periodic boundary-integral model motivated by exact dilation identities for Helmholtz scattering from a penetrable scatterer and carried out its topological analysis. After all scale layers are normalized to a common reference boundary, the interscale interaction becomes a block Laurent operator. Its one-sided coefficient estimates determine a natural exponentially weighted scale space, and the Bloch-Floquet-Zak transform fiberizes the resulting operator into a continuously differentiable loop over the dual scale circle. 

For reference cells composed of sufficiently small, well-separated components, the static equilibrium densities and their Calderón-dual constant functionals provide a computable finite-dimensional projected matrix. A two-stage Riesz construction identifies the corresponding invariant spectral cluster of the full BFZ boundary symbol. After recentering at the appropriate nonzero-wavenumber same-scale values, the projected family and the exact Riesz reduction differ by an error smaller than the relevant point gap. Their determinant windings therefore agree.

In regular-simplex configurations, the reduced space splits into its symmetric and standard sectors. Under the stated nonvanishing and scale-separation assumptions, both sector loops have winding number \(-1\). The full reduced determinant consequently has nontrivial windings about the symmetric-sector reference point and about the standard-sector reference point. These point-gap windings should be distinguished from biorthogonal eigenvector phases: they do not determine a bandwise Zak phase without a globally isolated simple band. They do, however, determine the Zak phase of the negative spectral subspace of the associated chiral Hermitianization.

\section*{Acknowledgments}
The authors thank Jade Leathrum for helpful discussions. 


\paragraph{Funding.}
Yat Tin Chow was supported by the National Science Foundation under Grant No.~DMS-2409903 and by the Office of Naval Research under Award No.~N00014-24-1-2661.




\paragraph{Use of generative AI.}
During the preparation of this manuscript, the authors used ChatGPT (OpenAI) for language editing and assistance with the development of code for the numerical experiments. All AI-assisted text, code, and numerical outputs were independently reviewed and verified by the authors. The authors take full responsibility for the content of the article.


\bibliographystyle{plain}
\bibliography{ref.bib}

@article{chen2023enhanced,
  title={Enhanced wave localization in multifractal scattering media},
  author={Chen, Yuyao and Sgrignuoli, Fabrizio and Zhu, Yilin and Shubitidze, Tornike and Dal Negro, Luca},
  journal={Physical Review B},
  volume={107},
  number={5},
  pages={054201},
  year={2023},
  publisher={APS}
}

@article{guerin1996scattering,
  title={Scattering on fractal measures},
  author={Guerin, Charles-Antoine and Holschneider, Matthias},
  journal={Journal of Physics A: Mathematical and General},
  volume={29},
  number={23},
  pages={7651--7667},
  year={1996}
}

@article{fractal-photonic2,
author={Zhang, Xiujuan and Zangeneh-Nejad, Farzad and Chen, Ze-Guo and  Lu, Ming-Hui
and Christensen, Johan}, 
year={2023}, 
title={A second wave of topological phenomena in photonics and acoustics},
journal={Nature},
volume={618}, 
pages={687--697},
}

@article{fractal-photonic1,
author = {Tobias Biesenthal  and Lukas J. Maczewsky  and Zhaoju Yang  and Mark Kremer  and Mordechai Segev  and Alexander Szameit  and Matthias Heinrich },
title = {Fractal photonic topological insulators},
journal = {Science},
volume = {376},
number = {6597},
pages = {1114--1119},
year = {2022},
}

@book{falconer2014fractal,
  author    = {Falconer, Kenneth},
  title     = {Fractal Geometry: Mathematical Foundations and Applications},
  edition   = {3rd},
  publisher = {John Wiley \& Sons},
  address   = {Chichester},
  year      = {2014},
  isbn      = {9781119942399}
}

@book{cbms,
  author    = {Ammari, Habib and Davies, Bryn and Hiltunen, Erik Orvehed},
  title     = {Mathematical Theories for Metamaterials: {From} Condensed Matter Theory to Subwavelength Physics},
  series    = {{NSF-CBMS} Regional Conference Series in the Mathematical Sciences},
  publisher = {American Mathematical Society},
  year     = {2026},
  volume  = {136},
}

@book{lapidus_van_frankenhuijsen2013,
  author    = {Lapidus, Michel L. and van Frankenhuijsen, Machiel},
  title     = {Fractal Geometry, Complex Dimensions and Zeta Functions: Geometry and Spectra of Fractal Strings},
  edition   = {2nd},
  series    = {Springer Monographs in Mathematics},
  publisher = {Springer},
  address   = {New York},
  year      = {2013},
  doi       = {10.1007/978-1-4614-2176-4}
}

@article{lapidus_radunovic_zubrinic2017distance,
  author  = {Lapidus, Michel L. and Radunovi{\'c}, Goran and {\v Z}ubrini{\'c}, Darko},
  title   = {Distance and tube zeta functions of fractals and arbitrary compact sets},
  journal = {Advances in Mathematics},
  volume  = {307},
  pages   = {1215--1267},
  year    = {2017},
  doi     = {10.1016/j.aim.2016.11.034}
}

@article{lapidus_radunovic_zubrinic2018tube,
  author        = {Lapidus, Michel L. and Radunovi{\'c}, Goran and {\v Z}ubrini{\'c}, Darko},
  title         = {Fractal tube formulas for compact sets and relative fractal drums: Oscillations, complex dimensions and fractality},
  journal       = {Journal of Fractal Geometry},
  volume        = {5},
  number        = {1},
  pages         = {1--119},
  year          = {2018},
  doi           = {10.4171/JFG/57},
  eprint        = {1604.08014},
  archivePrefix = {arXiv},
  primaryClass  = {math-ph}
}

@article{lapidus_pearse2010,
  author        = {Lapidus, Michel L. and Pearse, Erin P. J.},
  title         = {Tube formulas and complex dimensions of self-similar tilings},
  journal       = {Acta Applicandae Mathematicae},
  volume        = {112},
  number        = {1},
  pages         = {91--137},
  year          = {2010},
  doi           = {10.1007/s10440-010-9562-x},
  eprint        = {math/0605527},
  archivePrefix = {arXiv}
}

@article{hoffer2025tube,
  author        = {Hoffer, Will},
  title         = {Tube formulae for generalized von {K}och fractals through scaling functional equations},
  journal       = {Journal of Fractal Geometry},
  volume        = {12},
  number        = {1/2},
  pages         = {135--174},
  year          = {2025},
  doi           = {10.4171/JFG/155},
  eprint        = {2405.04712},
  archivePrefix = {arXiv},
  primaryClass  = {math.MG}
}

@article{kigami_lapidus1993,
  author  = {Kigami, Jun and Lapidus, Michel L.},
  title   = {{W}eyl's problem for the spectral distribution of {L}aplacians on P.C.F. self-similar fractals},
  journal = {Communications in Mathematical Physics},
  volume  = {158},
  number  = {1},
  pages   = {93--125},
  year    = {1993},
  doi     = {10.1007/BF02097233}
}

@article{lapidus1991fractal,
  author  = {Lapidus, Michel L.},
  title   = {Fractal drum, inverse spectral problems for elliptic operators and a partial resolution of the {W}eyl--{B}erry conjecture},
  journal = {Transactions of the American Mathematical Society},
  volume  = {325},
  number  = {2},
  pages   = {465--529},
  year    = {1991},
  doi     = {10.2307/2001638}
}

@article{lapidus1994analysis,
  author  = {Lapidus, Michel L.},
  title   = {Analysis on fractals, {L}aplacians on self-similar sets, noncommutative geometry and spectral dimensions},
  journal = {Topological Methods in Nonlinear Analysis},
  volume  = {4},
  number  = {1},
  pages   = {137--195},
  year    = {1994}
}

@incollection{teplyaev2004spectral,
  author    = {Teplyaev, Alexander},
  title     = {Spectral zeta functions of symmetric fractals},
  booktitle = {Fractal Geometry and Stochastics III},
  series    = {Progress in Probability},
  volume    = {57},
  pages     = {245--262},
  publisher = {Birkh{\"a}user},
  address   = {Basel},
  year      = {2004}
}

@article{brossard1986can,
  author  = {Brossard, Jean and Carmona, Ren{\'e}},
  title   = {Can one hear the dimension of a fractal?},
  journal = {Communications in Mathematical Physics},
  volume  = {104},
  number  = {1},
  pages   = {103--122},
  year    = {1986},
  doi     = {10.1007/BF01210795}
}

@phdthesis{wave_fractal_Anna,
  author = {Rozanova-Pierrat, Anna},
  title  = {Wave propagation and fractal boundary problems: Mathematical analysis and applications},
  school = {Universit{\'e} Paris-Saclay},
  type   = {Habilitation {\`a} diriger des recherches},
  year   = {2020},
  url    = {https://tel.archives-ouvertes.fr/tel-03060630/}
}

@article{yang2020photonic,
  author  = {Yang, Zhaoju and Lustig, Eran and Lumer, Yaakov and Segev, Mordechai},
  title   = {Photonic {F}loquet topological insulators in a fractal lattice},
  journal = {Light: Science \& Applications},
  volume  = {9},
  number  = {1},
  pages   = {128},
  year    = {2020},
  doi     = {10.1038/s41377-020-00354-z}
}

@article{canyellas2024topological,
  author  = {Canyellas, R. and Liu, Chen and Arouca, R. and Eek, L. and Wang, Guanyong and Yin, Yin and Guan, Dandan and Li, Yaoyi and Wang, Shiyong and Zheng, Hao and Liu, Canhua and Jia, Jinfeng and Morais Smith, C.},
  title   = {Topological edge and corner states in bismuth fractal nanostructures},
  journal = {Nature Physics},
  volume  = {20},
  number  = {9},
  pages   = {1421--1428},
  year    = {2024},
  doi     = {10.1038/s41567-024-02551-8}
}

@article{ammari_davies_hiltunen_yu2020,
  author        = {Ammari, Habib and Davies, Bryn and Hiltunen, Erik Orvehed and Yu, Sanghyeon},
  title         = {Topologically protected edge modes in one-dimensional chains of subwavelength resonators},
  journal       = {Journal de Math{\'e}matiques Pures et Appliqu{\'e}es},
  volume        = {144},
  pages         = {17--49},
  year          = {2020},
  doi           = {10.1016/j.matpur.2020.08.007},
  eprint        = {1906.10688},
  archivePrefix = {arXiv},
  primaryClass  = {math.AP}
}

@article{ebenfelt_khavinson_shapiro2002,
  author  = {Ebenfelt, Peter and Khavinson, Dmitry and Shapiro, Harold S.},
  title   = {A free boundary problem related to single-layer potentials},
  journal = {Annales Academiae Scientiarum Fennicae. Mathematica},
  volume  = {27},
  number  = {1},
  pages   = {21--46},
  year    = {2002},
  url     = {https://afm.journal.fi/article/view/135023}
}

@article{ammari_davies_hiltunen2022,
  author        = {Ammari, Habib and Davies, Bryn and Hiltunen, Erik Orvehed},
  title         = {Robust edge modes in dislocated systems of subwavelength resonators},
  journal       = {Journal of the London Mathematical Society},
  volume        = {106},
  number        = {3},
  pages         = {2075--2135},
  year          = {2022},
  doi           = {10.1112/jlms.12619},
  eprint        = {2001.10455},
  archivePrefix = {arXiv},
  primaryClass  = {math.AP}
}

@article{ammari_barandun_cao_feppon2023,
  author        = {Ammari, Habib and Barandun, Silvio and Cao, Jinghao and Feppon, Florian},
  title         = {Edge modes in subwavelength resonators in one dimension},
  journal       = {Multiscale Modeling \& Simulation},
  volume        = {21},
  number        = {3},
  pages         = {964--992},
  year          = {2023},
  doi           = {10.1137/23M1549419},
  eprint        = {2301.06747},
  archivePrefix = {arXiv},
  primaryClass  = {math.AP}
}

@article{leykam_bliokh_huang_chong_nori2017,
  author  = {Leykam, Daniel and Bliokh, Konstantin Y. and Huang, Chunli and Chong, Y. D. and Nori, Franco},
  title   = {Edge Modes, Degeneracies, and Topological Numbers in {N}on-{H}ermitian Systems},
  journal = {Physical Review Letters},
  volume  = {118},
  pages   = {040401},
  year    = {2017},
  doi     = {10.1103/PhysRevLett.118.040401}
}

@article{garrison_wright1988,
  author  = {Garrison, J. C. and Wright, E. M.},
  title   = {Complex geometrical phases for dissipative systems},
  journal = {Physics Letters A},
  volume  = {128},
  number  = {3--4},
  pages   = {177--181},
  year    = {1988},
  doi     = {10.1016/0375-9601(88)90905-X}
}

@article{gong_ashida_kawabata_et_al2018,
  author  = {Gong, Zongping and Ashida, Yuto and Kawabata, Kohei and Takasan, Kazuaki and Higashikawa, Sho and Ueda, Masahito},
  title   = {Topological Phases of {Non-Hermitian} Systems},
  journal = {Physical Review X},
  volume  = {8},
  number  = {3},
  pages   = {031079},
  year    = {2018},
  doi     = {10.1103/PhysRevX.8.031079}
}

@article{kawabata_shiozaki_ueda_sato2019,
  author  = {Kawabata, Kohei and Shiozaki, Ken and Ueda, Masahito and Sato, Masatoshi},
  title   = {Symmetry and Topology in {Non-Hermitian} Physics},
  journal = {Physical Review X},
  volume  = {9},
  number  = {4},
  pages   = {041015},
  year    = {2019},
  doi     = {10.1103/PhysRevX.9.041015}
}
\end{document}